\newtheorem{theorem}{Theorem}[section]
\newtheorem{lemma}[theorem]{Lemma}
\newtheorem{proposition}[theorem]{Proposition}
\newtheorem{corollary}[theorem]{Corollary}
\newtheorem{remark}[theorem]{Remark}
\newtheorem{conjecture}[theorem]{Conjecture}
\newtheorem{definition}{Definition}[section]
\def\Z{{\mathbb Z}}
\def\R{{\mathbb R}}
\def\N{{\mathbb N}}
\def\C{{\mathbb C}}
\def\cH{{\mathcal H}}
\def\cL{{\mathcal L}}
\def\a{\alpha}
\def\b{\beta}
\def\e{\varepsilon}
\def\d{\delta}
\def\l{\lambda}
\def\n{\nabla}
\def\p{\partial}
\def\r{\rho}
\def\s{\sigma}
\def\t{\tau}
\def\w{\omega}
\def\W{\Omega}
\def\1{\left(}
\def\2{\right)}
\def\3{\left\{}
\def\4{\right\}}
\def\8{\infty}
\def\sm{\setminus}
\def\ss{\subseteq}
\def\cc{\subset\subset}
\newcommand{\mres}{\mathbin{\vrule height 1.6ex depth 0pt width
0.13ex\vrule height 0.13ex depth 0pt width 1.3ex}}
\DeclareMathOperator*{\dvg}{div}
\DeclareMathOperator*{\supp}{supp}
\newcommand{\maxN}{N^{\text{max}}}
\begin{document}

\title[Rectifiability and compactness for Bernoulli free boundaries]{Rectifiability, finite Hausdorff measure, and compactness for non-minimizing Bernoulli free boundaries}

\author{Dennis Kriventsov}
\address[Dennis Kriventsov]{Rutgers University, Piscataway, NJ}
\email{dnk34@math.rutgers.edu}

\author{Georg S. Weiss}
\address[Georg S. Weiss]{Faculty of Mathematics, University of Duisburg-Essen, Germany}
\email{georg.weiss@uni-due.de}

\begin{center}{\it Dedicated to David Jerison on the occasion of his 70th birthday.}\end{center}

%\date{March 13, 2023}

\begin{abstract}
While there are numerous results on minimizers or stable solutions of the Bernoulli
problem proving regularity of the free boundary and analyzing singularities,
much less is known about {\em critical points} of the corresponding energy.
Saddle points of the energy (or of closely related energies) 
and solutions of the corresponding time-dependent problem  
occur naturally in applied problems such as water waves and
combustion theory.

For such critical points $u$---which can be obtained as limits of classical solutions or
limits of a singular perturbation problem---it has been open since \cite{W03} whether the singular set can be large and what equation the measure $\Delta u$ satisfies,
except for the case of two dimensions. In the present result we use recent techniques such as
a {\em frequency formula} for the Bernoulli problem as well as the
celebrated {\em Naber-Valtorta procedure} to 
answer this more than 20 year old question in an affirmative way:

For a closed class we call \emph{variational solutions} of the Bernoulli problem, we show that the topological free boundary $\partial \{u > 0\}$
(including \emph{degenerate} singular points $x$, at which 
$u(x + r \cdot)/r \rightarrow 0$ as $r\to 0$)
is countably $\cH^{n-1}$-rectifiable and
has locally finite $\cH^{n-1}$-measure,
and we identify the measure $\Delta u$ completely. This gives a more precise characterization of the free boundary of $u$ in arbitrary dimension than was previously available even in dimension two.

We also show that
limits of (not necessarily minimizing) classical solutions as well as 
limits of critical points of a singularly perturbed energy are
variational solutions, so that the result above applies directly to all of them.
\end{abstract}
\maketitle
%\begin{spacing}{0.01}
%\tableofcontents
%\end{spacing}

\section{Introduction}
The one-phase Bernoulli problem is one the most studied in the free boundary literature. It has physical motivation from fluid dynamics and flame propagation, mathematical connections to minimal surfaces, optimization problems, and semilinear elliptic equations, and generally serves as an archetypal free boundary configuration. 

An elementary formulation of the problem is as follows: a nonnegative continuous function $u$ solves, on some domain $\W$,
\[
	\begin{cases}
		\Delta u = 0 & \text{ on } \{u > 0\} \cap \W, \\
		u = 0 & \text{ on } \p \{u > 0\} \cap \W, \\
		u_\nu = 1 & \text{ on } \p \{u > 0\} \cap \W,
	\end{cases}
\]
where $\nu$ is the inward unit normal to the (presumed sufficiently smooth) set $\{u > 0\}$. The problem, given a fixed set $\{u > 0\}$, is overdetermined, but as $\{u > 0\}$ is also free, there is hope of finding solutions. Indeed, one common approach is to observe that, formally, critical points to the functional
\begin{equation}\label{eq:altcafffunctional}
	E(u) = \int_\W \left(|\n u|^2 + \chi_{\{u > 0\}}\right)
\end{equation}
are solutions to this free boundary problem. Then \emph{minimizers} of \eqref{eq:altcafffunctional} will be critical points, and can easily be shown to exist (given some appropriate boundary conditions). A regularity theory for minimizers is by now fairly well-developed following \cite{AC}, and in particular implies that $\p \{u > 0\}$ is given by a smooth hypersurface separating $\{u > 0\}$ and the interior of $\{u = 0\}$ \cite{AC, KNS}, except on a set of high codimension (see \cite{W98, AC, CJK, JS}).

However, \eqref{eq:altcafffunctional} is not convex, and it is easy to see that there are critical points which are not minimizers (in any possible sense of the term). 
Actually, saddle points of the energy are more relevant than minimizers in some physical applications
such as water waves, and also are essential to studying the time-dependent problem with its own applications.

Although Alt and Caffarelli introduce the notion of {\em weak solutions}
in \cite{AC} and derive a regularity theory for them, limits of classical solutions
or of singular perturbations of the problem are in general
not contained in that class of weak solutions. In that sense 
the class of weak solutions
is too narrow.
On the other hand, the class of {\em viscosity solutions} is closed under limits but extremely large,
and our main result is likely false for general viscosity solutions (in light of the examples in \cite{wolff2}).
In Section \ref{sec:examples} we compare notions of solutions, including elementary examples.
 
In this paper we will be concerned with \emph{variational solutions} of \eqref{eq:altcafffunctional} (see Section \ref{sec:prelimvar}). These are, roughly, pairs of functions $(u, \chi)$ with $u \in C^0(\W) \cap C^2(\{u > 0\}\cap \W)$ nonnegative, Lipschitz continuous and harmonic when positive, while $\chi : \W \rightarrow \{0, 1\}$ generalizes the role played by the characteristic function $\chi_{\{u > 0\}}$. The pair must also be a critical point of \eqref{eq:altcafffunctional} under domain, or inner, variations. This is expressed by the identity
\[
	\int \left((|\n u|^2 + \chi) \dvg \xi - 2 \n u \cdot D \xi \n u\right) = 0
\]
for all vector fields $\xi \in C_c^\infty(\W; \R^n)$. Notice that without $\chi$, this is the traditional stationarity condition for the Dirichlet energy under perturbations of the form $u_t = u \circ \phi_t$, where $\phi_t$ is the flow of $\xi$; it is straightforward to verify that every ``classical'' solution of the Bernoulli problem is a variational solution with $\chi = \chi_{\{u > 0\}}$.

Variational solutions arise naturally from studying limits of more regular solutions, and were examined by the second author in \cite{W03} in the context of singular perturbations of semilinear equations. One observation made there is that while inner-variation solutions are closed under taking limits, they are closed only in a certain generalized sense. The limit $\chi$ of $\chi_{\{u_k > 0\}}$, which is part of the energy \eqref{eq:altcafffunctional}, is a characteristic function of a set, but it need not be the positivity set of the limiting function $\{u > 0\}$; instead, only the inclusion $\chi \geq \chi_{\{u > 0\}}$ holds, hence the relaxed definition here. 

The main result about variational solutions proved in \cite{W03} (stated here for elliptic rather than parabolic equations) is roughly as follows:
\begin{theorem}[Weiss \cite{W03}]\label{thm:weiss03}
	Let $(u, \chi)$ be a variational solution on $\W$. Then 
	\[
		\Delta u = \cH^{n-1}\mres \partial^*\{u>0\} + 2 \theta \cH^{n-1}\mres \Sigma_{**} + \l \mres \Sigma_z,
	\]
	where
	\begin{itemize}
		\item the reduced boundary $\partial^*\{u>0\}$ is open relative
to the topological free boundary and locally given by the graph of a smooth function.
		\item $\Sigma_{**}$ consists of points $x \in \p \{u > 0\}$ for which $u(y) = \theta |(y - x)_n| + o(|y - x|)$ locally after rotation, for some $\theta(x) > 0$; this set is countably $\cH^{n-1}$-rectifiable.
		\item $\Sigma_z$ is the set of {\em degenerate} free boundary points $x$, 
at which $u(x+r\cdot)/r\to 0$ as $r\to 0$, and $\lambda$ is a positive Borel measure with
		\[
			\lim_{r \to 0}\frac{\l(B_r(x))}{r^{n-1}} = 0
		\]
		for $\cH^{n-1}$-a.e. $x \in \W$.
	\end{itemize}
\end{theorem}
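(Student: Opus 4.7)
The approach is to extract a Weiss-type monotonicity formula from the inner-variation identity, use it to run a blow-up analysis that stratifies the free boundary, and then identify $\Delta u$ on each stratum.

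I would begin with the monotonicity formula. Testing the stationarity identity against the radial cutoff vector fields $\xi(y) = \eta(|y-x|)(y-x)$, with $\eta$ a smooth bump approximating $\chi_{[0,r]}$, and integrating by parts using that $u$ is $C^2$ and harmonic on $\{u>0\}$, one shows that
\[
	W(r,x) \;=\; \frac{1}{r^n}\int_{B_r(x)}\!(|\n u|^2+\chi) \;-\; \frac{1}{r^{n+1}}\int_{\p B_r(x)}\! u^2
\]
is non-decreasing in $r$, with $r$-derivative equal to a positive multiple of $\int_{\p B_r(x)} (\p_\nu u - u/r)^2 \, d\cH^{n-1}$. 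Consequently the density $\Theta(x) := \lim_{r\to 0^+} W(r,x)$ exists at every point of the topological free boundary and is upper semicontinuous, and equality in the monotonicity forces every blow-up pair to be $1$-homogeneous.

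Next I would carry out a blow-up analysis. The Lipschitz bound makes the rescalings $u_{x,r}(y) := r^{-1} u(x+ry)$ precompact locally in $C^0$ and weakly in $H^1$, and $\chi(x+r\cdot)$ is weak-$\ast$ compact in $L^\infty$; by standard closure any subsequential limit $(u_0,\chi_0)$ is itself a $1$-homogeneous variational solution. A classification of such homogeneous solutions, in which the constraint $\chi_0 \geq \chi_{\{u_0>0\}}$ and the inner-variation identity both play essential roles, yields exactly three possibilities up to rotation: the half-plane $u_0(y) = y_n^+$ with $\chi_0 = \chi_{\{y_n > 0\}}$; the two-plane $u_0(y) = \theta |y_n|$ with $\chi_0 \equiv 1$ for some $\theta > 0$; or $u_0 \equiv 0$ (with $\chi_0$ otherwise unconstrained). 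These cases correspond, respectively, to the points of $\p^*\{u>0\}$, $\Sigma_{**}$, and $\Sigma_z$, and the value of $\Theta$ together with the vanishing or non-vanishing of the blow-up distinguishes them.

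The remaining structural claims follow along fairly established lines. Smoothness of $\p^*\{u>0\}$ at a half-plane blow-up point comes from Alt--Caffarelli/Caffarelli flatness improvement together with bootstrap. For the branch set $\Sigma_{**}$, the blow-up $\theta(x)|y\cdot\nu(x)|$ picks out a unique tangent hyperplane $\nu(x)^\perp$, and the classical criterion that a set with an approximate tangent $(n-1)$-plane at $\cH^{n-1}$-a.e.\ point is countably $\cH^{n-1}$-rectifiable then yields rectifiability. To identify $\Delta u$, observe first that it is a nonnegative Radon measure supported on $\p\{u>0\}$ (since $u$ is nonnegative, Lipschitz, and harmonic where positive). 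Decomposing $\p\{u>0\} = \p^*\{u>0\} \cup \Sigma_{**} \cup \Sigma_z$ and computing the $(n-1)$-density of $\Delta u$ on each stratum from the corresponding blow-up profile gives density $1$ on $\p^*\{u>0\}$ (from $\p_\nu u = 1$ across a smooth interface), density $2\theta(x)$ on $\Sigma_{**}$ (from two regular sheets of slope $\theta$), and vanishing $(n-1)$-density at $\cH^{n-1}$-a.e.\ point of $\Sigma_z$ (since the blow-up is zero there); this produces the stated three-piece decomposition.

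The main obstacle is the blow-up classification---in particular, ruling out more exotic $1$-homogeneous variational solutions in dimensions $n \geq 3$---together with the quantitative rectifiability and $(n-1)$-density control on $\Sigma_{**}$ and the decay of $\l$ on $\Sigma_z$. These are precisely the points where the original Weiss argument is limited and where the frequency formula and the Naber--Valtorta scheme highlighted in the introduction are designed to take over.
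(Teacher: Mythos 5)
This theorem is not proved in the paper at all: it is cited from Weiss~\cite{W03} as background, and the paper's contribution (Theorem~\ref{thm:intro} / Theorem~\ref{thm:distributional}) is a strictly stronger statement that settles what~\cite{W03} leaves open, namely that $\lambda\equiv 0$ and $\p\{u>0\}$ has locally finite $\cH^{n-1}$-measure. So there is no "paper's own proof" to compare against; the closest analogue in the paper is the development in Sections~\ref{sec:prelim}--\ref{sec:corollaries}, which reproves parts of this theorem en route to the new results (e.g.\ Proposition~\ref{prop:monotone} is your $W(r,x)$; Lemma~\ref{lem:density0char} and Lemma~\ref{lem:singular1density} analyze blow-ups; Theorem~\ref{thm:distributional} identifies $\Delta u$).

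Within your sketch there is a genuine gap in the blow-up classification. You assert that the only $1$-homogeneous variational solutions are the half-plane, the two-plane, and $0$, and build the stratification on that trichotomy. This is false in higher dimensions: there exist $1$-homogeneous, non-flat Alt--Caffarelli cones (with $\chi_0=\chi_{\{u_0>0\}}$ and a one-sided positivity set) which are variational solutions but are not of half-plane form. These do occur as blow-ups at points of $\p\{u>0\}\sm\p^*\{u>0\}$ that are neither in $\Sigma_{**}$ nor in $\Sigma_z$, so "exactly three possibilities" cannot be used as the organizing principle; what is needed instead (and what Weiss actually shows) is that this extra singular set is $\cH^{n-1}$-negligible, so it contributes nothing to $\Delta u$. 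Similarly, the rectifiability of $\Sigma_{**}$ does not follow from the bare fact that the blow-up of $u$ at each point of $\Sigma_{**}$ is a two-plane; having a pointwise first-order Taylor expansion of $u$ says nothing a priori about the tangent structure of the \emph{set} $\Sigma_{**}$, and you need a genuine geometric argument (tangent measures, or in this paper the Naber--Valtorta/Reifenberg machinery). Finally, the zero-$(n-1)$-density of $\lambda$ a.e.\ \emph{is} part of the 2003 theorem, not something that requires the new frequency/Naber--Valtorta tools, so the last paragraph misattributes the division of labor: the new tools are needed to upgrade "zero density a.e." to "$\lambda\equiv 0$" and to get finite measure, not to establish the cited statement.
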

Note that the part 
$2 \theta \cH^{n-1}\mres \Sigma_{**}$ of the measure 
is not zero in general even for limits of classical solutions, and
$\theta$ can be in $(0,1)$ (see Section \ref{sec:examples}
as well as \cite[Introduction]{W03}).
However, the result does not settle the question of whether
$\l$ may be nonzero, or at least whether it may be supported on a large
set.
These questions are deeply related to the {\em harmonic measure}
of the free boundary, since for fixed $X\in \{u>0\}$, 
the harmonic measure $\omega^X$ of the set $\{u>0\}$ and $\Delta u$
are under mild assumptions on the set $\{u>0\}$ locally mutually absolutely continuous in the connected component
containing $X$.
So David Jerison and one of the authors
were in 2000 able to apply the result \cite{wolff} on plane harmonic measures
to obtain that the topological free boundary is a set
of $\sigma$-finite length and that in that case
 \[
		\Delta u = \cH^{1}\mres \partial^*\{u>0\} + 2 \theta \cH^{1}\mres \Sigma_{**}.
	\]
However, in dimension $n\geq 3$, the question whether the singular set 
(or even the whole topological free boundary)
can be a large
set and whether the measure $\lambda\ne 0$
has been completely open since.
Moreover, examples of extremely 
irregular harmonic measures in $\R^3$ (\cite{wolff2})
even suggested the contrary.

In this paper we give a positive answer to the questions above.
Our main theorem here is as follows:
\begin{theorem}\label{thm:intro}
	Let $(u, \chi)$ be a variational solution on a domain $\W$. Then:
	\begin{enumerate}[(i)]
		\item Either $u \equiv 0$ or $\chi = \chi_{\{u > 0\}}$.
        \item The whole topological free boundary $\partial \{u > 0\}$
is countably $\cH^{n-1}$-rectifiable and has locally finite $\cH^{n-1}$-measure.
        \item \[
		\Delta u = \cH^{n-1}\mres \p^* \{ u > 0\} + 2 \a(n) \sqrt{H_x(u; 0+)} \cH^{n-1}\mres \Sigma_{**}
		\]
		for $H_x(u; 0+) = \lim_{r\to 0}r^{-2}\fint_{\partial B_r(x)} u^2\> d\mathcal{H}^{n-1}$
		and some explicit normalizing constant $\a(n)$.
		\item At $\cH^{n-1}$-a.e. point $x \in \p\{ u > 0\}\sm\p^* \{ u > 0\}$,
		\[
			\frac{r\int_{B_r(x)}\left(|\n u|^2 + \chi - 1\right)}{\int_{\p B_r(x)}u^2 \> d\mathcal{H}^{n-1}} \rightarrow 1,
		\]
		and
		\[
			\frac{r \int_{B_r(x)} \left|\n (u - \a(n)|(y - x)\cdot \nu(x)|)\right|^2}{\int_{\p B_r(x)}u^2 \> d\mathcal{H}^{n-1}} \rightarrow 0
		\]
		as $r\to 0$, for some unit vector $\nu(x)$.
	\end{enumerate}
\end{theorem}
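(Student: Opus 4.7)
The overall plan is to develop an Almgren-type frequency formula for variational solutions $(u, \chi)$, use it to classify blowups at free boundary points, and then apply the Naber-Valtorta quantitative stratification to upgrade the classification to rectifiability and locally finite $\mathcal{H}^{n-1}$-measure of the whole topological free boundary $\partial\{u>0\}$. The rest of Theorem~\ref{thm:intro} then falls out of Theorem~\ref{thm:weiss03}.

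\emph{Step 1 (Frequency and blowup classification).} For $x \in \partial\{u>0\}$ and small $r$, set $H_x(r) = \int_{\partial B_r(x)} u^2\,d\mathcal{H}^{n-1}$ and $D_x(r) = \int_{B_r(x)}(|\nabla u|^2 + \chi - 1)$, and define a Bernoulli frequency $N_x(r) = r D_x(r)/H_x(r)$. Testing the domain variation identity with a cutoff of the radial field $y\mapsto y - x$ yields a formula for $D_x'(r)$, while differentiating $H_x$ and using harmonicity of $u$ on $\{u > 0\}$ yields $H_x'(r)$; together these should give (almost-)monotonicity of $N_x$ and the sharp bound $N_x(0+) \ge 1$, with equality iff all blowups along subsequences are half-plane profiles $\alpha(n)|(y - x)\cdot\nu|$. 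The normalizing constant $\alpha(n)$ is forced by the Bernoulli jump encoded in the variational identity, which directly yields $\theta = \alpha(n)\sqrt{H_x(u;0+)}$ on $\Sigma_{**}$.

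\emph{Step 2 (Rectifiability via Naber-Valtorta).} With the frequency in hand, I would verify the two main inputs of Naber-Valtorta: the cone-splitting principle (blowups $k$-symmetric about two distant points are $(k+1)$-symmetric) and the $L^2$-best-approximation inequality (Jones $\beta_2$-numbers on $\partial\{u>0\}$ controlled by frequency drops across scales). The ensuing covering/Reifenberg argument produces locally finite $\mathcal{H}^{n-1}$-measure and rectifiability of the top stratum; the remaining stratum, consisting of points admitting no half-plane blowup, has Hausdorff dimension at most $n-2$ by dimension reduction on $N_x(0+)$, so all of $\partial\{u>0\}$ is rectifiable with locally finite $\mathcal{H}^{n-1}$-measure, proving (ii). At $\mathcal{H}^{n-1}$-a.e. non-reduced boundary point some blowup is a half-plane, which is (iv).

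\emph{Step 3 (Identifying $\lambda$ and $\chi$).} Theorem~\ref{thm:weiss03} decomposes $\Delta u = \mathcal{H}^{n-1}\mres\partial^*\{u>0\} + 2\theta\,\mathcal{H}^{n-1}\mres\Sigma_{**} + \lambda\mres\Sigma_z$. Step 2 places $\Sigma_z$ in the $(n-2)$-dimensional stratum $\{N_x(0+)>1\}$, so $\mathcal{H}^{n-1}(\Sigma_z) = 0$; combined with the Weiss 03 vanishing density $\lambda(B_r(x))/r^{n-1} \to 0$ $\mathcal{H}^{n-1}$-a.e. and an upper bound $\lambda(B_r(x))\le C r^{n-1}$ extracted from the frequency formula, we conclude $\lambda = 0$, giving (iii). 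For (i), once $\{u>0\}$ has locally finite perimeter, I plug a field $\xi\in C_c^\infty(\W;\R^n)$ supported on a ball intersecting both $\{u>0\}$ and a putative phantom region $\{\chi=1\}\setminus\overline{\{u>0\}}$ into the inner-variation identity: the Bernoulli jump across $\partial^*\{u>0\}$ produces a contribution unmatched from the phantom side, forcing the phantom region to be empty unless $u \equiv 0$.

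\emph{Main obstacle.} The hardest step is Step 2. Variational solutions admit no $\varepsilon$-regularity on $\Sigma_{**}\cup\Sigma_z$, so every ingredient of Naber-Valtorta — cone splitting, $L^2$-best approximation, summability of frequency drops — must be derived from the frequency alone, with particular care for the sign-indefinite $\chi-1$ term in $D_x$ whose contribution across scales is not a priori controlled. A related subtlety is that uniqueness of the frequency limit $N_x(0+)$ and of the blowup is not automatic; proving it $\mathcal{H}^{n-1}$-a.e. is what underlies the convergence statements in (iv) and must be obtained within the same Naber-Valtorta scheme.
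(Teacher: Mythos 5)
Your high-level plan — frequency formula, blowup classification, Naber-Valtorta covering, then read off (i),(iii),(iv) — is the right skeleton, and your ``Main obstacle'' paragraph correctly identifies where the difficulty lies. But Step~1 as written over-claims in a way that the rest of the argument cannot absorb, and Step~3's mechanism for killing $\lambda$ is different from (and weaker than) what actually works.

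The serious gap is in Step~1. You assert that one can show $N_x(0+)\ge 1$ \emph{with equality iff all blowups along subsequences are half-plane profiles}. This classification is not available. The normalized rescalings $u(x+ry)/\sqrt{H_x(u;r)}$ are \emph{not} variational solutions (the Bernoulli energy is only invariant under the one-homogeneous rescaling $u(x+ry)/r$), so you cannot pass the domain-variation identity to the blowup, and the only a priori estimate is the bound on the frequency derivative. Weak $W^{1,2}$ convergence plus $L^2$ convergence on $\partial B_1$ gives a homogeneous, subharmonic, nonnegative limit, but \emph{not} strong $W^{1,2}$ convergence and not a PDE for the limit; whether the renormalized sequence converges strongly is exactly the concentration-compactness Conjecture stated in Section~\ref{sec:blowup} of the paper, which is left open. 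The paper's resolution is to classify the limit only when the blowup is \emph{approximately one-dimensional} (Theorem~\ref{thm:approx1Dlimits}), where the $n-1$ flat directions allow one to upgrade to strong convergence away from $\{x_n=0\}$ and then use a one-dimensional balancing computation to force $N_\infty=1$ and $\beta_+=\beta_-$. This is precisely why the frequency-drop dichotomy (Theorem~\ref{thm:dichotomy}) is formulated the way it is: if the frequency is almost constant on $n$ points in general position, the two-point homogeneity Lemma~\ref{lem:twopointhom} shows the rescaled solution is approximately one-dimensional, and only then does the classification apply. Your proposed ``cone-splitting plus dimension reduction on $N_x(0+)$'' would need the general blowup classification as an input, and so would stall at exactly the point you flag as the obstacle; the paper's contribution is the observation that the one-dimensional case is enough.

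A second, smaller issue is Step~3. You argue $\lambda=0$ by placing $\Sigma_z$ inside the stratum $\{N_x(0+)>1\}$ of Hausdorff dimension $\le n-2$. That inclusion is not established (and is not needed): $\Sigma_z$ may contain points with $N_x(0+)=1$ for which simply $H_x(u;0+)=0$. The paper's argument is instead that the density of $\Delta u$ at $\cH^{n-1}$-a.e.~$x\in\Sigma^H$ equals $2\alpha(n)\sqrt{H_x(u;0+)}$ (Lemma~\ref{lem:singular1density} plus Theorem~\ref{thm:distributional}), and on $\Sigma_z$ this density is zero because $H_x(u;0+)=0$; thus $\lambda$ vanishes because its density vanishes, not because its support is small. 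Likewise for (i), the paper does not use a Bernoulli-jump test-field argument but rather the Lebesgue-density-$1$ statement in Lemma~\ref{lem:singular1density}(2) together with the relative isoperimetric inequality applied to a putative phantom component of $\{u=0\}^\circ\cap\{\chi=1\}$ — the point being that such a component would have reduced boundary points where $\{u>0\}$ has density $\le\frac12$, contradicting the density-$1$ statement.
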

The first conclusion implies that variational solutions are closed under limits \emph{in the natural sense}, without tracking $\chi$ separately from $\chi_{\{u > 0\}}$---except in one specific case, where the limit is $u \equiv 0$ (and this cannot be improved, see the examples in \cite{W03}). The third conclusion shows that in the sense of distributions, $u$ satisfies the expected PDE, without the ``anomalous'' diffuse degenerate measure $\l$.

As we show in Section \ref{sec:compactness} that each limit of classical solutions of the
Bernoulli problem is a variational solution, we obtain as first application
of the Main Theorem above the following (see Theorem \ref{classcomp}).
\begin{theorem}\label{thm:compact_intro}
	Let $u_k$ be a sequence of classical solutions of the Bernoulli problem on $B_1$, with
	\[
		\sup_k \|u_k\|_{C^{0, 1}(B_1)} < \infty.
	\]
	Then, along a subsequence, $u_k \rightarrow u \in C^{0, 1}$ locally uniformly, $(u, \chi_{\{u > 0\}})$ is a variational solution, and for $\cH^{n-1}$-a.e. $x \in \Sigma_{**}(u)$,
	\[
		\a(n)\sqrt{H_x(u; 0+)} \leq 1.
	\]
	Moreover, either $u \equiv 0$ or $\chi_{\{u_k > 0\}} \rightarrow \chi_{\{u > 0\}}$ in $L^1_{\mathrm{loc}}(B_1)$ and $\p \{u_k > 0\} \rightarrow \p \{u > 0\}$ locally in $B_1$ in Hausdorff topology
as $k\to\infty$.
Most importantly,
$\p \{u > 0\}$ is countably $\cH^{n-1}$-rectifiable, has locally in $B_1$ 
finite $\cH^{n-1}$-measure, and
	\[
		\Delta u = \cH^{n-1}\mres \p^* \{ u > 0\} + 2 \a(n) \sqrt{H_x(u; 0+)} \cH^{n-1}\mres \Sigma_{**}.
	\]
\end{theorem}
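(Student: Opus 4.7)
The uniform Lipschitz bound and Arzel\`a--Ascoli produce a subsequence with $u_k \to u$ locally uniformly, $u$ Lipschitz. A further subsequence gives $\chi_{\{u_k > 0\}} \rightharpoonup^* \chi$ in $L^\infty$; uniform convergence forces $\chi \geq \chi_{\{u>0\}}$. To show $(u, \chi)$ is a variational solution, I pass to the limit in the inner-variation identity
\[
\int \left[(|\n u_k|^2 + \chi_{\{u_k > 0\}}) \dvg \xi - 2 \n u_k \cdot D\xi \n u_k\right] = 0
\]
valid for each classical solution $u_k$. The technical point is strong $L^2_{\mathrm{loc}}$ convergence of $\n u_k$, established separately on $\{u > 0\}$ (by interior elliptic regularity, since $u_k > 0$ on compact subsets eventually), on $\{u=0\}^\circ$ (non-degeneracy of classical solutions forces $u_k \equiv 0$ eventually on compact subsets), and by the observation $|\p\{u>0\}| = 0$, which follows from non-degeneracy together with the uniform Lipschitz bound. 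The $\chi$-term passes via weak-$*$ convergence.

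\textbf{Structure and convergence of sets.} Applying Theorem \ref{thm:intro}(i) to $(u,\chi)$ yields $u \equiv 0$ or $\chi = \chi_{\{u>0\}}$, and in the latter case parts (ii)--(iii) immediately give rectifiability, local $\cH^{n-1}$-finiteness of $\p\{u>0\}$, and the formula for $\Delta u$. Assume $u \not\equiv 0$. Strong $L^1_{\mathrm{loc}}$ convergence $\chi_{\{u_k > 0\}} \to \chi_{\{u > 0\}}$ then follows from weak-$*$ convergence of $\{0,1\}$-valued functions combined with the identity $\int \chi_{\{u_k > 0\}}^2 \to \int \chi_{\{u > 0\}}^2$, which upgrades to strong $L^2$. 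Hausdorff convergence of free boundaries is obtained by two one-sided inclusions: uniform convergence rules out accumulation of $\p\{u_k > 0\}$ inside $\{u > 0\}$, and non-degeneracy (passed to the limit via $L^1$ convergence of the positivity sets and the Lipschitz bound) rules out accumulation inside $\{u=0\}^\circ$; the converse inclusion follows since each point of $\p\{u > 0\}$ is approached by points of $\{u > 0\}$, hence by points of $\{u_k > 0\}$, and thus by points of $\p\{u_k > 0\}$.

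\textbf{The upper bound $\a(n)\sqrt{H_x(u;0+)} \leq 1$: the main obstacle.} This is the essential new content beyond Theorem \ref{thm:intro}. At $\cH^{n-1}$-a.e.\ $x \in \Sigma_{**}(u)$, the blow-up of $u$ at $x$ is $\theta |y \cdot \nu(x)|$ with $\theta := \a(n)\sqrt{H_x(u;0+)}$. A diagonal argument produces rescalings $v_j(y) := u_{k_j}(x + r_j y)/r_j$ converging locally uniformly to this two-plane profile, with each $v_j$ a classical Bernoulli solution on a ball of radius $1/r_j$ and uniformly Lipschitz. On each connected component of $\{v_j > 0\}$ inside a large ball, $|\n v_j|^2$ is subharmonic, equals $1$ on the smooth portion of $\p\{v_j > 0\}$ (which Hausdorff-approaches the hyperplane $\{y\cdot\nu(x) = 0\}$ by non-degeneracy), and tends to $\theta^2$ in the interior; the maximum principle then yields $\theta^2 \leq 1$ in the limit. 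The principal obstacle is validating the boundary condition $|\n v_j| = 1$ on portions of the free boundary that are genuinely smooth, which should follow from improvement-of-flatness for classical Bernoulli solutions applied to the nearly flat profile. An alternative that sidesteps flatness improvement is to compare the Weiss energy of $v_j$ with that of the one-plane profile $(y\cdot\nu)_+$ using the monotonicity formula for classical solutions, directly squeezing $\theta^2 \leq 1$ from the energy levels of the two model profiles.
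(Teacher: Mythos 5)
First, you obtain $\chi$ as a weak-$*$ limit of $\chi_{\{u_k>0\}}$ in $L^\infty$. That limit a priori takes values in $[0,1]$, not $\{0,1\}$, so you cannot invoke Theorem~\ref{thm:intro} at all until you know $\chi$ is a genuine characteristic function. The paper gets this from Lemma~\ref{lem:BV}: since each $(u_k,\chi_{\{u_k>0\}})$ is already a variational solution, $\chi_{\{u_k>0\}}$ is bounded in $BV_{\mathrm{loc}}$, hence converges in $L^1_{\mathrm{loc}}$ and a.e., and the a.e.\ limit is automatically $\{0,1\}$-valued. Your later upgrade to strong $L^1$ via $\|\chi_{\{u_k>0\}}\|_{L^2}^2\to\|\chi\|_{L^2}^2$ would then be redundant.

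Second, your proof of strong $W^{1,2}_{\mathrm{loc}}$ convergence of $\n u_k$ rests on the claim $|\partial\{u>0\}|=0$, which you attribute to ``non-degeneracy together with the uniform Lipschitz bound.'' Non-degeneracy only gives a lower bound on the density of $\{u_k>0\}$ at free boundary points; it says nothing about the density of $\{u_k=0\}$, and indeed for the limit $u$ the density of $\{u>0\}$ at $\Sigma^H$-points is $1$. So $|\partial\{u>0\}|=0$ is not available at this stage --- it is a \emph{consequence} of the main structure theorem, not an input. The paper bypasses the whole issue with one line: for $w$ Lipschitz, nonnegative, harmonic on $\{w>0\}$,
\begin{equation*}
\int \eta\,|\n w|^2 \;=\; -\int w\,\n\eta\cdot\n w \qquad (\eta\in C_c^\infty),
\end{equation*}
and the right-hand side is stable under locally uniform plus weak $W^{1,2}$ convergence, so $\int\eta|\n u_k|^2\to\int\eta|\n u|^2$ and hence $\n u_k\to\n u$ strongly in $L^2_{\mathrm{loc}}$. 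No domain decomposition or measure-zero claim is needed.

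For the bound $\a(n)\sqrt{H_x(u;0+)}\leq 1$, your first route (maximum principle for $|\n v_j|^2$ with boundary value $1$ on the smooth free boundary) is essentially a rederivation of the Alt--Caffarelli gradient estimate, and you correctly identify the validation of the boundary condition as the hard step. The paper simply quotes that estimate as Proposition~\ref{prop:lip}: at scale $\d$ around a free boundary point of $u_k$ one has $\sup_{B_\d}|\n u_k|\leq 1+\w(\d)$; passing this to the locally uniform limit and then to the blow-up $\a(n)\sqrt{H_x(u;0+)}\,|y_n|$ (from Lemma~\ref{lem:singular1density}), whose Lipschitz constant is exactly $\a(n)\sqrt{H_x(u;0+)}$, gives the bound immediately. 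You should use the cited estimate rather than reprove it. The Hausdorff-convergence argument is essentially the paper's, and that part is fine.
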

Finally we show in Section \ref{sec:perturbation} that each limit of the related singular perturbation
problem for semilinear elliptic equations is a variational solution, so we obtain as second application
of Theorem \ref{thm:intro} above the following (see Theorem \ref{singpert}).
\begin{theorem}\label{thm:perturb_intro}
Let $(u_\epsilon)_{\epsilon\in (0,1)}$ be a uniformly bounded family of solutions
of \begin{equation}\label{epsprob0}
\Delta u_\epsilon=\beta_\epsilon(u_\epsilon) \> \textrm{ in } B_1,
\end{equation}
where
$\epsilon>0$,
$\beta_\epsilon(s)=\frac{1}{\epsilon}\beta(\frac{s}{\epsilon})$
and $\beta$ is a Lipschitz continuous function such that $\beta>0$ in
$(0, 1)$, $\beta\equiv 0$ outside $(0, 1)$ and
$\int_0^1\beta(s)\> ds=\frac{1}{2}$.
Then each limit $u$ as $\epsilon\to 0$ along a subsequence
satisfies the following:
$(u,\chi_{\{u>0\}})$ is a variational solution,
 $\p \{u > 0\}$ is countably $\cH^{n-1}$-rectifiable, has locally in $B_1$ 
finite $\cH^{n-1}$-measure, and
	\[
		\Delta u = \cH^{n-1}\mres \p^* \{ u > 0\} + 2 \a(n) \sqrt{H_x(u; 0+)} \cH^{n-1}\mres \Sigma_{**};\]
moreover, $\a(n) \sqrt{H_x(u; 0+)}\leq 1$ for $\cH^{n-1}$-a.e. $x\in \Sigma_{**}$. 
\end{theorem}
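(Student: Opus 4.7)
My plan is to extract a limit along a subsequence, identify it as a variational solution by passing to the limit in the inner variation identity, apply Theorem~\ref{thm:intro}, and finally establish the slope bound. By the classical uniform Lipschitz estimate for this family of singularly perturbed problems (see e.g.\ Lederman--Wolanski and related work), $(u_\epsilon)$ is bounded in $W^{1,\infty}_{\mathrm{loc}}(B_1)$. Along a subsequence $u_\epsilon \to u$ locally uniformly; interior elliptic regularity on the set $\{u_\epsilon > \epsilon\}$, where the equation reduces to $\Delta u_\epsilon = 0$, gives that $u$ is harmonic on $\{u>0\}$ and, together with an energy identity, strong $L^2_{\mathrm{loc}}$ convergence $\nabla u_\epsilon \to \nabla u$.

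The inner variation identity for $u_\epsilon$, obtained by multiplying $\Delta u_\epsilon = \beta_\epsilon(u_\epsilon)$ by $\nabla u_\epsilon \cdot \xi$ and using $\beta_\epsilon(u_\epsilon)\nabla u_\epsilon = \nabla (B_\epsilon \circ u_\epsilon)$ with $B_\epsilon(s) = \int_0^s \beta_\epsilon$, reads
\[
\int_{B_1} \left[(|\nabla u_\epsilon|^2 + 2 B_\epsilon(u_\epsilon)) \dvg \xi - 2 \nabla u_\epsilon \cdot D\xi \, \nabla u_\epsilon\right] = 0
\]
for every $\xi \in C_c^\infty(B_1; \R^n)$. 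Since $2 B_\epsilon \in [0,1]$ equals $1$ on $[\epsilon,\infty)$ and $0$ on $(-\infty,0]$, along a subsequence $2 B_\epsilon(u_\epsilon) \rightharpoonup^* \chi$ weakly-$*$ in $L^\infty_{\mathrm{loc}}(B_1)$ with $\chi_{\{u>0\}} \leq \chi \leq 1$. The vanishing in the limit of the volume of the transition layer $\{0 < u_\epsilon < \epsilon\}$---a consequence of the Lipschitz bound together with a uniform perimeter estimate for $\partial \{u_\epsilon > 0\}$---forces $\chi$ to be the characteristic function of some set. Combined with the strong $L^2$ convergence of the gradients, this lets us pass to the limit in the inner variation identity, so that $(u, \chi)$ is a variational solution on $B_1$. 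Theorem~\ref{thm:intro}(i) then yields $\chi = \chi_{\{u>0\}}$ (unless $u \equiv 0$, in which case the remaining conclusions are vacuous), and parts (ii)-(iii) provide the rectifiability, local finiteness of the $\cH^{n-1}$-measure of $\partial \{u > 0\}$, and the identity for $\Delta u$.

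The slope bound $\alpha(n)\sqrt{H_x(u;0+)} \leq 1$ at $\cH^{n-1}$-a.e.\ $x \in \Sigma_{**}$ is, I expect, the main obstacle. At such a point, Theorem~\ref{thm:intro}(iv) supplies a blowup $u(x + r\cdot)/r \to C |y \cdot \nu(x)|$ with $C = \alpha(n)\sqrt{H_x(u;0+)}$. A diagonal extraction produces parameters $\epsilon_k, r_k \to 0$ with $\epsilon_k / r_k \to 0$ such that $v_k(y) = u_{\epsilon_k}(x + r_k y)/r_k$ likewise converges to $C |y \cdot \nu(x)|$; each $v_k$ solves a singular perturbation equation with parameter $\epsilon_k/r_k$. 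The classical pointwise gradient bound for this family---derived via a maximum-principle argument on an associated $P$-function of the form $|\nabla u_\epsilon|^2 - 2 B_\epsilon(u_\epsilon)$, cf.\ the Caffarelli--Salsa monograph---gives $|\nabla v_k| \leq 1 + o(1)$ uniformly on compact subsets near the free boundary, and passing to the limit forces $C \leq 1$.
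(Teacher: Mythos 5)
Your overall strategy mirrors the paper: extract a limit, show it is a variational solution by passing to the limit in the inner-variation identity, apply the structure theorem, and close the slope bound. But two steps differ materially from how the paper handles them, and one of them is not adequately justified as written.

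First, the claim that $\chi$ is $\{0,1\}$-valued a.e.\ is \emph{not} a trivial consequence of the Lipschitz bound plus a perimeter estimate. The Lipschitz bound alone does not control the volume of the transition layer $\{0<u_\epsilon<\epsilon\}$, since near degenerate points $u_\epsilon$ can hover at a small but positive level over a region of order-one measure, and one cannot invoke a ``uniform perimeter estimate for $\partial\{u_\epsilon>0\}$'' before first knowing that $\chi\in\{0,1\}$ a.e.\ (indeed, the $BV$ estimate of Lemma~\ref{lem:BV} is applied to $\chi$, not to $\chi_{\{u_\epsilon>0\}}$). The paper simply cites \cite[Proposition 4.5]{weisszhang} for this fact; the underlying argument there is a genuinely nontrivial density/monotonicity argument, and you should either cite it or supply that argument rather than assert it follows ``from the Lipschitz bound.''

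Second, the slope bound. The paper's route is different: it invokes the Lederman--Wolanski result that $u$ is a \emph{viscosity solution}, and then uses the mechanism described in the remark at the end of Section~\ref{sec:compactness}: at $\cH^{n-1}$-a.e.\ $x\in\Sigma^H$ Lemma~\ref{lem:singular1density} gives $u(x+r\cdot)/r\to\alpha(n)\sqrt{H_x(u;0+)}\,|y\cdot\nu|$ locally uniformly, the limit inherits the viscosity property, and $\alpha|y_n|$ is a viscosity solution only if $\alpha\le 1$. Your alternative via a $P$-function $|\nabla u_\epsilon|^2-2B_\epsilon(u_\epsilon)$ is a reasonable idea, but as stated it has two holes. (a) A direct computation gives $\Delta(|\nabla u_\epsilon|^2-2B_\epsilon(u_\epsilon))=2|D^2u_\epsilon|^2-2\beta_\epsilon(u_\epsilon)^2$, which is not signed, so the maximum principle does not apply without further structure (for minimizers additional estimates save the argument, but you are in the non-minimizing setting). (b) Even granting the pointwise bound $|\nabla u_\epsilon|\le 1+o(1)$ near the free boundary, the diagonal extraction $\epsilon_k/r_k\to 0$ requires quantitative convergence at the scale $r_k$ which you have not established; the paper instead carries out the argument at the level of the viscosity limit $u$ and never needs a simultaneous two-parameter extraction. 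I would either replace the $P$-function step by the paper's viscosity argument, or spell out precisely which version of the uniform gradient bound (and under what hypotheses) you are invoking.

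The rest of the proposal (uniform Lipschitz, strong $W^{1,2}_{\mathrm{loc}}$ convergence, the inner-variation identity with $2B_\epsilon(u_\epsilon)$ as the approximate $\chi$, and the appeal to Theorem~\ref{thm:intro}(i)--(iii)) is aligned with the paper's proof.
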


The proof of the Main Theorem \ref{thm:intro} relies on a 
{\em frequency formula on the set of free boundary points of highest density}
\[
	\Sigma^H = \{x\in \p\{ u>0\}: \lim_{r\to 0} \frac{|B_r\cap \{\chi = 1\}|}{|B_r|} = 1\}
\]
(see Lemma \ref{lem:frequency} and Proposition \ref{prop:Mlimits}).
The monotonicity of our frequency function is not a perturbative phenomenon based on $u$ being approximately harmonic near such points, but rather is exact and incorporates the full nonlinear structure of the problem. Ideas concerning this frequency (but not the monotonicity per se) already appear in \cite{W03}, and the monotonicity and other properties are established for related problems in \cite{VW}.

One key difficulty with using this frequency is that rescalings of a variational solution $u$ of the form $$u_{x, r}(y) = \frac{u(x + r y)}{\sqrt{\fint_{\p B_r} u^2} },$$ are not variational solutions: the Bernoulli problem is invariant under one-homogeneous rescalings but not these. As such, the only estimates available on them come from the monotonicity of the frequency, and in particular these are not sufficient to ensure convergence to a limit strongly in $W^{1, 2}$ or (roughly equivalently) to deduce that the limit is a ``variational solution'' of the Laplace equation. It is then unclear how to characterize the possible tangent objects at points in $\Sigma_z$.

The second main ingredient in this paper is
a new, quantitative, method from geometric measure theory pioneered by Naber and Valtorta in \cite{NV} which allows obtaining considerable information about $\Sigma^H$ by only looking at points and scales where $u_{x, r}$ is ``approximately one-dimensional,'' in the sense that $\p_e u_{x, r}$ is small in $n-1$ directions $e$. In this specific case, together with information derived from the quantitative monotonicity of the frequency, we are able to establish that $u_{x, r}$ converges strongly in $W^{1, 2}$ to a specific one-dimensional profile of the form $\a(n) |y \cdot \nu|$ for some unit vector $\nu$, which has frequency $1$ (the minimum possible frequency in this context). This is enough to justify a dichotomy crucial to the Naber-Valtorta argument: at any scale, either all points in $\Sigma^H$ already have frequency close to $1$, or the frequency between scale $r$ and $\e r$ decreases by a large amount outside of a set which is approximately $n-2$ dimensional at most. The point is that if the frequency does not change much on an approximately $n-1$-dimensional set, then we can show $u_{x, r}$ is approximately one-dimensional and so is close to a well-behaved tangent object. Alongside other technical ingredients, this is the main idea of the proof of Theorem \ref{thm:intro} (ii), while the others follow as corollaries after some further analysis.

In most previous works along these lines, such as for nodal sets of elliptic equations \cite{NV2} or harmonic maps \cite{NV}, the compactness argument required to prove the frequency drop dichotomy is elementary, while for us it poses a core challenge. In a recent related theorem about $Q$-valued harmonic functions
\cite{DNSV}, the authors also face concentration compactness difficulties when passing to
``frequency limits.''
However, the fact that Almgren's frequency formula holds everywhere while ours
holds only on the set of highest density already points to crucial differences
between the two problems: the presence of the regular set in the Bernoulli
problem makes for an (additional) source term in the equation 
which has no counterpart in the $Q$-valued harmonic function problem.
Another difference is that in the $Q$-valued harmonic function problem,
even continuity seems to pose a problem while continuity is in this paper
on the Bernoulli problem part of the assumptions, motivated by the fact that
limits of critical points of the singularly perturbed problem are
(by a Bernstein property) locally Lipschitz continuous.
Note, however, that {\em uniform} continuity of the scaled functions
$u_{x,r}$ is unknown.  
Incidentally, the question whether {\em all} domain variation solutions are Lipschitz
continuous is an interesting question for future research (cf. Remark \ref{rem:lip}).  

The paper is organized as follows: Section \ref{sec:examples} gives a number of elementary examples to better explain the notion of variational solution and the set $\Sigma^H$. In Section \ref{sec:prelim} we recall the notion of variational solution and discuss their basic properties and the monotonicity of the local energy; this follows \cite{W03} but in a simpler context. In Section \ref{sec:freq}, the frequency formula and its consequences are developed. This section follows \cite{VW} for the most part, but again for a simpler problem. In Section \ref{sec:blowup}, we classify limits for approximately one-dimensional frequency blow-up sequences, which play a key role in the rest of the arguments. Section \ref{sec:estimate} contains two major estimates: the frequency drop dichotomy described above, and the $L^2$-subspace approximation bound (which controls the size and rectifiability properties of the large, i.e. at least approximately $n-1$-dimensional, portions of $\Sigma^H$ by drops in frequency). Then in Section \ref{sec:NV}, these are combined with covering arguments and the Naber-Valtorta Reifenberg theorems to prove Theorem \ref{thm:intro} (ii). The rest of the theorem is proved in Section \ref{sec:corollaries}.
Section \ref{sec:compactness} presents a simple example compactness theorem for limits of ``classical'' solutions to the Bernoulli problem, making no attempt at maximal generality but already seemingly obtaining a result quite different from what is available in the literature.
Finally Section \ref{sec:perturbation}
shows that the conclusions of Theorem \ref{thm:intro} hold for limits of the 
related singular perturbation problem.
\section{Notation}
Throughout this work $\R^n$ will be equipped with the Euclidean inner product $x \cdot y$ and the induced norm $|x|$. Due to the nature of the problem we will sometimes write $x \in \R^n$ as $x=(x', x_n)$. $B_r(x)$ will be the open $n$-dimensional ball of center $x$ and radius $r$. Whenever the center is omitted it is assumed to be $0$.
The function $d(x,A)$ shall denote the Euclidean distance of the point $x\in \R^n$ to the set $A\subset \R^n$.
Moreover, we will use the negative part of a function, $u_-:=\max(-u,0)$.

When considering a set $A$, $\chi_A$ shall denote the characteristic function of $A$.
$\cH^{k}$ is the $k$-dimensional Hausdorff measure
and $\mathcal{L}^n$ the $n$-dimensional Lebesgue measure,
where we denote $\mathcal{L}^n(A) =: |A|$.
\section{Preliminaries} \label{sec:prelim}
\subsection{Examples}\label{sec:examples}

We would like here to provide some examples of solutions, in various generalized senses, to the Bernoulli problem. First, it may be useful to describe several possible notions of solution, alongside trivial one-dimensional examples which already demonstrate the differences between them (though they do not truly capture the much more delicate behavior of the free boundary in higher dimensions which makes the problem challenging). Below we examine possible solutions on an interval $[-1, 1]$.
\begin{itemize}
	\item Given two values $u(\pm 1)$, there is always a minimizer to \eqref{eq:altcafffunctional}. The minimizer is either the harmonic (linear) function $u(t) = \frac{1}{2}[u(1)(1 + t) + u(-1)(1 - t)]$, or possibly $u(t) = (u(-1) - 1 - t)_+ + (u(1) + t - 1)_+$ if $u(1) + u(-1) \leq 2$. Which of these is the minimizer depends on the parameters; for example, if $u(1) = u(-1) = a$, then the first is the only minimizer when $a > \frac{1}{2}$, the second is the only minimizer when $a < \frac{1}{2}$, and they are both minimizers if $a = \frac{1}{2}$. 
	Note that the class of minimizers is compact, i.e. limits of minimizers are again minimizers.
	\item A second notion is that of a \emph{weak}, \emph{distributional}, or \emph{outer-variation} solution. The exact meaning of this differs between authors in the higher-dimensional context (see, e.g., the weak solutions in \cite{AC}), but the basic premise is that it should satisfy, for any smooth compactly supported function $\eta$,
	\[
		E((u + t \eta)_+) = E(u) + o(t).
	\]
At $x \in \p \{u > 0\}$, in one dimension it is easy to see that $u(t) = (t - x)_+$ or $u(t) = (t - x)_-$ locally. The main problem with this property is that it is not closed under limits: the function $|t|$ is not a weak solution in this sense, but is a limit of the functions $(|t|-\e )_+$ which are.
	\item We may instead consider \emph{inner-variation}, \emph{domain-variation}, or simply \emph{variational} solutions, which instead have that for any smooth vector field $\xi \in C_c^\infty(\W)$, if we consider the flow $\phi_t$, then
	\begin{equation}\label{inner}
		E(u \circ \phi_t) = E(u) + o(t).
	\end{equation}
	Here we will separately also assume that $u$ is smooth when positive (which is not implied by \eqref{inner}). With this assumption, like the outer-variation property,
it follows that $u$ is harmonic when positive. According to the shape of $\{u > 0\}$, there are two cases: either $u$ is locally $u(t) = (t - x)_+$ or $u(t) = (t - x)_-$ as before, or $u(t) = \a |t - x|$ locally for some $\a > 0$. Unlike outer-variation solutions, these are closed under limits, but in exchange we are forced to settle for a seemingly very weak ``balancing'' condition at 
``multiplicity $2$ points'' where $u(t) = \a |t - x|$.
	\item An entirely different framework exists based on the maximum principle, called \emph{viscosity solutions}. While we will revisit these in Section \ref{sec:corollaries}, for the moment it is easiest to state the criterion for a one-dimensional $u$ which is linear when positive to be a viscosity solution. For $x \in \p \{u > 0\}$, $u$ must locally be either $u(t) = (t - x)_+$ or $u(t) = (t - x)_-$, or of the form $u(t) = \a (t - x)_+ + \b (t - x)_-$ with $\a, \b \in (0, 1]$. Viscosity solutions are also closed under taking limits, but the ``supersolution'' property $\a, \b \leq 1$ is  much weaker than the balancing condition of variational solutions, and in higher dimensions it becomes difficult to say anything about general viscosity solutions at points in $\p \{u > 0\}$ with Lebesgue density $1$ for $\{u > 0\}$.
\end{itemize}
Notice that there is little difference between these notions at points where the Lebesgue density of $\{u > 0\}$ is strictly less than $1$.
At points in $\Sigma^H$, however, the differences are considerable. Minimizers simply do not possess any such points, as was observed in \cite{AC}. This fact and the already mentioned
example of two smooth interfaces approaching each other and touching in a limit,
$(|x_n|-\epsilon)_+$, suggest an analogy to {\em multiplicity-$2$ points in minimal surfaces}.
Outer-variation solutions appear to exclude $\Sigma^H$-points entirely (this depends on the specific notion), leading to obvious instability under limits. 
In fact, as shown by the last example of this subsection, 
the ``natural'' condition that $u_{\pm \nu} = 1$ on each side, i.e. $u(t) = |t - x|$ locally, is 
not preserved when passing to limits. 
The variational solutions require a weaker balancing condition which is enough to obtain monotonicity of local energies similar to the minimizer case, and the purpose of this paper is to show that they are fairly regular and stable under limits in a strong sense. 
We conjecture that there are examples of viscosity solutions such that
the topological free boundary is not a set of $\sigma$-finite $\cH^{n-1}$-measure.

Turning now to variational solutions (see the formal definition in Section \ref{sec:prelimvar} below), we present some simple examples to illustrate what kind of structure may be possible for $\Sigma^H$.
\begin{itemize}
	\item If $\chi_{\{u > 0\}} = 1$ a.e., then so does $\chi$, and the variational identity simplifies to the one for the Dirichlet energy alone:
	\begin{equation}\label{eq:variationalformulaharmonic}
		\int \left(|\n u|^2 \dvg \xi - 2 \n u \cdot D \xi \n u\right) = 0.
	\end{equation}
	Any harmonic function, of course, satisfies this identity. Given a sign-changing harmonic function $v$, though, so does $u = |v|$, as $\n u = \pm \n v$ almost everywhere. This class of solutions already shows that Theorem \ref{thm:main}
implies strong nodal set estimates for harmonic functions \cite{L, NV2}, and gives examples where the degenerate singular set $\Sigma_z$ is nonempty (like $u = |\Re (z^2)|$ on $\C = \R^2$). Such examples suggest that $\Sigma_z$ should be of codimension two, but we leave this point to future research.
	\item Given any harmonic function $v$ and number $a$, $u = |v| + a$ satisfies \eqref{eq:variationalformulaharmonic} (and many other examples of this general type are possible). However, these are not variational solutions, as they are not in $C^2(\{u > 0\})$. The assumption 
$u\in C^2(\{u > 0\})$ guarantees that $u$ is harmonic on $\{u > 0\}$, which otherwise does not follow from \eqref{eq:variationalformulaharmonic} alone; we do not attempt to find the minimal assumptions required for this here.
	\item There are other solutions to \eqref{eq:variationalformulaharmonic} not of this type. For example, take $u(z) = |\Re (z^\a)|$; this is a variational solution for $\a = \frac{n}{2}$, $n \in \N$ with $n \geq 2$ ($n = 1$ is excluded by the assumption that $u$ must be Lipschitz). However, $\Re z^\a$ is harmonic only for $n$ even; the odd $n$ instead come from two-valued harmonic functions. The theory of multivalued harmonic functions is quite subtle and has received considerable attention recently (see \cite{DNSV} as well as the references therein). Indeed, our work was partly motivated by the approach in \cite{DNSV}.
	\item Explicit examples including both regular and singular points 
include for example {\em cusp} singularities such as the two-dimensional example (see \cite{AC})
$$u(x)=\max(-\log(|x-e_1|),0)+\max(-\log(|x+e_1|),0)$$
in the domain $\Omega = \R^2\setminus \left(B_{1/2}(e_1)\cup B_{1/2}(-e_1)\right)$.
	\item In \cite{BSS}, a family of periodic (classical) solutions in the plane is constructed, for which $\{u > 0\}$ consists of the complement of $\bigcup_{j \in \Z}(j e_1 + K)$, where $K$ is a closed convex set symmetric about both axes and contained in $\{|x_1| < \frac{1}{2} \}$. All solutions of this type consist of a single one-parameter family $u = u_\alpha$, where the parameter $\alpha \in (0, 1)$ is half the perimeter of $K$ (i.e. for each half-perimeter $\alpha \in (0, 1)$ there is a unique solution, while for $\alpha \geq 1$ there are no solutions), and these solutions satisfy $|\nabla u| < 1$ on $\{u > 0\}$ (see \cite{Traizet}). Fix a value of $\alpha$, and consider the blow-down limit $u_R = u(R \cdot)/R$, as $R \rightarrow \infty$: it is straightforward to check using Liouville's theorem that $u_R \rightarrow q |x_2|$ for some $q \in [0, 1]$. Moreover, $q = \alpha$: indeed, integrating $\Delta u$ over the region $A = ([-\frac{1}{2}, \frac{1}{2}] \times [0, R])\setminus K$ and applying the divergence theorem gives
	\[
		0 = \int_A \Delta u = \int_{\partial K \cap \{x_2 > 0\}} u_\nu + \int_{[-\frac{1}{2}, \frac{1}{2}] \times \{R\}} u_n \rightarrow - \alpha + q.
	\]
	In particular, this implies that the variational solutions $\alpha |x_2|$ are for each $\alpha \in (0,1)$ explicitly obtainable as limits of entire classical solutions, and so the conclusion of Theorem \ref{thm:compact_intro} cannot be improved to read $\a(n)\sqrt{H_x(u; 0+)} = 1$ instead. We note here that Traizet \cite{Traizet} proved that periodic solutions of this type are in bijective correspondence with specific types of minimal surfaces, and derived even stronger rigidity theorems based on this property.
\end{itemize}
\subsection{Variational solutions}\label{sec:prelimvar}

Given an open set $\W \ss \R^n$, we say that a pair of functions $(u, \chi)$ with $u : \Omega \rightarrow [0, \infty)$ and $\chi : \Omega \rightarrow \{0, 1\}$ is \emph{a variational solution} (with constant $C_V$) if the following properties hold:
\begin{enumerate}
	\item $u \in C^0(\W)\cap C^2(\{u > 0\})$ with $|\nabla u|\leq C_V$ on $\W$.
	\item $\chi$ is a Borel measurable function.
	\item $\chi_{\{u > 0\}} \leq \chi$ $\cL^n$-a.e. on $\W$.
	\item For each vector field $\xi \in C^{0, 1}_c(\W; \R^n)$, 
	\begin{equation}\label{eq:variationalformula}
		\int \left((|\nabla u|^2 + \chi) \dvg \xi - 2 \nabla u \cdot D\xi \nabla u \right)= 0.
	\end{equation}
\end{enumerate}
\begin{remark}\label{rem:lip}
	\begin{enumerate}
		\item The assumption $u \in C^2(\{u > 0\})$	can definitely be reduced to some milder assumption.
		\item We conjecture that the Lipschitz assumption cannot be omitted,
		that is, there are examples of solutions of \eqref{eq:variationalformula}
		which are not Lipschitz continuous.
	\end{enumerate}
\end{remark}

Variational solutions have a simple scaling property: if $(u, \chi)$ is a variational solution on $B_r(x)$, then setting
\begin{equation} \label{eq:onehomrescale}
	\begin{cases}
		u_{x, r}(y) &= \frac{u(x + r y)}{r}, \\
		\chi_{x, r}(y) &= \chi(x + r y)
	\end{cases}
\end{equation}
implies that $(u_{x, r}, \chi_{x, r})$ is a variational solution on $B_1$ (with the same constant). We will refer to such a change of variables as a \emph{one-homogeneous rescaling} below.

\begin{proposition}\label{prop:subharmonic}
	Let $(u, \chi)$ be a variational solution on $\W$. Then $u$ is subharmonic on $\W$, and harmonic on $\{u > 0\}$.
\end{proposition}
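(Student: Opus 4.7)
The plan is to establish harmonicity on $\{u>0\}$ and global subharmonicity by separate arguments.

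For the interior harmonicity, I would test the variational identity \eqref{eq:variationalformula} against vector fields $\xi \in C^{0,1}_c(\{u>0\};\R^n)$. On the support of any such $\xi$ the requirement $\chi \geq \chi_{\{u>0\}}$ forces $\chi = 1$ a.e., so the $\chi\dvg\xi$ contribution integrates to $\int \dvg\xi = 0$ by the divergence theorem. What remains is the classical stress-energy identity for the Dirichlet integral; since $u \in C^2(\{u>0\})$, a routine integration by parts on the open set $\{u>0\}$ (using $2\p_j(\p_i u\,\p_j u) = \p_i|\nabla u|^2 + 2\p_i u\,\Delta u$) collapses it to
\[
2\int (\xi \cdot \nabla u)\,\Delta u\,dx = 0.
\]
By varying $\xi$ this pins down $(\nabla u)\,\Delta u \equiv 0$ pointwise on $\{u>0\}$. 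To upgrade this to $\Delta u \equiv 0$, I would note that the (open) set $V := \{x \in \{u>0\}: \Delta u(x) \neq 0\}$ is contained in $\{\nabla u = 0\}$; on any ball $B \subset V$, $u$ is locally constant, so $\Delta u = 0$ on $B$, contradicting $B \subset V$ unless $V = \emptyset$.

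For the global subharmonicity the cleanest route is a comparison argument. Fix an arbitrary ball $B \cc \W$ and let $h$ be the harmonic function in $B$ matching $u$ on $\p B$; $h \geq 0$ in $B$ by the maximum principle. On $U := \{u>0\} \cap B$ both $u$ and $h$ are harmonic, and along $\p U$ one has either $u = h$ (on $\p B \cap \p U$) or $u = 0 \leq h$ (on $\p\{u>0\}\cap B$); the weak maximum principle therefore yields $u \leq h$ on $U$. Since $u = 0 \leq h$ on the remainder of $B$, we conclude $u \leq h$ throughout $B$. The resulting sub-mean-value property at every ball $B \cc \W$ is equivalent to $\Delta u \geq 0$ in the sense of distributions.

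The main subtlety to watch is the interplay between the two levels of regularity: harmonicity is a pointwise statement on the open set $\{u>0\}$, while subharmonicity is a global distributional statement on $\W$ whose nontrivial content comes from the jump of $\nabla u$ across the topological free boundary $\p\{u>0\}$. The comparison argument sidesteps any attempt to evaluate $\Delta u$ at boundary points directly, which is precisely the information Theorem~\ref{thm:intro}~(iii) is later designed to quantify.
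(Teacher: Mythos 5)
Your proof is correct. The harmonicity argument is essentially the same as the paper's: test \eqref{eq:variationalformula} with $\xi$ supported in $\{u>0\}$, integrate by parts using $u\in C^2(\{u>0\})$ to obtain $\int (\xi\cdot\nabla u)\,\Delta u = 0$, conclude $\nabla u\,\Delta u\equiv 0$, and then dispose of the set where $\nabla u=0$; the paper does this by observing $\Delta u=0$ extends continuously from $\{u>0\}\cap\overline{\{\nabla u\neq 0\}}$ to all of $\{u>0\}$, while you use the equivalent contradiction argument that an open component of $\{\Delta u\neq 0\}$ would lie inside $\{\nabla u=0\}$, forcing $u$ constant there. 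For subharmonicity the paper merely remarks that one can verify the viscosity subsolution property (which at a free boundary point $x_0$ amounts to observing that a test function touching $u\geq 0$ from above with $\phi(x_0)=u(x_0)=0$ has a local minimum at $x_0$, hence $\Delta\phi(x_0)\geq 0$), whereas you carry out the classical comparison-with-harmonic-replacement argument directly. Both routes are elementary and standard; yours has the small advantage of producing the sub-mean-value inequality immediately without invoking the viscosity framework, while the paper's hint is shorter to state given the groundwork already laid. No gaps.
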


\begin{proof}
	Take $\xi \in C_c^\infty(\{u > 0\}; \R^n)$. Then $ \chi \geq \chi_{\{u > 0\}} = 1$ a.e. on the support of $\xi$, so integrating by parts gives
	\[
		\int \left(|\nabla u|^2 \dvg \xi - 2 \nabla u \cdot D \xi \nabla u\right) = 0.
	\]
	As $u\in C^2(\Omega\cap \{u>0\})$, 
integration by parts yields 
that 
$\int  \xi \cdot \nabla u \> \Delta u= 0$, which in turn gives that 
$u$ is harmonic in $\{u>0\} \cap \{\nabla u \neq 0\}$. Hence $\Delta u = 0$ on the closure $\{u>0\} \cap \overline{\{\nabla u \neq 0\}}$, and so on $\{u > 0\}$, using that $u \in C^2(\{u > 0\})$ by assumption. It then follows that 
the continuous function
$u$ is subharmonic on $\W$, for example by verifying it is subharmonic in the viscosity sense.
\end{proof}

\begin{lemma}\label{lem:BV}
	Let $(u, \chi)$ be a variational solution on $\W$, and $B_{2r}(x) \cc \W$. Then $\chi \in BV(B_r(x))$, and
	\[
		\int_{B_r(x)} |\n \chi| \leq C r^{n-1},
\text{ where the constant }C \text{ depends only on }C_V.
	\]
\end{lemma}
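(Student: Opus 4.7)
The plan is to derive a distributional formula for $\nabla \chi$ directly from the variational identity \eqref{eq:variationalformula} and then bound it against the (positive) measure $\Delta u$. Testing \eqref{eq:variationalformula} with $\xi = \eta e_k$ for $\eta \in C^1_c(\W)$ and rearranging yields, for each $k \in \{1, \dots, n\}$,
\[
	\int \chi \, \p_k \eta = \int V_k \cdot \nabla \eta, \qquad V_k := 2 \p_k u\, \nabla u - |\nabla u|^2 e_k \in L^\infty(\W).
\]
A direct computation, valid whenever $u$ is smooth, yields $\dvg V_k = 2\, \p_k u\, \Delta u$, the cross terms $\p_i(\p_k u\, \p_i u)$ cancelling against $\tfrac{1}{2}\p_k|\nabla u|^2$. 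Combined with $|\p_k u| \le C_V$ and the positivity of $\Delta u$ (Proposition \ref{prop:subharmonic}), this formally gives $|\p_k \chi| \le 2 C_V\, \Delta u$ as measures, which is the bound to target.

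First I would establish $\Delta u(B_{3r/2}(x)) \le C(n, C_V) r^{n-1}$. Choose $\phi \in C_c^\infty(B_{2r}(x))$ with $\phi \equiv 1$ on $B_{3r/2}(x)$, $0 \le \phi \le 1$, and $|\nabla \phi| \le C/r$. Since $u \in W^{1,\infty}(B_{2r}(x))$ and $\Delta u \geq 0$ is a Radon measure, integration by parts (valid for $\phi \in C_c^\infty$, $u \in W^{1,1}_{\text{loc}}$) gives
\[
	\Delta u(B_{3r/2}(x)) \le \int \phi\, d\Delta u = -\int \nabla u \cdot \nabla \phi\, dy \le C_V \|\nabla \phi\|_{L^1} \le C(n, C_V)\, r^{n-1}.
\]

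To turn the formal divergence identity into a rigorous bound I would mollify: let $u_\e := u * \rho_\e$ and $V_k^\e := 2\, \p_k u_\e\, \nabla u_\e - |\nabla u_\e|^2 e_k$. Since $u_\e$ is smooth, the computation above gives $\dvg V_k^\e = 2\, \p_k u_\e\, \Delta u_\e$ pointwise, and $\Delta u_\e = \Delta u * \rho_\e \ge 0$. For $\eta \in C^1_c(B_r(x))$ with $\|\eta\|_\infty \le 1$ and $\e < r/2$, integration by parts and the previous step give
\[
	\left|\int V_k^\e \cdot \nabla \eta\right| = \left|2 \int \eta\, \p_k u_\e\, \Delta u_\e\right| \le 2 C_V \int_{B_r(x)} \Delta u_\e \le 2 C_V\, \Delta u(B_{3r/2}(x)) \le C(n, C_V)\, r^{n-1}.
\]
Since $\nabla u_\e \to \nabla u$ $\cL^n$-a.e. with uniform $L^\infty$ bound $C_V$, dominated convergence yields $\int V_k^\e \cdot \nabla \eta \to \int V_k \cdot \nabla \eta = \int \chi\, \p_k \eta$, giving $|\int \chi\, \p_k \eta| \le C(n, C_V)\, r^{n-1} \|\eta\|_\infty$ for all such $\eta$ and each $k$. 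This is the desired BV bound.

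The only genuine subtlety is interpreting the product $\p_k u \cdot \Delta u$: $\nabla u$ is only defined $\cL^n$-a.e.\ while $\Delta u$ is supported on the potentially small set $\p\{u > 0\}$, so the pointwise product is not directly meaningful. The mollification above sidesteps this entirely by performing the cancellation of the cross terms at the smooth level, so that no product of a singular measure with a merely measurable function is ever formed; all one needs to pass to the limit is the uniform $L^\infty$ bound on $\nabla u$.
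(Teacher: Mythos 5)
Your proof is correct and is essentially the same as the paper's: both establish the $\Delta u(B_{3r/2}(x)) \le C r^{n-1}$ bound via a cutoff, mollify $u$, use the divergence identity $\dvg\left(2\p_k u_\e \nabla u_\e - |\nabla u_\e|^2 e_k\right) = 2\p_k u_\e \Delta u_\e$ to convert the quadratic gradient expression into a term involving $\Delta u_\e$, and then bound it using $|\nabla u_\e|\le C_V$ and the measure bound. The only cosmetic difference is that you work componentwise with $\xi = \eta e_k$ while the paper keeps a general vector field $\xi$.
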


\begin{proof}
	We have the following elementary estimate on the Laplacian of $u$ (recall that this is represented by a non-negative Borel measure): for a cutoff function $\eta$ such that $\eta\equiv 1$ on $B_{3r/2}(x)$ and $\eta\equiv 0$ outside of $B_{2r}(x)$,
	\[
		\Delta u(B_{3r/2}(x)) \leq \int \eta \> d\Delta u = - \int \n u \cdot \n \eta \leq C r^{n-1}.
	\]
	Let $\phi_t$ be a standard mollifier, and $u_t = \phi_t * u$. Then $u$ is smooth, and we have that from \eqref{eq:variationalformula},
	\[
		\lim_{t } \int \left(|\n u_t|^2 \dvg \xi - 2 \n u_t \cdot D\xi \> \n u_t\right) = \lim_{t \to 0+} \int \left(|\n u|^2 \dvg \xi - 2 \n u \cdot D\xi \n u\right) = - \int \chi \dvg \xi
	\]
	for every $\xi \in C_c^{0, 1}(B_{3r/2}(x); \R^n)$. We can rewrite this using integration by parts:
	\begin{align*}
		\int \left(|\n u_t|^2 \dvg \xi - 2 \n u_t \cdot D\xi \> \n u_t\right) & = \int \left( \dvg(|\n u_t|^2 \xi - 2 \n u_t \cdot \xi \> \n u_t) + 2 \n u_t \cdot \xi \> \Delta u_t\right) 
\\& = 2 \int \n u_t \cdot \xi \> \Delta u_t.
	\end{align*}
	We know that $|\n u_t|\leq C_V$, so
	\[
		\left|\int \left(|\n u_t|^2 \dvg \xi - 2 \n u_t \cdot \dvg \xi \> \n u_t\right)\right| \leq C(C_V) \sup |\xi| r^{n-1}.
	\]
	In particular,
	\[
		\left|\int \chi \dvg \xi\right| \leq C(C_V) \sup |\xi| r^{n-1},
	\]
	which implies that $\chi \in BV(B_r(x))$ and also the estimate.
\end{proof}

\subsection{The monotonicity formula}\label{sec:monotone}

Given a variational solution on $\W$ with $B_r(x)\ss \W$, we set
\[
	D_x(u; r) = \frac{1}{r^n}\int_{B_r(x)} \left(|\nabla u|^2 + \chi\right)
\]
and
\[
	H_x(u; r) = \frac{1}{r^{n+1}}\int_{\partial B_r(x)} u^2\> d\mathcal{H}^{n-1}.
\]
Define the \emph{monotonicity formula} to be the difference,
\[
	M_x(u; r) = D_x(u; r) - H_x(u; r).
\]
When there is no ambiguity we will drop the parameter $u$, using e.g. $M_x(r) = M_x(u; r)$.

Both $H$ and $D$ are invariant under the one-homogeneous rescaling \eqref{eq:onehomrescale}, in the sense that $H_0(u_{x, r}; t) = H_x(u; t r)$ (and similarly for $D$, and so the monotonicity formula $M$).
\begin{proposition}[{\cite[Theorem 3.1 and Theorem 4.1]{W98}}]\label{prop:monotone}
	Let $(u, \chi)$ be a variational solution on $B_r(x)$. Then for $s \in (0, r)$, $M_x(s)$ is an absolutely continuous, nondecreasing function of $r$, with
	\[
		\frac{d}{dr}M_x(s) = \frac{2}{s^{n+2}}\int_{\partial B_s(x)} (u(y) - (y - x) \cdot \nabla u(y))^2 \> d\cH^{n-1}(y)
	\]
	for almost every $s$.
The limit $M_x(0+) = \lim_{r \to 0} M_x(r) \in [-\infty, \infty)$ exists, and for each open
$D\subset\subset \R^n$, $(u_{x,r})_{r\in (0,\delta(D))}$ is bounded in $W^{1,2}(D)$
and each limit with respect to a subsequence is a homogeneous function of degree $1$. 
\end{proposition}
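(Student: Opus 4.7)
The plan is to follow the standard Weiss-type computation: convert the domain-variation identity \eqref{eq:variationalformula} into a Pohozaev-type relation via radial cutoff vector fields, differentiate $D_x$ and $H_x$ directly in $r$, combine using integration by parts on the harmonic region, and complete the square. Monotonicity then yields the limit $M_x(0+)$, and a rescaling argument combined with weak lower semicontinuity on annuli extracts $1$-homogeneity of blow-ups.

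First, I would test \eqref{eq:variationalformula} with $\xi_\epsilon(y) = (y - x)\eta_\epsilon(|y - x|)$, where $\eta_\epsilon$ smoothly approximates $\chi_{[0, r]}$. Direct computation gives $\dvg \xi_\epsilon = n\eta_\epsilon + |y-x|\eta_\epsilon'(|y-x|)$ and $\nabla u \cdot D\xi_\epsilon \nabla u = \eta_\epsilon|\nabla u|^2 + \eta_\epsilon'(|y-x|)\bigl((y-x)\cdot \nabla u\bigr)^2/|y-x|$; sending $\epsilon \to 0$ via the co-area formula produces, for a.e.\ $r$, the Pohozaev-type identity
\[
\int_{B_r(x)}\bigl[(n-2)|\nabla u|^2 + n\chi\bigr] = r\int_{\partial B_r(x)}\bigl[|\nabla u|^2 + \chi - 2(\nu\cdot \nabla u)^2\bigr]\, d\mathcal{H}^{n-1}.
\]
By co-area again, $\frac{d}{dr}D_x(r) = -\tfrac{n}{r}D_x(r) + r^{-n}\int_{\partial B_r}(|\nabla u|^2 + \chi)$; substituting the Pohozaev identity yields $r^{n+1}\frac{d}{dr}D_x(r) = 2r\int_{\partial B_r}(\nu\cdot \nabla u)^2\, d\mathcal{H}^{n-1} - 2\int_{B_r}|\nabla u|^2$. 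Since $u$ is harmonic on $\{u > 0\}$ (Proposition \ref{prop:subharmonic}) and vanishes on the support of $\Delta u$ (where $u\,d(\Delta u) = 0$ as a finite signed measure), integration by parts gives $\int_{B_r}|\nabla u|^2 = \int_{\partial B_r}u\,(\nu\cdot \nabla u)\, d\mathcal{H}^{n-1}$. For $H_x$, rewriting as $H_x(r) = r^{-2}\int_{\partial B_1}u(x + r\omega)^2\, d\mathcal{H}^{n-1}(\omega)$ and differentiating directly gives $r^{n+1}\frac{d}{dr}H_x(r) = -\tfrac{2}{r}\int_{\partial B_r}u^2 + 2\int_{\partial B_r}u\,(\nu\cdot\nabla u)$. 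Subtracting and completing the square using $y - x = r\nu$ on $\partial B_r(x)$ yields the asserted formula for $\frac{d}{dr}M_x$, which is manifestly non-negative. Absolute continuity of $D_x$ and $H_x$ follows from co-area and the Lipschitz dependence of $u(x + r\omega)^2$ on $r$; monotonicity and the upper bound $M_x \leq D_x \leq C(C_V)$ then place $M_x(0+)$ in $[-\infty, \infty)$.

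For the blow-up assertion, which is the nontrivial case when $x$ belongs to the closure of $\{u = 0\}$, the Lipschitz bound gives $|u_{x,r}(y)| = |u(x + ry) - u(x)|/r \leq C_V|y|$ and $|\nabla u_{x,r}(y)| \leq C_V$, so $(u_{x,r})$ is uniformly bounded in $W^{1,\infty}_{\mathrm{loc}}(\R^n)$, hence in $W^{1,2}(D)$ for any $D\subset\subset \R^n$ and $r < \delta(D)$ small enough. To show any subsequential limit $u_0$ of $(u_{x, r_k})$ is $1$-homogeneous, I would use scale invariance $M_0(u_{x, r}; s) = M_x(u; rs)$ to obtain, for $0 < a < b$,
\[
\int_a^b \frac{1}{t^{n+2}}\int_{\partial B_t}\bigl(u_{x, r_k} - y\cdot \nabla u_{x, r_k}\bigr)^2\, d\mathcal{H}^{n-1}\, dt = M_x(u; r_k b) - M_x(u; r_k a) \to 0.
\]
Rewriting the left side as an integral over the annulus $B_b \setminus \overline{B_a}$ against the bounded weight $|y|^{-(n+2)}$, weak lower semicontinuity of the convex quadratic $(v - y\cdot \nabla v)^2$ under weak $W^{1,2}$ convergence gives $u_0 - y\cdot \nabla u_0 = 0$ a.e.\ on each such annulus; exhausting $\R^n\setminus\{0\}$ by countably many annuli yields $1$-homogeneity.

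The principal obstacle I anticipate is this final passage to the limit: the weight $|y|^{-(n+2)}$ appearing in the Weiss formula is too singular at the origin for weak convergence to yield lower semicontinuity globally, so the argument must localize to annuli where the weight is bounded and then exhaust. Some minor care is also needed in the Pohozaev derivation to approximate the radial cutoff within the allowed test-function class $C_c^{0,1}(\W; \R^n)$ and to justify differentiability of the surface integrals in $r$ via co-area, and in the integration by parts step to control the boundary of the approximating domains $\{u > \tau\}$ as $\tau \to 0$.
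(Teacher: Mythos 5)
Your computation is correct and follows essentially the same route as the paper's proof: radial cutoff fields in the domain-variation identity to obtain the Rellich--Pohozaev relation, direct differentiation of $D_x$ and $H_x$, and the integration-by-parts identity $\int_{B_r}|\n u|^2 = \int_{\p B_r} u\, u_\nu$ to complete the square. The only cosmetic difference is that you justify this last identity via ``$u$ vanishes on $\supp \Delta u$'' while the paper uses the $(u-\e)_+$ regularization, and you spell out the blow-up homogeneity argument that the paper cites from \cite{W98}.
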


\begin{proof}
As some of the identities of the proof will be used later, we will give the proof of the monotonicity.

	It is straightforward to check that both of $D_x, H_x$ are Lipschitz in $s$, using only that $u$ is a Lipschitz function; in particular, they are absolutely continuous.
	
	We compute the derivatives (assuming, after a translation, that $x = 0$), by first using $ \xi(y) = y \eta(|y|)$ in the definition of variational solution, where $\eta \in C^{0, 1}([0, r))$, $\eta(r) = 0$ is a cutoff function to be selected below. This leads to
	\[
		0 = \int \left((|\n u|^2 + \chi) (n \eta(|y|) + |y| \eta'(|y|)) - 2 \n u \cdot [ \eta(|y|)\n u + \frac{y}{|y|} \eta'(|y|) y \cdot \n u]\right).
	\]
	Select a sequence of $\eta_k$ with $\eta_k(t) = 1$ for  $t \in [0, s]$, $\eta_k(t) = 0$ for $t > s + 1/k$, and $\eta_k$ linear in between. Then
	\[
		\int_{B_s} (n - 2) (|\n u|^2 + \chi) = k \int_s^{s + 1/k}\int_{S^{n-1}}\left( t (|\n u|^2 + \chi) - 2 t (\n u \cdot \w)^2 \right)d\w \> t^{n-1}dt + O(k^{-1}),
	\]
	and at every Lebesgue point of $s \mapsto \int_{\partial B_s} \left(|\n u|^2 + \chi - 2 (\n u \cdot \frac{y}{|y|})^2 \right)\> d\mathcal{H}^{n-1}(y)$ the integral on the right passes to its limiting value, leading to
	\begin{equation}\label{eq:rellichpohozaev}
		(n - 2)\int_{B_s} |\n u|^2 + n\int_{B_s} \chi = s \int_{\partial B_s}|\n u|^2 + \chi - 2 (\n u \cdot \frac{y}{|y|})^2 \> d\cH^{n-1}(y)
	\end{equation}
	for almost every $s$. On the other hand, we have that
	\[
		\int_{B_s}|\n u|^2 = \lim_{\e\to 0+}\int_{B_s}\n u \cdot \n (u - \epsilon)_+ = \lim_{\e\to 0+} \int_{\partial B_s} (u - \epsilon)_+ \n u \cdot \frac{y}{|y|}\> d\cH^{n-1}(y)
	\]
	using the dominated convergence theorem and integrating by parts (for a sequence of $\e$ for which $\partial \{u > \epsilon\}$ is smooth). Passing to the limit leads to
	\begin{equation}\label{eq:intbyparts}
		\int_{B_s}|\n u|^2 = \int_{\partial B_s} u \n u \cdot \frac{y}{|y|}\> d\cH^{n-1}(y)
	\end{equation}
	for almost every $s$. Plugging this into \eqref{eq:rellichpohozaev} then gives
	\begin{equation}\label{eq:Didentity}
		n D_0(s) = \frac{1}{s^{n-1}} \int_{\partial B_s}\left(|\n u|^2 + \chi - 2 (\n u \cdot \frac{y}{s})^2 + 2 u \n u \cdot \frac{y}{s^2}\right)\> d\cH^{n-1}(y)
	\end{equation}

	Applying Fubini's theorem in spherical coordinates to $s^n D_0(s)$, we see that
$D_0$ is absolutely continuous. Differentiating $D_0(s)$ gives, at almost every $s$,
	\begin{align*}
		\frac{d}{ds}D_0(s) &= \frac{1}{s^n} \int_{\partial B_s} \left(|\nabla u|^2 + \chi\right)\> d\mathcal{H}^{n-1} - \frac{n}{s} D_0(s) \\
		& = \frac{2}{s^{n + 2}} \int_{\partial B_s}\left((\n u \cdot y)^2 - u \n u \cdot y \right)\> d\cH^{n-1}(y)
	\end{align*}
	by using \eqref{eq:Didentity}.

	Differentiating $H_0(s)$ gives
	\begin{align*}
		\frac{d}{ds}H_0(s) &= \frac{n - 1}{s}H_0(s) - \frac{n + 1}{s}H_0(s) + \frac{2}{s^{n+1}}\int_{\partial B_s} u \n u \cdot \frac{y}{|y|}d \cH^{n-1}(y) \\
		&= -\frac{2}{s} H_0(s) + \frac{2}{s^{n+2}}\int_{\partial B_s} u \n u \cdot yd \cH^{n-1}(y).
	\end{align*}
	Putting the two together leads to
	\[
		\frac{d}{ds}M_0(s) = \frac{2}{s^{n+2}} \int_{\partial B_s} ( u - \n u \cdot y)^2 \> d\cH^{n-1}(y) \geq 0.
	\]
\end{proof}
 
\begin{proposition}\label{prop:Mlimits}
	Let $(u, \chi)$ be a variational solution on $\W$, and $x\in \W$. Then $M_x(u; 0+) = - \infty$ if and only if $u(x) > 0$. If $u(x) = 0$, then we have
	\begin{equation}\label{eq:Mlimit}
		M_x(u; 0+) = \lim_{r \to 0} \frac{|\{x \in B_r(x) : \chi(x) = 1\}|}{r^n} \in [0, |B_1|]
	\end{equation}
The limit on the right existing is part of the conclusion here.
Moreover, the function $x\mapsto M_x(u; 0+)$ is upper semicontinuous on $\W$.
\end{proposition}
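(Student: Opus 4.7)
I handle the three claims in order. For the \emph{dichotomy}, if $u(x) > 0$ then continuity gives $u \geq \delta > 0$ on some $B_\rho(x)$, so $H_x(u;r) \geq \delta^2 n\omega_n/r^2 \to \infty$ while $D_x(u;r) \leq (C_V^2 + 1)|B_1|$ by the Lipschitz assumption; hence $M_x(u;r) \to -\infty$. If instead $u(x) = 0$, then $u(y) \leq C_V|y-x|$ gives $H_x(u;r) \leq C_V^2 n\omega_n$, so with the bound on $D_x$ and Proposition~\ref{prop:monotone} the limit $M_x(u;0+) \in \R$ is finite. The \emph{upper semicontinuity} follows immediately: monotonicity gives $M_x(u;0+) = \inf_{r>0} M_x(u;r)$, and for each fixed $r > 0$ the map $y \mapsto M_y(u;r)$ is continuous (both $D_y(u;r)$ and $H_y(u;r)$ depend continuously on $y$), so the infimum is upper semicontinuous.

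The core of the argument is the \emph{density identity}. Setting $\theta(r) := |B_r(x) \cap \{\chi = 1\}|/r^n = D_x(u;r) - I_x(r)$ where $I_x(r) := r^{-n}\int_{B_r(x)}|\nabla u|^2$, the integration-by-parts identity \eqref{eq:intbyparts} together with $D_x = M_x + H_x$ gives
\[
\theta(r) = M_x(u;r) + \frac{1}{r^{n+1}}\int_{\partial B_r(x)} u\bigl(u - \nabla u \cdot (y-x)\bigr)\, d\cH^{n-1},
\]
and Cauchy-Schwarz combined with the derivative formula $M_x'(r) = \frac{2}{r^{n+2}}\int_{\partial B_r(x)}(u - \nabla u \cdot (y-x))^2$ yields $|\theta(r) - M_x(u;r)|^2 \leq H_x(u;r) \cdot \tfrac{r}{2}M_x'(r)$. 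Since $M_x'$ is integrable this delivers convergence of $\theta - M_x$ to $0$ only on average, and pointwise convergence requires a blow-up argument.

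Fix any sequence $r_k \to 0$. By equi-Lipschitz bounds and Arzel\`a-Ascoli, along a subsequence $u_{x,r_k} \to u_0$ uniformly on $\overline{B_1}$ and weakly in $W^{1,2}(B_1)$; Lemma~\ref{lem:BV} gives $\chi_{x,r_k} \to \chi_{E_0}$ in $L^1(B_1)$ along a further subsequence. Proposition~\ref{prop:monotone} forces $u_0$ to be $1$-homogeneous, and $u_0$ is harmonic on $\{u_0 > 0\}$ by local uniform convergence of harmonic functions on compact subsets of the positivity set. For such a $u_0$, integration by parts together with Euler's identity $\nabla u_0 \cdot y = u_0$ on $\partial B_1$ gives $\int_{B_1}|\nabla u_0|^2 = \int_{\partial B_1} u_0^2$. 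Passing to the limit in
\[
M_x(u;r_k) = \int_{B_1}|\nabla u_{x,r_k}|^2 + \int_{B_1}\chi_{x,r_k} - \int_{\partial B_1} u_{x,r_k}^2\, d\cH^{n-1},
\]
three terms have limits: $M_x(u;r_k) \to M_x(u;0+)$, $\int_{B_1}\chi_{x,r_k} \to |E_0|$ by $L^1$-convergence, and $\int_{\partial B_1}u_{x,r_k}^2 \to \int_{\partial B_1}u_0^2$ by uniform convergence. Consequently the Dirichlet integral converges to some $A \geq \int_{B_1}|\nabla u_0|^2$ by lower semicontinuity. Upgrading this to strong $W^{1,2}$ convergence yields $A = \int_{\partial B_1}u_0^2$ and hence $\theta(r_k) \to M_x(u;0+) = |E_0|$; since this value is independent of the subsequence, the full limit $\lim_{r\to 0}\theta(r) = M_x(u;0+)$ exists, and the bound $\theta(r) \leq |B_1|$ places it in $[0, |B_1|]$.

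The main obstacle is the strong $W^{1,2}$-convergence of the rescalings, since lower semicontinuity only produces one of the two required inequalities and the reverse does not follow from monotonicity of $M_x$ alone. I expect to close this gap either by passing carefully to the limit in the Rellich-Pohozaev identity \eqref{eq:rellichpohozaev} at radii where the rescaled Weiss error $\int_{\partial B_1}(u_{x,r_k} - \nabla u_{x,r_k}\cdot z)^2 = \tfrac{r_k}{2}M_x'(r_k)$ vanishes (available along well-chosen subsequences by Fubini), or by leveraging that $M_x(u;r_k s) \to M_x(u;0+)$ for every $s \in (0,1]$ together with the $1$-homogeneity of $u_0$ to identify the Dirichlet integral on a family of nested balls and let $s \uparrow 1$.
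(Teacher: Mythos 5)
Your dichotomy and upper-semicontinuity arguments are correct; the latter is a cleaner packaging of the paper's computation, writing $M_x(u;0+)=\inf_{r>0}M_x(u;r)$ as an infimum over $r$ of functions continuous in $x$. The overall scaffolding of your density-identity argument also matches the paper's: blow-up along a subsequence, identify $u_0$ as $1$-homogeneous and harmonic where positive, observe $\int_{B_1}|\nabla u_0|^2=\int_{\partial B_1}u_0^2$, and pass to the limit in $M_x(u;r_k)=\int_{B_1}|\nabla u_{x,r_k}|^2+\int_{B_1}\chi_{x,r_k}-\int_{\partial B_1}u_{x,r_k}^2$.

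However, the gap you flag at the end is a real gap, and neither of the two remedies you sketch is what the paper uses; I am also skeptical either one works as stated. The Rellich--Pohozaev identity \eqref{eq:rellichpohozaev} brings in the boundary term $\int_{\partial B_s}|\nabla u_{x,r_k}|^2\,d\cH^{n-1}$, which you have no way to control, so forcing the Weiss error to vanish along good radii does not pin down the volume Dirichlet integral. Your second remedy is circular: relating $\lim_k t^{-n}\int_{B_t}|\nabla u_{x,r_k}|^2$ at different $t$ to the homogeneity of $u_0$ already presupposes the strong convergence you want.

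The paper's actual mechanism is short and sidesteps the issue entirely by linearizing. Since $u_{r_k}$ and $u_0$ are harmonic on their positivity sets, for any $\eta\in C_c^\infty$ one has
\[
\int|\nabla u_{r_k}|^2\,\eta \;=\; -\int u_{r_k}\,\nabla\eta\cdot\nabla u_{r_k},
\]
proved by substituting $(u_{r_k}-\epsilon)_+$: the function $(u_{r_k}-\epsilon)_+\eta$ is supported compactly inside $\{u_{r_k}>0\}$, so harmonicity gives $\int\nabla u_{r_k}\cdot\nabla\bigl[(u_{r_k}-\epsilon)_+\eta\bigr]=0$; expanding, rearranging, and letting $\epsilon\to 0^+$ by dominated convergence (using $\nabla u_{r_k}=0$ a.e.\ on $\{u_{r_k}=0\}$) yields the identity. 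The right-hand side is \emph{linear} in $\nabla u_{r_k}$: since $u_{r_k}\nabla\eta\to u_0\nabla\eta$ strongly in $L^2$ (uniform convergence on compacts) while $\nabla u_{r_k}\rightharpoonup\nabla u_0$ weakly, the pairing converges to $-\int u_0\nabla\eta\cdot\nabla u_0$, which by the same identity for $u_0$ equals $\int|\nabla u_0|^2\eta$. Hence $\int|\nabla u_{r_k}|^2\eta\to\int|\nabla u_0|^2\eta$ for every $\eta$, i.e.\ strong $W^{1,2}_{\mathrm{loc}}$ convergence, after which the rest of your argument closes as you intended.
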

\begin{proof}
	We assume $x = 0$. That $M_0(r) \rightarrow - \infty$ if $u(0) > 0$ is clear: in this case, $D_0(r)$ remains bounded while $r^2 H_0(r) \rightarrow u^2(0) |\partial B_1|$, so $-H_0(r) \rightarrow - \infty$. We will therefore assume $u(0) = 0$ and show \eqref{eq:Mlimit}. 
	
	First, in this case $M_0(r)$ is bounded in terms of the Lipschitz constant $C_V$ of $u$: $D_0(r) \leq |B_1|(C_V^2 + 1)$, while $H_0(r) \leq C_V^2 |\partial B_1|$. In particular, $M_0(0+)$ is finite.

	Consider $u_r(y) = \frac{u(r y)}{r}$, $\chi_r(y) = \chi(r y)$; these functions also form a variational solution and in particular $u_r$ is Lipschitz uniformly in $r$. For each sequence $r_k \to 0$, we can find a subsequence (still denoted $r_k$) along which $u_{r_k} \rightarrow u_0$ locally uniformly and weakly in $W^{1, 2}_{\mathrm{loc}}$ to some Lipschitz function $u_0 : \R^n \rightarrow [0, \infty)$. We can improve this convergence on $A = \{u_0 > 0\}$: for each $y \in A$ we also have $u_{r_k} > 0$ on some small ball $B_\e(y) \ss A$ for all $k$ large enough, so from elliptic estimates we have $\n u_{r_k} (y)\rightarrow \n u_0(y)$. In particular, we have that $u_0$ is harmonic on $A$.

	Take any $\eta \in C^\infty_c$ with $\eta \geq 0$, and a sequence $\e \to 0+$ such that $\{u_0 > \e \}$ is smooth on a neighborhood of $\supp \eta$. Then
	\[
		\int |\n u_0|^2 \eta = \lim_{\e \to 0+} \n u_0 \cdot \n (u_0 - \e)_+ \eta = - \lim_{\e \to 0+} \int \n \eta \cdot \n u_0 (u_0 - \e)_+ = - \int u_0 \n \eta \cdot \n u_0,
	\]
	where the first and last steps used dominated convergence and the middle step that $u_0$ is harmonic on $\{u_0 > \e \} \ss A$. The same computation is valid with $u_{r_k}$ in place of $u_0$, and so we see that
	\[
		\int |\n u_0|^2 \eta = - \int u_0 \n \eta \cdot \n u_0 = \lim_{r_k \to 0} - \int u_{r_k} \n \eta \cdot \n u_{r_k} = \lim_{r_k \to 0}  \int |\n u_{r_k}|^2 \eta.
	\]
	This implies that $u_{r_k} \rightarrow u_0$ strongly in $W^{1, 2}_{\mathrm{loc}}$ as $k\to\infty$.

    Moreover, by Proposition \ref{prop:monotone}, $u_0$ is a one-homogeneous function, i.e. $u_0(s y) = s u_0(y)$.

	From the strong convergence of $u_{r_k}$ in $W^{1, 2}(B_t)$, as well as uniform convergence on a neighborhood of $B_t$, we have that
	\[
		\frac{1}{t^n}\int_{B_t} |\n u_0|^2 - \frac{1}{t^{n + 1}} \int_{\partial B_t} u_0^2 \> d\mathcal{H}^{n-1}= \lim_{r_k \to 0} \frac{1}{t^n}\int_{B_t} |\n u_{r_k}|^2 - \frac{1}{t^{n + 1}} \int_{\partial B_t} u_{r_k}^2 \> d\mathcal{H}^{n-1}.
	\]
	The left-hand side here is zero, using only that $u_0$ is harmonic on $A$ and one-homogeneous: indeed, arguing as in \eqref{eq:intbyparts} we have that
	\[
		\frac{1}{t^n}\int_{B_t} |\n u_0|^2 - \frac{1}{t^{n + 1}} \int_{\partial B_t} u_0^2 \> d\mathcal{H}^{n-1}= \frac{1}{t^{n + 1}} \int_{\partial B_t} \left(u_0 \n u_0 \cdot y - u_0^2 \right)\> d\mathcal{H}^{n-1}= 0.
	\]
	It follows that
	\[
		0 = \lim_{r_k \to 0} \frac{1}{t^n}\int_{B_t} |\n u_{r_k}|^2 - \frac{1}{t^{n + 1}} \int_{\partial B_t} u_{r_k}^2\> d\mathcal{H}^{n-1} = \lim_{r_k \to 0} M_0(u; r_k) -  \frac{|\{x \in B_{r_k}(x) : \chi(x) = 1\}|}{r_k^n},
	\]
	so
	\[
		M_0(u; 0+) = \lim_{r_k \to 0}\frac{|\{x \in B_{r_k}(x) : \chi(x) = 1\}|}{r_k^n}.
	\]
	As this was true along every sequence, we obtain the conclusion.

The upper semicontinuity of $M_x(u; 0+)$ is a direct consequence of it being a monotone limit of continuous functions. Indeed, let $\delta>0$ and $K<+\infty$. Then Proposition
\ref{prop:monotone} implies that
\[ M_x(u; 0+)\leq M_x(u; r) \leq M_{x_0}(u; r) + \frac{\delta}{2}
\leq \left\{\begin{array}{ll}
M_{x_0}(u; 0+)+\delta, &\text{if }M_{x_0}(u; 0+)>-\infty,\\
-K, &\text{if }M_{x_0}(u; 0+)=-\infty,\end{array}\right.\]
if we choose for fixed $x_0$ first $r>0$ and then $|x-x_0|$ small enough.
\end{proof}

\section{The frequency formula}\label{sec:freq}

Given a variational solution $(u, \chi)$ on $\W$ define the \emph{frequency formula} to be
\[
	N_x(u; r) = \frac{D_x(u; r) - |B_1|}{H_x(u; r)}
\]
for any $B_r(x) \ss \W$ for which the denominator is positive. As $u$ is subharmonic (from Proposition \ref{prop:subharmonic}), $H_x(r) = 0$ implies that $u \equiv 0$ on $B_r(x)$, so $D_x(r), M_x(r)$ are both zero in this case. With this in mind, we further define $N_x(u; r) = - \infty$ when $H_x(r) = 0$.

Also helpful to simplify notation will be the \emph{volume difference}
\[
	V_x(u; r) = \frac{\int_{B_r(x)} \left(1 - \chi\right)}{H_x(u; r)} \geq 0.
\]
With these definitions, we have
\begin{equation}\label{eq:freqplusvol}
	N_x(u; r) + V_x(u; r) = \frac{r \int_{B_r(x)}|\n u|^2}{\int_{\partial B_r}u^2\> d\mathcal{H}^{n-1}},
\end{equation}
the more familiar Almgren frequency of a harmonic function.

\begin{lemma}[cf. {\cite[Theorem 7.1]{VW}} which contains the frequency formula for a related problem]\label{lem:frequency}
	Let $(u, \chi)$ be a variational solution on $\W$, and $B_r(x) \ss \W$. Assume that $x\in \overline{\{u>0\}}$ and that $M_x(u; t) \geq |B_1|$ for some $t < r$.
 Then on the interval $(t, r)$the frequency formula satisfies the bound $N_x(u; s) \geq 1$, is absolutely continuous as well as nondecreasing, and satisfies
	\begin{align}
		\frac{d}{ds}N_x(u; s) &= \frac{2}{H_x(u; s) s^{n + 2}} \int_{\partial B_s(x)} \left(\n u \cdot y  - [N_x(u; s) + V_x(u; s)] u \right)^2\> d\cH^{n-1}(y) \notag\\
		 &+ \frac{2}{s}\left[V^2_x(u; s) + V_x(u; s)(N_x(u; s) - 1)\right].\label{eq:nprime}
	\end{align}
\end{lemma}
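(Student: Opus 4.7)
The plan is to reduce the lower bound $N_x \geq 1$ to the monotonicity of $M_x$ from Proposition \ref{prop:monotone}, then differentiate $N_x = (D_x - |B_1|)/H_x$ using the identities already derived in that proof, and finally read off monotonicity from the manifest non-negativity of both terms on the right of \eqref{eq:nprime}.

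For the lower bound and basic regularity, the elementary identity
\[ N_x(u; s) = 1 + \frac{M_x(u; s) - |B_1|}{H_x(u; s)} \]
combined with the $M_x$-monotonicity of Proposition \ref{prop:monotone} and the hypothesis $M_x(u; t) \geq |B_1|$ immediately yields $N_x(u; s) \geq 1$ on $(t, r)$---provided $H_x > 0$. The positivity of $H_x$ on $(0, r)$ follows because $u$ is non-negative and subharmonic (Proposition \ref{prop:subharmonic}) and $x \in \overline{\{u > 0\}}$: $H_x(u; s) = 0$ would by the maximum principle force $u \equiv 0$ on $\overline{B_s(x)}$, contradicting the choice of $x$. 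Absolute continuity of $N_x$ on compact subintervals of $(t, r)$ then follows since $D_x, H_x$ are absolutely continuous by Proposition \ref{prop:monotone} and $1/H_x$ is bounded there by continuity and positivity.

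For the derivative formula, the explicit expressions for $D_x'(s)$ and $H_x'(s)$ already derived for a.e. $s$ in the proof of Proposition \ref{prop:monotone} feed into the quotient rule $N_x' = D_x'/H_x - N_x H_x'/H_x$. Writing $P = \int_{\partial B_s(x)} u\nabla u\cdot y \, d\cH^{n-1}$, $\tilde L = \int_{\partial B_s(x)}(\nabla u\cdot y)^2 \, d\cH^{n-1}$, and $K = s^{n+1}H_x(u;s)$, the key simplification comes from combining \eqref{eq:intbyparts} with \eqref{eq:freqplusvol} to produce the identity $P = (N_x + V_x)K$, which eliminates all linear $P$-terms from the quotient-rule output and leaves an expression of the shape $\frac{2\tilde L}{sK} + c(N_x, V_x)/s$. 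To match the stated decomposition in \eqref{eq:nprime}, one expands the square $\int_{\partial B_s(x)}(\nabla u\cdot y - (N_x + V_x)u)^2 \, d\cH^{n-1}$ and applies $P = (N_x + V_x)K$ once more inside its cross-term, obtaining $\tilde L - (N_x + V_x)^2 K$; multiplying by $2/(H_x s^{n+2}) = 2/(sK)$ and adding $\frac{2}{s}[V_x^2 + V_x(N_x - 1)]$ then matches $c(N_x, V_x)/s$ by a direct algebraic check.

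With \eqref{eq:nprime} established, monotonicity of $N_x$ on $(t, r)$ is immediate: the square is non-negative, and $V_x^2 + V_x(N_x - 1) \geq 0$ since $V_x \geq 0$ and $N_x \geq 1$. The main technical obstacle is this final algebraic reorganization: the raw quotient-rule output is a linear combination of $\tilde L, P, K$, and repackaging it into a perfect square plus an explicit $V_x$-remainder (rather than some generic sum of terms) requires applying the substitution $P = (N_x + V_x)K$ in two different places, and the particular form chosen in \eqref{eq:nprime} is the one that most cleanly exposes the sign of $N_x'$ and isolates the $V_x$-correction that will vanish at points of highest density.
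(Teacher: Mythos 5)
Your proposal is correct and follows essentially the same route as the paper: the quotient rule applied to $N_x = (D_x - |B_1|)/H_x$, the key substitution $\int_{\partial B_s(x)} u\,\nabla u\cdot y \,d\cH^{n-1} = (N_x + V_x)\,s^{n+1}H_x$ (which packages \eqref{eq:intbyparts} and \eqref{eq:freqplusvol}), and the algebraic regrouping into a perfect square plus the explicit $V_x$-remainder. The only cosmetic difference is that the paper builds the perfect square forward from the quotient-rule output, whereas you verify the claimed decomposition by expanding the square and matching the remainder; the underlying algebra and the use of the preliminary identities are identical.
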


\begin{remark}
	It is possible to rewrite the formula \eqref{eq:nprime} as
	\begin{equation} \label{eq:nprime2}
		\frac{d}{dr}N_x(u; s) = \frac{2}{H_x(u; s) s^{n + 2}} \int_{\partial B_s(x)} \left(\n u \cdot y  - N_x(u; s) u \right)^2\> d\cH^{n-1}(y) + \frac{2}{s}V_x(u; s)(N_x(u; s) - 1)
	\end{equation}
	by directly multiplying through and using \eqref{eq:intbyparts} and \eqref{eq:freqplusvol}. 
This equivalent version is more helpful when using the first term on the right, while \eqref{eq:nprime} is more helpful when we focus on
the $V^2$-term or need more control on the coefficient relevant to the order of homogeneity of $u$.
\end{remark}

\begin{proof}
	Without loss of generality take $x = 0$. First, by Proposition \ref{prop:monotone}, $M_0(s) \geq M_0(t) \geq |B_1|$, so $N_0(s) \geq 1 > - \infty$. That proposition also gives that $N_0(s)$ is absolutely continuous, and lets us compute (using $H = H_0(u; s)$, $H' = \frac{d}{dr}H_0(u; s))$ and similarly for the other quantities):
	\begin{align}\label{eq:freqi1}
		N' &= \frac{D'}{H} - \frac{H'(D - |B_1|)}{H^2} \notag \\
		& = \frac{2}{H s^{n + 2}}\int_{\partial B_s} \left((\n u \cdot y)^2 - u \n u \cdot y \right)\> d\cH^{n-1} - 2 N \left[\frac{1}{H s^{n + 2}}\int_{\partial B_s} u \n u \cdot y \> d\cH^{n-1} - \frac{1}{s}\right].
	\end{align}
	One of the terms with $N$ can be rewritten with the help of \eqref{eq:intbyparts}, giving
	\begin{align*}
		- 2 N \frac{1}{H s^{n + 2}}\int_{\partial B_s} u \n u \cdot y \> d\cH^{n-1} &= - \frac{2}{H s^{n+2}} \frac{1}{H s^{n + 1}}\left(\int_{\partial B_s} u \n u \cdot y \> d\cH^{n-1}\right)^2 \\
		&+ 2 V \frac{1}{H s^{n + 1}} \int_{B_s} |\n u|^2.
	\end{align*}
	Grouping the first term of this with the first term on the right of \eqref{eq:freqi1},
	\begin{align*}
		\frac{2}{H s^{n + 2}}&\left[\int_{\partial B_s} (\n u \cdot y)^2 \> d\cH^{n-1} - \frac{1}{H s^{n + 1}}\left(\int_{\partial B_s} u \n u \cdot y \> d\cH^{n-1}\right)^2\right] \\
		& = \frac{2}{H s^{n + 2}}\left[\int_{\partial B_s} \left(\n u \cdot y - u \frac{1}{H s^{n + 1}} \int_{\partial B_s} u \n u \cdot y \> d\cH^{n-1} \right)^2 \> d\cH^{n-1}\right] \\
		& = \frac{2}{H s^{n + 2}}\left[\int_{\partial B_s} \left(\n u \cdot y - u [N + V] \right)^2 \> d\cH^{n-1}\right].
	\end{align*}
	The first step is a direct computation using the definition of $H$, while the second used \eqref{eq:intbyparts} and \eqref{eq:freqplusvol}.

	The remaining terms of \eqref{eq:freqi1} may be rearranged as follows:
	\begin{align*}
		- \frac{2}{H s^{n + 2}}&\int_{\partial B_s} u \n u \cdot y \> d\cH^{n-1} + 2 V \frac{1}{H s^{n + 1}} \int_{B_s} |\n u|^2 + \frac{2N}{s} \\
		&= \frac{2}{H s^{n + 1}}\int_{B_s}|\n u|^2 [V - 1] + \frac{2 N}{s} \\
		&= \frac{2}{s}[(N + V)(V -1) + N] \\
		&= \frac{2}{s}[V^2 + V(N - 1)],
	\end{align*}
	where the first step used \eqref{eq:intbyparts} and the second used \eqref{eq:freqplusvol}. This gives the formula in \eqref{eq:nprime}.

	Finally, to see that $N$ is nondecreasing, observe that $V \geq 0$, while as we previously observed $N \geq 1$; this ensures that all terms in the expression for $N'$ are nonnegative.
\end{proof}

\begin{lemma}\label{lem:hdoubling}
	Let $(u, \chi)$ be a variational solution on $\W$, and $B_r(x) \ss \W$. Assume that 
$x\in \overline{\{u>0\}}$, $M_x(u; r/2) \geq |B_1|$ and that
$N_x(u; r) \leq N_+$ for some $N_+ > 1$. Then there is a constant $C = C(N_+, n)$ such that for any $s \in [\frac{r}{2}, r]$,
	\[
		H_x(u; s) \leq H_x(u; r) \leq C H_x(u; s).
	\]
\end{lemma}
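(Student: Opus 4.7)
My plan is to integrate the logarithmic derivative of $H$ from $s$ to $r$, using the frequency monotonicity to produce an $L^2$-type bound on $V_x$ and then Cauchy--Schwarz. First, I would derive the identity
\[
	\frac{d}{ds}\log H_x(u;s) = \frac{2}{s}\bigl(N_x(u;s) + V_x(u;s) - 1\bigr).
\]
The formula $\frac{d}{ds}H_x(s) = -\frac{2}{s}H_x(s) + \frac{2}{s^{n+1}}\int_{B_s(x)}|\nabla u|^2$ already appears inside the proof of Proposition \ref{prop:monotone} (via \eqref{eq:intbyparts}). Substituting $\int_{B_s(x)}|\nabla u|^2 = s^{n} H_x(s)\bigl(N_x(s)+V_x(s)\bigr)$ from \eqref{eq:freqplusvol} immediately yields the displayed identity.

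Since $M_x(r/2)\geq |B_1|$ forces $N_x(r/2)\geq 1$, and Lemma \ref{lem:frequency} gives $N_x(s)\geq 1$ on the interval in question, while $V_x\geq 0$ by definition, the right-hand side above is nonnegative; hence $H_x$ is nondecreasing on $[r/2,r]$. This gives the lower bound $H_x(s) \leq H_x(r)$ for free. For the reverse inequality, integration gives
\[
	\log\frac{H_x(r)}{H_x(s)} = \int_s^r \frac{2(N_x-1)}{t}\,dt + \int_s^r \frac{2V_x}{t}\,dt \leq 2(N_+-1)\log 2 + \int_{r/2}^r \frac{2V_x(t)}{t}\,dt,
\]
using $N_x(t)\leq N_+$.

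The main obstacle is the second integral: $V_x$ is a priori only bounded by a multiple of $1/H_x$, which is useless if $H_x(r/2)$ is small. The key idea is to instead harvest control on $V_x$ directly from the frequency monotonicity. Dropping the nonnegative first term in \eqref{eq:nprime} and the nonnegative cross term $V_x(N_x-1)$ (recall $N_x\geq 1$), I would extract
\[
	N_x'(s) \geq \frac{2}{s}V_x(s)^2,
\]
so that
\[
	\int_{r/2}^r \frac{2V_x(s)^2}{s}\,ds \leq N_x(r) - N_x(r/2) \leq N_+ - 1.
\]
Cauchy--Schwarz with respect to the logarithmic measure $dt/t$ then gives
\[
	\int_{r/2}^r \frac{V_x(t)}{t}\,dt \leq \left(\int_{r/2}^r \frac{V_x^2}{t}\,dt\right)^{1/2}\left(\int_{r/2}^r \frac{dt}{t}\right)^{1/2} \leq \sqrt{\tfrac{(N_+-1)\log 2}{2}}.
\]

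Combining these bounds yields $\log(H_x(r)/H_x(s)) \leq 2(N_+-1)\log 2 + 2\sqrt{(N_+-1)\log 2/2}$, and exponentiating produces a constant $C=C(N_+)$ with $H_x(r)\leq C\, H_x(s)$, as required. The only nontrivial step in the argument is recognizing that the quadratic defect $V_x^2/s$ appearing in the frequency derivative \eqref{eq:nprime} is exactly the right companion to turn Cauchy--Schwarz into a genuine doubling estimate; a direct pointwise bound on $V_x$ is insufficient.
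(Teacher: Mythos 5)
Your proof is correct and follows essentially the same route as the paper: identify $\frac{d}{ds}\log H = \frac{2}{s}(N+V-1)$, get the lower bound from $N\geq 1$ and $V\geq 0$, extract $\int_{r/2}^r \frac{2V^2}{s}\,ds \leq N_+-1$ from the $V^2$-term in \eqref{eq:nprime}, then control $\int V/t\,dt$ by Cauchy--Schwarz and integrate. The only cosmetic difference is that you apply Cauchy--Schwarz with respect to the measure $dt/t$ whereas the paper uses Lebesgue measure on $[1/2,1]$, but both yield the same qualitative bound.
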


\begin{proof}
	After scaling we may assume that $x = 0$ and $r = 1$. From the assumptions $N_0(s) \geq 1$, and as $V_0(s) \geq 0$ we have $N_0(s) + V_0(s) \geq 1$. From Proposition \ref{prop:monotone},
	\begin{align*}
		\frac{d}{ds} H_0(s) &= \frac{2}{s}\left[\frac{1}{s^{n+1}}\int_{\partial B_s} u \n u \cdot yd \cH^{n-1}(y) - H_0(s)\right] \\
		&= \frac{2}{s} H_0(s) (N_0(s) + V_0(s) - 1)
		\quad \geq 0.
	\end{align*}
	The second line used \eqref{eq:intbyparts} and \eqref{eq:freqplusvol}. This gives the lower bound of our lemma.

	For the upper bound, we can argue directly using Lemma \ref{lem:frequency}:
	\[
		\int_{\frac{1}{2}}^1 \frac{2 V_0^2(s)}{s} ds \leq \int_{\frac{1}{2}}^1 \frac{d}{ds} N_0(s)  ds = N_0(1) - N_0(\frac{1}{2}) \leq N_+ - 1,
	\]
	so
	\[
		\int_{\frac{1}{2}}^1 V_0 \> ds \leq \sqrt{\int_{\frac{1}{2}}^1 V_0^2(s) \> ds } \leq \sqrt{\int_{\frac{1}{2}}^1 \frac{2 V_0^2(s)}{s} \> ds } \leq \sqrt{N_+ - 1}.
	\]
	Integrating the formula
	\[
		\frac{d}{ds}\log H_0(s) = \frac{2}{s} [N_0(s) + V_0(s) - 1] \leq 4 [N_+ - 1 + V_0(s)]
	\]
	over $[s, 1]$ and using that $s \geq \frac{1}{2}$ then leads to
	\[
		H_0(1) \leq H_0(s) e^{2(N_+ - 1) + 4\sqrt{N_+ - 1}}.
	\]
\end{proof}

\begin{lemma}\label{lem:vdoubling}
	Let $(u, \chi)$ be a variational solution on $\W$, and $B_r(x) \ss \W$. Assume that 
$x\in \overline{\{u>0\}}$, that
$M_x(u; r/2) \geq |B_1|$ and that
$N_x(u; r) \leq N_+$ for some $N_+ > 1$. Then there is a constant $C = C(N_+, n)$ such that
	\[
		\sup_{s \in [\frac{r}{2}, r]} V_x(u; s) \leq C V_x(u; r).
	\]
\end{lemma}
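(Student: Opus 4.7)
The plan is to reduce this to the doubling property of $H_x(u;\cdot)$ already established in Lemma \ref{lem:hdoubling}, after a trivial observation about the numerator in the definition of $V_x$. With the normalization consistent with \eqref{eq:freqplusvol}, we may write
\[
V_x(u;s) \;=\; \frac{s^{-n}\int_{B_s(x)}(1-\chi)}{H_x(u;s)},
\]
so the task is to bound the numerator and the reciprocal of the denominator separately on the interval $[r/2, r]$.

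First I would handle the numerator. Since $\chi\in\{0,1\}$, the integrand $1-\chi$ is nonnegative, so $s \mapsto \int_{B_s(x)}(1-\chi)$ is monotone nondecreasing; in particular for any $s\in[r/2,r]$ we have
\[
\int_{B_s(x)}(1-\chi) \;\leq\; \int_{B_r(x)}(1-\chi).
\]
Meanwhile $s^{-n}\leq 2^n r^{-n}$ on $[r/2,r]$. Combining these two bounds gives $s^{-n}\int_{B_s(x)}(1-\chi) \leq 2^n \, r^{-n}\int_{B_r(x)}(1-\chi)$, i.e.\ the numerator at scale $s$ is controlled by $2^n$ times the numerator at scale $r$.

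For the denominator, I would simply invoke Lemma \ref{lem:hdoubling}: under the hypotheses $x\in \overline{\{u>0\}}$, $M_x(u;r/2)\geq |B_1|$, and $N_x(u;r/2)\leq N_x(u;r)\leq N_+$, there is $C=C(N_+, n)$ with $H_x(u;r)\leq C H_x(u;s)$ for all $s\in[r/2,r]$. Equivalently $1/H_x(u;s) \leq C/H_x(u;r)$.

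Multiplying these two inequalities yields
\[
V_x(u;s) \;=\; \frac{s^{-n}\int_{B_s(x)}(1-\chi)}{H_x(u;s)} \;\leq\; \frac{2^n\,r^{-n}\int_{B_r(x)}(1-\chi)}{H_x(u;s)} \;\leq\; \frac{2^n C\, r^{-n}\int_{B_r(x)}(1-\chi)}{H_x(u;r)} \;=\; 2^n C\, V_x(u;r),
\]
which is exactly the desired doubling bound with constant $2^n C(N_+,n)$. I do not anticipate any serious obstacle: the only nontrivial input is Lemma \ref{lem:hdoubling}, and once that is in hand the argument is purely a matter of comparing monotone quantities. The key conceptual point is that while $V_x$ could in principle degenerate because $H_x$ is itself varying, the frequency bound $N_x(u;r)\leq N_+$ rules out rapid growth of $H_x$ on $[r/2,r]$, and the monotonicity of the $\chi$-deficit then prevents $V_x$ from oscillating faster than a fixed multiplicative factor across that dyadic interval.
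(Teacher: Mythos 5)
Your proof is correct and is essentially the same as the paper's: both reduce the problem to the monotonicity of $s\mapsto\int_{B_s(x)}(1-\chi)$ (since $1-\chi\ge 0$) together with the $H$-doubling bound of Lemma \ref{lem:hdoubling}. The only discrepancy is that you write the $s^{-n}$ normalization in the numerator of $V_x$ explicitly (which is what the identity \eqref{eq:freqplusvol} in fact requires, and appears to have been dropped in the paper's displayed definition of $V$), so you pick up the harmless extra factor $2^n$ on the dyadic interval; the substance of the argument is identical.
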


\begin{proof}
	As usual, it suffices to consider $x = 0$ and $r = 1$. Note that directly from the definitions,
	\[
		V_0(s) = \frac{1}{H_0(s)}\int_{B_s}(1 - \chi) \leq \frac{1}{H_0(s)}\int_{B_1}(1 - \chi) = \frac{H_0(1)}{H_0(s)} V_0(1).
	\]
	Applying Lemma \ref{lem:hdoubling} gives $\frac{H_0(1)}{H_0(s)} \leq C$.
\end{proof}

\begin{lemma}\label{lem:vvanishing}
	Let $(u, \chi)$ be a variational solution on $B_r(x)$ and $t \in (0, \frac{r}{2}]$. Assume that 
$x\in \overline{\{u>0\}}$, that
$M_x(u; t) \geq |B_1|$, $N_x(u; r) - N_x(u; t) < \d$, and $N_x(u; r) < N_+$. Then there exists a constant $C = C(N_+, n)$ such that
	\[
		V_x(u; t) \leq C \sqrt{\d}.
	\]
\end{lemma}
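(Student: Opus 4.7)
The key ingredient is the nonnegative $V_x^2$ term in the derivative formula of Lemma \ref{lem:frequency}. Since $M_x(u; t) \geq |B_1|$ and $M$ is nondecreasing by Proposition \ref{prop:monotone}, we have $M_x(u; s) \geq |B_1|$ and hence $N_x(u; s) \geq 1$ for every $s \in [t, r]$. Lemma \ref{lem:frequency} then applies throughout $(t, r)$, and since all terms on the right of \eqref{eq:nprime} are nonnegative one may discard everything except the $V^2$ contribution to get
\[
\frac{d}{ds} N_x(u; s) \;\geq\; \frac{2\,V_x^2(u; s)}{s}.
\]

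Integrating this inequality from $t$ to $2t$ (permitted since $2t \leq r$) and using the monotonicity bound $N_x(u; 2t) \leq N_x(u; r)$,
\[
\frac{1}{t}\int_t^{2t} V_x^2(u; s)\,ds \;\leq\; \int_t^{2t}\frac{2\,V_x^2(u; s)}{s}\,ds \;\leq\; N_x(u; r) - N_x(u; t) \;<\; \delta,
\]
so $\int_t^{2t} V_x^2(u; s)\,ds \leq t\delta$.

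The remaining step turns this average bound into a pointwise bound at scale $t$ via a ``reverse doubling'' for $V_x$: for every $s \in [t, 2t]$ I claim that $V_x(u; t) \leq C(N_+, n)\, V_x(u; s)$. This repeats the reasoning in Lemma \ref{lem:vdoubling}. The quantity $s \mapsto \int_{B_s(x)}(1 - \chi)$ is nondecreasing, so $V_x(u; t)\, H_x(u; t) \leq V_x(u; s)\, H_x(u; s)$ whenever $s \geq t$; and Lemma \ref{lem:hdoubling} applied at scale $2t$, whose hypotheses $M_x(u; t) \geq |B_1|$ and $N_x(u; t) \leq N_x(u; 2t) \leq N_x(u; r) < N_+$ are immediate from the above and from monotonicity of $N$, bounds $H_x(u; s)/H_x(u; t) \leq C(N_+, n)$ on $[t, 2t]$. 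Squaring the resulting inequality for $V_x$ and integrating,
\[
\frac{t}{C^2}\, V_x^2(u; t) \;\leq\; \int_t^{2t} V_x^2(u; s)\,ds \;\leq\; t\delta,
\]
which gives $V_x(u; t) \leq C\sqrt{\delta}$ after renaming constants.

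The only subtlety is keeping track of the direction of the doubling: what is needed is ``smaller scale bounded by larger scale,'' which is precisely the direction produced by the monotonicity of $s \mapsto \int_{B_s}(1-\chi)$ coupled with the $H$-doubling of Lemma \ref{lem:hdoubling} (the literal statement of Lemma \ref{lem:vdoubling} controls the sup by the value at the largest scale, which is the opposite of what is required). I do not anticipate any serious analytic obstacle beyond this bookkeeping.
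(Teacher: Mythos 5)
Your proof is correct and follows essentially the same route as the paper: integrate the $V^2$ term in the frequency derivative over $[t,2t]$ to get an annular smallness bound, then downgrade to a pointwise bound at scale $t$ using the $H$-doubling of Lemma \ref{lem:hdoubling} (applied at outer scale $2t$, so the density hypothesis is exactly $M_x(u;t)\geq|B_1|$) together with the monotonicity of $s\mapsto\int_{B_s(x)}(1-\chi)$. You are also right to flag that the literal statement of Lemma \ref{lem:vdoubling} controls $\sup_{s\in[r/2,r]}V_x(s)$ by $V_x(r)$, which is the opposite direction to the needed bound $V_x(t)\leq C\,V_x(s)$ for $s\in[t,2t]$; the paper's phrase ``Applying Lemma \ref{lem:vdoubling}'' is shorthand for precisely the reverse-doubling computation that you spell out, and your squaring-and-integrating in place of extracting a single scale with $V_x(s)\leq\sqrt{\delta}$ is a harmless cosmetic variant.
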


This lemma essentially asserts that for $t \leq r/2$, we have $V_x(t)$ bounded by a constant depending only on $n$ and $N_+$, and moreover $V_x(t) \rightarrow 0$ if $N_x(u; 2 t) - N_x(u; t) \rightarrow 0$ (uniformly over $u$ with bounded $N_+$). In particular, this implies that at any point $x$ where $N_x(u, 0 + )\geq 1$, we have $V_x(r) \rightarrow 0$.

\begin{proof}
	By a straightforward scaling and translation, it suffices to show this with $x = 0$ and $t = 1$, with $r \geq 2$. We may, without loss of generality, replace $r$ by $2$, as the monotonicity of $N_0$ from Lemma \ref{lem:frequency} gives us $N_0(2) \leq N_0(r) < N_+$ and $N_0(2) - N_0(1)\leq N_0(r) - N_0(1) < \d$.

	From \eqref{eq:nprime}, we have that
	\[
		\int_1^2 \frac{2 V^2_0(s)}{s} \> ds \leq N_0(2) - N_0(1) < \d,
	\]
	so there must be at least one $s \in [1, 2]$ with $V_0(s) \leq \sqrt{\d}$. Applying Lemma \ref{lem:vdoubling},
	\[
		V_0(1) \leq C(N_+)\sqrt{\d}.
	\]
\end{proof}

\begin{lemma}\label{lem:changeofpoint}
	Let $(u, \chi)$ be a variational solution on $B_r(x)$ with $x\in \overline{\{u>0\}}$,
$M_x(u; r/4) \geq |B_1|$ and $N_x(u; r) \leq N_+$.
Then there exists a $C = C(n)$ such that for any $y \in B_{r/4}(x)$ and $s < \frac{1}{2} r$,
	\[
		N_y(u; s) \leq \max\{1, C N_x(u; r)\}
	\]
	and
	\[
		H_y(u; \frac{5}{8}r) \leq C H_x(u; r).
	\]
\end{lemma}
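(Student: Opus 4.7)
The lemma has two separate assertions, an $H$-bound and a frequency bound, which I would address in sequence.

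For the $H$-bound $H_y(u;\tfrac{5r}{8}) \leq C H_x(u;r)$, I would use that $u^2$ is subharmonic on $B_r(x)$: since $u\geq 0$ is continuous and $\Delta u$ is a non-negative Radon measure by Proposition \ref{prop:subharmonic}, we have $\Delta u^2 = 2|\nabla u|^2 + 2u\,\Delta u \geq 0$ distributionally. Comparing with its Poisson extension in $B_r(x)$, every $z\in B_{7r/8}(x)$ satisfies $u^2(z) \leq \int_{\partial B_r(x)} P_r(z,\cdot)\, u^2 \leq \|P_r(z,\cdot)\|_\infty \int_{\partial B_r(x)} u^2$. For $y\in B_{r/4}(x)$ we have $\partial B_{5r/8}(y) \subset B_{7r/8}(x)$, and $\mathrm{dist}(z,\partial B_r(x))\geq r/8$ gives the Poisson kernel bound $\|P_r(z,\cdot)\|_\infty \leq C(n)/r^{n-1}$. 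Integrating the resulting pointwise bound $u^2(z)\leq C(n) r^2 H_x(u;r)$ over $\partial B_{5r/8}(y)$ yields $H_y(u;5r/8)\leq C(n) H_x(u;r)$.

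For the frequency bound I would first dispose of the trivial case $N_y(u;s)\leq 1$; otherwise $N_y(u;s) > 1$ forces $M_y(u;s) > |B_1|$. By Proposition \ref{prop:monotone}, $M_y(u;\cdot)$ is non-decreasing, so $M_y(u;t) > |B_1|$ throughout $[s, 3r/4)$ (where $B_{3r/4}(y)\subset B_r(x)$), and Lemma \ref{lem:frequency} then gives monotonicity of $N_y$ on $(s, 3r/4)$, reducing matters to the inequality $N_y(u;5r/8) \leq C N_x(u;r)$. Note that under the hypotheses, monotonicity of $M_x$ together with $M_x(u;r/4)\geq |B_1|$ implies $N_x(u;r)\geq 1$, so the $\max\{1,\cdot\}$ on the right-hand side plays only the role of absorbing constants.

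For the numerator, the inclusion $B_{5r/8}(y)\subset B_r(x)$ gives $D_y(u;5r/8)\leq (8/5)^n D_x(u;r)$, hence $D_y(u;5r/8) - |B_1| \leq (8/5)^n N_x(u;r) H_x(u;r) + C(n)|B_1|$. For the denominator I would establish a lower bound $H_y(u;5r/8) \gtrsim H_x(u;r)$ by combining three ingredients: (i) the mean value inequality $\fint_{B_t(z)} u^2 \leq \fint_{\partial B_t(z)} u^2$ from subharmonicity of $u^2$, which yields $H_y(u;5r/8) \geq C(n) r^{-(n+2)} \int_{B_{5r/8}(y)} u^2$; (ii) the inclusion $B_{3r/8}(x)\subset B_{5r/8}(y)$, available for $y\in B_{r/4}(x)$; and (iii) the lower bound $\int_{B_{3r/8}(x)} u^2 \geq c\, r^{n+2} H_x(u;r)$, obtained by integrating $s\mapsto s^{n+1} H_x(u;s)$ over a shell $s\in [r/4, 3r/8]$, using that $s^2 H_x(u;s)$ is non-decreasing (again from subharmonicity of $u^2$) together with Lemma \ref{lem:hdoubling} applied twice to descend from $H_x(u;r)$ to $H_x(u;r/4)$ at constant-factor cost. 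Dividing numerator by denominator and using $N_x\geq 1$ to absorb the additive $C(n)|B_1|$ into $C N_x H_x$ then yields the desired bound.

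The main obstacle is the lower bound $H_y(u;5r/8)\gtrsim H_x(u;r)$: subharmonicity of $u^2$ only yields the upper direction for sphere averages, so the argument must detour through interior ball integrals and invoke $H$-doubling at the original basepoint $x$ to compare sphere integrals at different radii and centers. The secondary bookkeeping issue is handling the additive $C(n)|B_1|$ term produced by the ball-comparison of $D_y$ with $D_x$, which is cleanly absorbed only once one notes that the baseline $N_x\geq 1$ is built into the hypotheses.
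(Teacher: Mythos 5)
Your $H$-bound argument via the Poisson kernel is a clean alternative to the paper's, and in fact it establishes the inequality in the direction the lemma states; the paper's own proof of that item actually establishes the \emph{reverse} inequality $H_x(u;r)\leq C(N_+,n)H_y(u;5r/8)$, which is the one needed for the frequency estimate and which you re-derive separately for the denominator. (Both directions are used later in the paper, so both should be recorded.) Your route to that lower bound --- sub-mean-value at $y$, the inclusion $B_{3r/8}(x)\subset B_{5r/8}(y)$, and Lemma \ref{lem:hdoubling} applied twice at $x$ --- coincides with the paper's computation up to cosmetic rearrangement, and so does the reduction via monotonicity of $N_y$ on $(s,3r/4)$.

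However, the final absorption step has a genuine gap, and the remark that $N_x\geq 1$ resolves it is not correct. After dividing the numerator estimate by $H_y(u;5r/8)\geq c\,H_x(u;r)$, the extra term is
\[
\frac{\bigl((8/5)^n-1\bigr)|B_1|}{H_y(u;5r/8)} \leq \frac{C|B_1|}{H_x(u;r)},
\]
which is not a fixed constant: nothing in the hypotheses bounds $H_x(u;r)$ from below, and indeed $H_x(u;r)$ and $D_x(u;r)-|B_1|=N_x(u;r)H_x(u;r)$ can both be arbitrarily small while $N_x(u;r)$ remains in $[1,N_+]$. The observation $C'\leq C'N_x$, valid since $N_x\geq 1$, allows one to absorb a \emph{fixed} additive constant into $C\,N_x$, but it does nothing against the solution-dependent quantity $|B_1|/H_x(u;r)$. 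Working instead with the sharper numerator bound $D_y(5r/8)-|B_1|\leq(8/5)^n r^{-n}\int_{B_r(x)}|\nabla u|^2=(8/5)^n\bigl(N_x(u;r)+V_x(u;r)\bigr)H_x(u;r)$, which does not introduce $|B_1|$ directly, only relocates the difficulty to bounding $V_x(u;r)$, which again is controlled only by $|B_1|/H_x(u;r)$. The paper's own one-line step --- ``$D_y(5/8)\leq(8/5)^nD_0(1)$ \ldots showing that $N_y(5/8)\leq C\,N_0(1)$'' --- elides exactly the same point, so this is arguably an issue in the source as much as in your write-up; but the specific claim that $N_x\geq 1$ is what rescues the absorption should be withdrawn or replaced by an actual lower bound on $H_x(u;r)$.
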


\begin{proof}
	We show this with $x = 0$ and $r = 1$. First, from Lemma \ref{lem:hdoubling} we have that
	\[
		\int_{\partial B_1} u^2 \> d\mathcal{H}^{n-1}\leq C(N_+) \int_{\partial B_s} u^2\> d\mathcal{H}^{n-1}
	\]
	for $s \in [\frac{1}{4}, 1]$. Integrating this gives
	\[
		\int_{\partial B_1} u^2 \> d\mathcal{H}^{n-1}\leq C \int_{B_{3/8} \setminus B_{1/4}} u^2 \leq C \int_{B_{3/8}} u^2.
	\]

	For any subharmonic function $v$,
	\[
		0 \leq \frac{d}{dt} \frac{1}{t^n}\int_{B_t} v = \frac{1}{t^n}\int_{\partial B_t} v \> d\mathcal{H}^{n-1}- \frac{n}{t^{n+1}}\int_{B_t} v,
	\]
	and in particular this can be applied to translates of $u^2$. Take any $y \in B_{1/4}$; then
	\[
		\int_{B_{3/8}} u^2 \leq \int_{B_{5/8}(y)} u^2 \leq C \int_{\partial B_{5/8}(y)}u^2\> d\mathcal{H}^{n-1}.
	\]
	So this gives $C H_y(5/8) \geq H_0(1)$. On the other hand, we have the inequality $D_y(5/8) \leq (\frac{8}{5})^n D_0(1)$ directly from the definition, showing that
	\[
		N_y(5/8) \leq C \> N_0(1).
	\]

	Finally, observe that from Lemma \ref{lem:frequency} that if for any $s$ we have $N_y(s) > 1$, it remains nondecreasing (and hence $>1$ for all larger $s$), leading to $N_y(s) \leq N_y(5/8) \leq C\> N_0(1)$.
\end{proof}

\section{Analysis of frequency blow-ups}\label{sec:blowup}

Throughout this section we consider a sequence of variational solutions $(u_k, \chi_k)$ on $\W$, with $B_2\ss \W$, $0\in \overline{\{u_k>0\}}$,
$M_0(u; 1/2) \geq |B_1|$ and $N_0(u_k; 2) \leq N_+$ for some constant $N_+$, and $\d_k := N_0(u_k; 2) - N_0(u_k; 1/2) \rightarrow 0$. An example of such a sequence arises by considering blow-up sequences $\frac{u(r_k \cdot)}{r_k}$ of a given variational solution $(u, \chi)$ with $N_0(x; 0+) \geq 1$, but it will be useful to treat this more general case for use in compactness arguments below.

Given such a sequence, we consider the \emph{renormalized sequence}
\[
	v_k(y) = \frac{u_k(y)}{\sqrt{H_0(u_k; 1)}}.
\]
The $v_k$ have the property that $\int_{\partial B_1}v_k^2 = 1$.

\begin{lemma}\label{lem:renormalizedlimits}
	A subsequence of the renormalized sequence $v_k$ converges weakly in $W^{1,2}(B_1)$ and strongly in $L^2(B_1)\cap L^2(\partial B_1)$ to a function $v \in W^{1,2}(B_1)$. This function $v$ satisfies the following properties:
	\begin{enumerate}
		\item $v \geq 0$ on $B_1$
		\item $v$ is subharmonic on $B_1$
		\item On $B_1 \sm B_{1/2}$, $v$ is homogeneous of order $N_\infty := \lim_{k \rightarrow \infty} N_0(u_k; 1/2) \geq 1$
		\item $\int_{\partial B_1} v^2\> d\mathcal{H}^{n-1} = 1$.
	\end{enumerate}
\end{lemma}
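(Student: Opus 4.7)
The plan is to obtain uniform $W^{1,2}$ bounds on $v_k$, extract a limit $v$ by Rellich--Kondrachov and trace compactness, and verify the four properties in turn. Properties (1), (2), and (4) will follow quickly from stability under weak $W^{1,2}$ and strong $L^2$ convergence, whereas property (3), the homogeneity on $B_1 \setminus B_{1/2}$, is the substantive step and drives the choice of estimates.

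For the uniform bounds, I would first apply Lemma~\ref{lem:hdoubling} at the scales $r = 1$ and $r = 2$ to conclude that $H_0(u_k; s)$ is comparable to $H_0(u_k; 1)$ for $s \in [1/2, 2]$, so $H_0(v_k; s)$ is bounded above and below uniformly in $k$. Next, Lemma~\ref{lem:vvanishing} with $(t,r) = (1, 2)$ gives $V_0(u_k; 1) \leq C\sqrt{\delta_k}$, and Lemma~\ref{lem:vdoubling} propagates this smallness uniformly to $s \in [1/2, 1]$. Since the Almgren-type quantity in \eqref{eq:freqplusvol} is invariant under the renormalization $u_k \mapsto v_k$, combining with $N_0(u_k; s) \leq N_+$ yields
\[
\int_{B_s} |\nabla v_k|^2 = \frac{N_0(u_k; s) + V_0(u_k; s)}{s} \int_{\partial B_s} v_k^2 \, d\mathcal{H}^{n-1} \leq C(N_+, n)
\]
uniformly in $k$ for, say, $s = 3/2$. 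By Rellich--Kondrachov, a subsequence of $v_k$ converges weakly in $W^{1,2}(B_{3/2})$ and strongly in $L^2_{\mathrm{loc}}(B_{3/2})$ to some $v$, and trace compactness upgrades this to strong $L^2(\partial B_1)$ convergence. Property (4) is then immediate from $\int_{\partial B_1} v_k^2 = 1$; property (1) follows from pointwise nonnegativity of $v_k$; property (2) follows because subharmonicity of $v \in W^{1,2}$ is equivalent to $\int \nabla v \cdot \nabla \varphi \leq 0$ for nonnegative test functions, a condition preserved under weak $W^{1,2}$ convergence given each $v_k$ is subharmonic by Proposition~\ref{prop:subharmonic}.

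For the homogeneity of $v$ on $B_1 \setminus B_{1/2}$, the plan is to integrate the derivative identity \eqref{eq:nprime2} over $s \in [1/2, 2]$, obtaining
\[
\int_{1/2}^2 \frac{2}{H_0(u_k; s) s^{n+2}} \int_{\partial B_s} (\nabla u_k \cdot y - N_0(u_k; s) u_k)^2 \, d\mathcal{H}^{n-1} \, ds \leq \delta_k.
\]
Substituting $u_k = \sqrt{H_0(u_k; 1)} \, v_k$ cancels the normalization constant from the numerator against the $H_0(u_k; s)$ denominator up to the bounded factor $H_0(u_k; s)/H_0(u_k; 1)$, and restricting to $s \in [1/2, 1]$ and applying the coarea formula yields
\[
\int_{B_1 \setminus B_{1/2}} \bigl(\nabla v_k \cdot y - N_0(u_k; |y|) v_k\bigr)^2 \, dy \leq C(N_+, n) \delta_k \to 0.
\]
Monotonicity of the frequency and $\delta_k \to 0$ imply $N_0(u_k; s) \to N_\infty$ uniformly on $[1/2, 1]$, so strong $L^2$-convergence of $v_k$ allows replacing the coefficient by the constant $N_\infty$ with vanishing error. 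Weak convergence of $\nabla v_k$ and strong convergence of $v_k$, combined with lower semicontinuity of the $L^2$ norm, then force $\nabla v \cdot y = N_\infty v$ a.e. on $B_1 \setminus B_{1/2}$. Integration of this first-order radial ODE along each ray shows that $v$ is homogeneous of degree $N_\infty$, and $N_\infty \geq 1$ follows from Lemma~\ref{lem:frequency} since $N_0(u_k; 1/2) \geq 1$ under the hypothesis $M_0(u_k; 1/2) \geq |B_1|$.

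The hardest technical step is the limit passage in the frequency estimate, since $\nabla v_k$ converges only weakly while the coefficient $N_0(u_k; |y|)$ depends on both $k$ and $|y|$. The essential point is that the hypothesis $\delta_k \to 0$ simultaneously forces uniform convergence $N_0(u_k; \cdot) \to N_\infty$ on $[1/2, 1]$ by monotonicity and vanishing of $V_0(u_k; s)$ on the same interval by Lemma~\ref{lem:vvanishing}. This allows the coefficient in \eqref{eq:nprime2} to be replaced by the constant $N_\infty$ with an error that disappears under strong $L^2$ convergence, reducing the limit to a standard lower semicontinuity argument.
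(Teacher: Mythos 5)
Your overall strategy matches the paper's proof: uniform $W^{1,2}$ bound via the Almgren identity \eqref{eq:freqplusvol} and the vanishing of $V_0$, extraction of a weak $W^{1,2}$ and strong $L^2(B_1)\cap L^2(\partial B_1)$ limit, trivial verification of (1), (2), (4), and then homogeneity on the annulus from the integrated form of \eqref{eq:nprime2}, uniform convergence $N_0(u_k;\cdot)\to N_\infty$ on $[1/2,1]$, and weak lower semicontinuity of the quadratic functional $w\mapsto\int|\nabla w\cdot y - N_\infty w|^2$ (you phrase this as lsc of the $L^2$ norm applied to the linear map, which is the same argument the paper gets by convexity).

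There is one small gap in your derivation of the $W^{1,2}$ bound. You evaluate the identity $\int_{B_s}|\nabla v_k|^2 = \frac{N_0(u_k;s)+V_0(u_k;s)}{s}\int_{\partial B_s}v_k^2$ at $s = 3/2$, but you have only established bounds on $V_0(u_k;s)$ for $s\in[1/2,1]$ (Lemma~\ref{lem:vvanishing} requires $t\leq r/2$, so with the solution defined only on $B_2\subset\Omega$ it gives smallness of $V_0(u_k;t)$ only for $t\leq 1$; Lemma~\ref{lem:vdoubling} propagates smallness downward, not upward). At $s=3/2$ you would need $V_0(u_k;3/2)$ bounded, and that is neither established by the lemmas you cite nor obvious, because $V_0(u_k;2)$ itself is not controlled. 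The fix is to use $s=1$, as the paper does: $\int_{B_1}|\nabla v_k|^2 = N_0(u_k;1) + V_0(u_k;1) \leq N_+ + C\sqrt{\delta_k}$, which is bounded and in fact converges to $N_\infty$. The strong $L^2(\partial B_1)$ convergence then follows from the compact boundary trace embedding $W^{1,2}(B_1)\hookrightarrow L^2(\partial B_1)$ directly, with no need for a larger ball. With this change your argument is complete and essentially identical to the paper's.
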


\begin{proof}
	We may choose a subsequence so that $N_0(u_k; 1/2)$ converges to some $N_\infty \in [1, N_+]$. By Lemma \ref{lem:vvanishing}, we have that $V_0(u_k; 1) \rightarrow 0$, so
	\[
		\int_{B_1}|\n v_k|^2 = \frac{\int_{B_1} |\n u_k|^2}{H_0(u_k; 1)} = N_0(u_k; 1) + V_0(u_k; 1) \rightarrow N_\infty.
	\]
	Along with the fact that $\int_{\partial B_1}v_k^2 \> d\mathcal{H}^{n-1}= 1$, this means that the $v_k$ are uniformly bounded in $W^{1,2}(B_1)$, and so we may find a subsequence which converges to some $v \in W^{1,2}(B_1)$ weakly in $W^{1,2}(B_1)$ and strongly in $L^2(B_1)\cap L^2(\partial B_1)$
by Rellich's theorem as well as the compact embedding on the boundary. 
Consequently (4) holds.
As $v_k \geq 0$, it is clear that $v \geq 0$.

	Take any $\eta \in C_c^\infty(B_1)$ with $\eta \geq 0$. Then
	\[
		\int \n v \cdot \n \eta = \lim_{k \rightarrow \infty} \int \n v_k \cdot \n \eta \leq 0,
	\]
	as the $u_k$ (and hence the $v_k$) are subharmonic from Proposition \ref{prop:subharmonic}. Therefore, $v$ is also subharmonic.

	Applying \eqref{eq:nprime2} to $u_k$ and integrating from $1/2$ to $1$ leads to
	\[
		\int_{B_1 \sm B_{1/2}} \frac{1}{H_0(u_k; |y|)} |\n u_k(y) \cdot y - N_0(u_k; |y|) u_k(y)|^2 \> dy \leq C \d_k \rightarrow 0.
	\]
	As $N_0(u_k; |y|) \rightarrow N_\infty$ uniformly in $|y|$, while $H_0(u_k; |y|) \geq c H_0(u_k; 1)$ by Lemma \ref{lem:hdoubling},
	\[
		\int_{B_1 \sm B_{1/2}}  |\n v_k(y) \cdot y - N_\infty v_k(y)|^2 \> dy \rightarrow 0.
	\]
	The functional $w \mapsto \int_{B_1 \sm B_{1/2}}|\n w(y) \cdot y - N_\infty w(y)|^2$ is convex, and so lower semicontinuous under weak convergence in $W^{1,2}$. This leads to
	\[
		\int_{B_1 \sm B_{1/2}} |\n v(y) \cdot y - N_\infty v(y)|^2 \> dy \leq \lim_{k \rightarrow \infty} \int_{B_1 \sm B_{1/2}}  |\n v_k(y) \cdot y - N_\infty v_k(y)|^2 = 0,
	\]
	and so $\n v(y) \cdot y - N_\infty v(y)$ almost everywhere on $B_1 \sm B_{1/2}$. This is equivalent to order $N_\infty$ homogeneity, i.e. $v(y) = |y|^{N_\infty} v(y/|y|)$ almost everywhere, where $v(y/|y|)$ is the trace of $v$ on $\partial B_1$.
\end{proof}

In the above argument, it is important to note that it is not clear that $v_k \rightarrow v$ \emph{strongly} in $W^{1,2}(B_1)$ (or the annular region $B_1\sm B_{1/2}$). The limit of $\int_{B_1} |\n v_k|^2$ is $N_\infty$, as we computed, but it may be possible that this is strictly larger than $\int_{B_1} |\n v|^2$. If $v$ were harmonic, $\int_{B_1}|\n v|^2 = N_\infty \int_{\partial B_1} v^2\> d\mathcal{H}^{n-1}$ would be elementary to verify from the homogeneity of $v$, but notice that we have not passed any PDE for $v_k$ to the limit: this appears to also require an improvement of convergence.

\begin{conjecture}
	The renormalized sequence $v_k \rightarrow v$ strongly in $W^{1,2}(B_1 \sm B_{1/2})$.
\end{conjecture}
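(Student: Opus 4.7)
The plan is to upgrade weak to strong convergence by matching the limit $\int_{B_1}|\nabla v_k|^2 \to N_\infty$ (established in the proof of Lemma \ref{lem:renormalizedlimits} via $N_0(u_k;1)+V_0(u_k;1)\to N_\infty+0$) against the corresponding quantity for the limit $v$. If one can show $\int_{B_1}|\nabla v|^2 = N_\infty$, then norm convergence together with weak convergence of $\nabla v_k$ forces strong $L^2(B_1)$ convergence, which is in fact stronger than the claim on the annulus.

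To evaluate $\int_{B_1}|\nabla v|^2$, I would apply a Pohozaev-type identity to $v$. Following the derivation of \eqref{eq:intbyparts}, if $v$ is harmonic on $\{v>0\}$, then approximating with $(v-\epsilon)_+$ and integrating by parts yields
\[
\int_{B_1}|\nabla v|^2 = \int_{\partial B_1} v\,\partial_\nu v \, d\mathcal{H}^{n-1}.
\]
Since $v$ is $N_\infty$-homogeneous on the annulus $B_1\setminus B_{1/2}$, we have $\partial_\nu v = N_\infty v$ on $\partial B_1$, and combined with $\int_{\partial B_1} v^2\,d\mathcal{H}^{n-1}=1$ this gives $\int_{B_1}|\nabla v|^2 = N_\infty$, matching the limit and closing the argument.

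The main obstacle is therefore to prove that $v$ is harmonic on $\{v>0\}$. Each $v_k$, being a positive multiple of $u_k$, is harmonic on $\{v_k>0\}$ by Proposition \ref{prop:subharmonic}; to pass this to the limit distributionally, it would suffice to check that for every $\phi \in C_c^\infty(\{v>0\})$ one has $\supp \phi \subset \{v_k>0\}$ for $k$ large enough, as then $\int v_k \Delta \phi = 0$ passes via strong $L^2$ convergence to $\int v\,\Delta\phi=0$. This inclusion is immediate from locally uniform convergence $v_k \to v$, which in turn follows from Arzel\`a--Ascoli provided the $v_k$ are locally uniformly Lipschitz; since $|\nabla v_k|\leq C_V/\sqrt{H_0(u_k;1)}$, this holds automatically whenever $H_0(u_k;1)$ stays bounded below.

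In the general case where $H_0(u_k;1)\to 0$ and the Lipschitz constants of $v_k$ blow up, a substitute equicontinuity estimate is the hard part. A natural candidate would be a uniform interior $C^{0,\alpha}$ bound for non-negative subharmonic functions that are harmonic on their positivity sets and have uniformly bounded Dirichlet energy. Alternatively, one can extract a subsequential weak-$\ast$ limit $\Delta v_k \rightharpoonup \mu$, whose total mass in $B_{3/4}$ is bounded via $-\int\nabla v_k\cdot\nabla\eta \leq C\sqrt{N_\infty}$ as in the proof of Lemma \ref{lem:BV}; strong $L^2$ convergence identifies $\mu=\Delta v$ distributionally, and the remaining delicate task is to show $\mu$ is concentrated on $\{v=0\}$ using the BV-type bounds of Lemma \ref{lem:BV} together with the rigidity coming from the $N_\infty$-homogeneity of $v$ on the annulus.
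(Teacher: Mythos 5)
The statement you are proving is labeled a \emph{Conjecture} in the paper, and the authors explicitly decline to prove it: immediately afterward they write that they ``do not need to solve this highly nontrivial concentration-compactness conjecture in full strength'' and instead establish only the one-dimensional special case (Theorem \ref{thm:approx1Dlimits}), which is all the rest of the paper uses. So there is no proof of the general statement in the paper to compare against.

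As for your attempt: it correctly identifies the pivot point, namely that the conjecture would follow from knowing $v$ is harmonic on $\{v>0\}$, via the Pohozaev-type identity $\int_{B_1}|\nabla v|^2 = N_\infty\int_{\partial B_1} v^2\, d\cH^{n-1} = N_\infty$. But that is exactly the circularity the paper flags in the paragraph right before the conjecture: ``we have not passed any PDE for $v_k$ to the limit: this appears to also require an improvement of convergence.'' Your proposal reproduces this circularity without breaking it. The equicontinuity you invoke fails precisely in the case that matters: when $H_0(u_k;1)\to 0$ (which is the interesting regime, e.g.\ blow-ups at degenerate points), the bound $|\nabla v_k|\leq C_V/\sqrt{H_0(u_k;1)}$ degenerates, and the suggested ``uniform interior $C^{0,\alpha}$ bound for non-negative subharmonic functions that are harmonic on their positivity sets with uniformly bounded Dirichlet energy'' is not an available estimate (such a bound would be a substantial theorem on its own). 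Your fallback route — take a weak-$*$ limit $\mu$ of $\Delta v_k$ and show $\mu$ is carried by $\{v=0\}$ — also does not close: Lemma \ref{lem:BV} controls the total mass of $\mu$ but says nothing about its support, and without locally uniform convergence of $v_k$ there is no mechanism to prevent the mass of $\Delta v_k$, which sits on $\partial\{v_k>0\}$, from concentrating inside $\{v>0\}$ in the limit (small bubbles of $\{v_k=0\}$ that disappear but leave Laplacian mass behind). That escape of mass is exactly the concentration-compactness phenomenon the conjecture is asking one to rule out, and your proposal names it as the ``remaining delicate task'' without resolving it. The argument therefore has a genuine gap; the paper's actual route is to sidestep the conjecture entirely and use the extra hypothesis $\sum_{i=1}^{n-1}\int_A|\partial_i v_k|^2\to 0$ in Theorem \ref{thm:approx1Dlimits}, which forces $v$ to be one-dimensional, reduces the positivity set to half-spaces, and lets one verify strong convergence directly on $A\setminus\{x_n=0\}$ by matching weighted $L^2$-norms.
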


Fortunately we do not need to solve this highly nontrivial concentration-compactness
conjecture in full strength. Instead, we only consider the case when the blow-up is one-dimensional.

\begin{theorem}\label{thm:approx1Dlimits}
	Write $x = (x', x_n) \in \R^{n - 1} \times \R$ and set $A = B_1 \sm \overline{B_{1/2}}$. Assume that $n \geq 2$ and that the normalized sequence $(v_k)_{k\in \N}$ defined above satisfies
	\[
		\sum_{i = 1}^{n-1} \int_A |\partial_i v_k|^2 \rightarrow 0 \text{ as }k\to\infty.
	\]
	Then on $A$, $v$ is independent of the first $n - 1$ coordinates, i.e. $v(x) = h(x_n)$, $N_\infty = 1$, and $h(x_n) = \a(n) |x_n|$.
\end{theorem}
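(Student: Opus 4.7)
First I would deduce the functional form of $h$. Weak lower semicontinuity of the convex functional $w \mapsto \int_A|\partial_i w|^2$ under weak $W^{1,2}(B_1)$ convergence, applied to the hypothesis $\sum_{i<n}\int_A|\partial_i v_k|^2 \to 0$, yields $\partial_i v \equiv 0$ a.e.\ on $A$ for $i=1,\ldots,n-1$; hence $v(x) = h(x_n)$ on $A$. Combining with the $N_\infty$-homogeneity on $A$ from Lemma \ref{lem:renormalizedlimits}(3), $h$ satisfies $h(\lambda t) = \lambda^{N_\infty}h(t)$ for $\lambda > 0$, so $h(t) = A t_+^{N_\infty} + B t_-^{N_\infty}$ for constants $A,B\ge 0$. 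The normalization $\int_{\partial B_1}v^2\, d\cH^{n-1} = 1$ forces $A^2 + B^2 > 0$.

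Next I would show $v$ is harmonic on $A \cap \{v > 0\}$, which forces $N_\infty = 1$. Each $v_k$ is harmonic on $\{v_k > 0\}$ (Proposition \ref{prop:subharmonic}) and uniformly bounded in $L^\infty_{\mathrm{loc}}(B_1)$, the latter following from the subharmonicity of $v_k^2$, the bound $\int_{\partial B_1}v_k^2 = 1$, and Lemma \ref{lem:hdoubling}. Given $x_0 \in A \cap \{v > 0\}$, the almost-one-dimensional structure combined with strong $L^2$-convergence should imply $v_k > 0$ on a neighborhood of $x_0$ for large $k$; interior elliptic estimates and Arzel\`a--Ascoli then yield smooth subsequential convergence $v_k \to v$ there, so $v$ is harmonic at $x_0$. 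Substituting $v(x) = h(x_n)$ into $\Delta v = 0$ on $\{h > 0\}$ and using $h(t) = A t^{N_\infty}$ on $t > 0$ (with $A > 0$) gives $A N_\infty(N_\infty - 1) t^{N_\infty - 2} = 0$ for $t > 0$, forcing $N_\infty = 1$ since $N_\infty \ge 1$; the case $B > 0$ is symmetric, and at least one of $A,B$ is positive.

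With $N_\infty = 1$, $v(x) = A(x_n)_+ + B(x_n)_-$ on $A$. To obtain $A = B$ I would pass the variational identity for $u_k$ divided by $c_k^2 := H_0(u_k;1)$ to the limit, testing with $\xi = \phi e_n$ for $\phi \in C_c^\infty(A)$. The contribution $\int (\chi_k/c_k^2)\,\dvg\xi$ is controlled by $|\{\chi_k = 0\}\cap B_1|/c_k^2 = V_0(u_k;1) \to 0$ (Lemma \ref{lem:vvanishing}); combined with strong convergence of $\nabla v_k$ on $\{x_n \ne 0\}$, one obtains the limiting identity
\[ \int \left(|\nabla v|^2 \dvg\xi - 2\nabla v \cdot D\xi\, \nabla v\right) = 0. \]
A direct computation using $|\nabla v|^2 = A^2$ on $\{x_n>0\}$, $B^2$ on $\{x_n<0\}$, and integration by parts in $x_n$ yields $(A^2 - B^2)\int \phi(x',0)\,dx' = 0$ for all $\phi$, so $A = B$. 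Finally the normalization $\int_{\partial B_1}v^2 = A^2 |\partial B_1|/n = 1$ gives $A = B = \alpha(n) := \sqrt{n/|\partial B_1|}$.

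The main obstacle is the smooth convergence step: since the Lipschitz bound on variational solutions does not transfer to a uniform Lipschitz bound on $v_k = u_k/c_k$ when $c_k \to 0$, one cannot appeal to Arzel\`a--Ascoli directly. One must combine the $L^\infty_{\mathrm{loc}}$ bound from subharmonicity of $v_k^2$ with harmonicity of $v_k$ where positive, and a propagation-of-positivity argument exploiting the almost-one-dimensional structure, to show $v_k > 0$ on a neighborhood of any $x_0 \in \{v > 0\}$ for large $k$. The related difficulty of crossing the free boundary in the balancing argument is resolved by the vanishing rate $|\{\chi_k = 0\}| = o(c_k^2)$ from the frequency monotonicity.
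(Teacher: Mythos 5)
Your structural skeleton matches the paper's: deduce $h(t)=\beta_+ t_+^{N_\infty}+\beta_- t_-^{N_\infty}$ from homogeneity plus the vanishing transverse derivatives, establish $N_\infty=1$, and finally use the domain variation identity with a vertical test field near $\{x_n=0\}$ to get $\beta_+=\beta_-$. However, at both of the two compactness-sensitive steps your argument has gaps, and in each case the paper uses a different, more robust device that you do not reproduce.

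First, your route to $N_\infty=1$ hinges on the claim that for any $x_0\in A\cap\{v>0\}$ one has $v_k>0$ on a fixed neighborhood of $x_0$ for all large $k$, so that elliptic estimates upgrade to smooth convergence. You acknowledge this is the crux and gesture at a ``propagation-of-positivity'' argument, but do not supply it; and strong $L^2$ convergence of $v_k$ plus $\int_A|\partial_i v_k|^2\to 0$ does not, by itself, forbid thin sets where $v_k=0$ threading through $B_\rho(x_0)$ (recall $v_k$ need not be uniformly Lipschitz since $c_k=\sqrt{H_0(u_k;1)}$ may vanish). This is precisely the concentration-compactness issue the paper highlights as unresolved in general. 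The paper sidesteps it: from $\int_A|\nabla v_k\cdot y - N_\infty v_k|^2\to 0$ (frequency monotonicity) and the hypothesis, one gets $\int_A|\partial_n v_k\, y_n|^2\to\int_A|h'(y_n)|^2 y_n^2$ by expanding the square and using that the cross term involves only $v_k\to v$ strongly in $L^2$; combined with weak $W^{1,2}$ convergence this gives \emph{strong} $L^2$ convergence of $\partial_n v_k\, y_n$, hence of $\nabla v_k$ on any $U\subset\subset A\setminus\{x_n=0\}$. One then passes the (rescaled) variational identity to the limit on such $U$ --- using $V_0(u_k;1)\to 0$ to kill the $\chi$-term --- reducing by the one-dimensional structure to $\int(h')^2\partial_n\xi_n=0$, i.e.\ $h'h''=0$, and homogeneity forces $N_\infty=1$. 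This requires neither positivity of $v_k$ near any particular point nor pointwise convergence.

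Second, your balancing computation tests the limiting stationarity identity with $\xi=\phi e_n$ for $\phi\in C_c^\infty(A)$ whose support necessarily meets $\{x_n=0\}$, and integrates by parts across that hyperplane. This needs strong $W^{1,2}$ convergence of $v_k\to v$ on a full neighborhood of $A\cap\{x_n=0\}$, not merely on compacta in $\{x_n\ne 0\}$ --- otherwise energy could concentrate on $\{x_n=0\}$ and the identity would not pass to the limit. The paper fills exactly this gap with a separate bootstrap: once $N_\infty=1$ is known, $v=\beta_+(x_n)_+ + \beta_-(x_n)_-$ vanishes on the support of its (measure) Laplacian, so $v\Delta v=0$, and the identity $\int|\nabla v|^2\eta=-\int v\,\nabla\eta\cdot\nabla v$ then passes from the $v_k$ to $v$ under weak convergence and upgrades to strong convergence on all of $A'=B_{7/8}\setminus B_{5/8}$. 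Only after that step is the stationarity identity valid for test fields supported near $\{x_n=0\}$, enabling the computation $(\beta_+^2-\beta_-^2)\int_{\{x_n=0\}}\xi_n\,d\cH^{n-1}=0$. Without both of these devices, which you omit, the proof does not close.
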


Note that the constant $\a(n)$ is explicit and fully determined by the normalization $$\int_{\partial B_1} v^2\> d\mathcal{H}^{n-1} = 1.$$ This theorem is still valid when $n = 1$ (as can be seen by simply classifying all variational solutions), but the proof below uses that $A$ is connected.

\begin{proof}
	That $v$ is independent of the first $n - 1$ coordinates is immediate from the assumption. The function $h$ must be of the form $h(x_n) = \b_+ (x_n)_+^{N_\infty} + \b_- (x_-)_+^{N_\infty}$ from the homogeneity of $v$, and at least one of $\b_\pm \neq 0$.

	We also recall that 
	\[
		\int_{A} |\n v(y) \cdot y - N_\infty v(y)|^2 \> dy = 0=\lim_{k \rightarrow \infty} \int_{A} |\n v_k(y) \cdot y - N_\infty v_k(y)|^2 \> dy.
	\]
	By expanding the square and using the weak convergence of $v_k \rightarrow v$ in $W^{1, 2}$, this leads to
	\[
		\lim_{k \rightarrow \infty} \int_A |\p_n v_k(y) y_n|^2 \> dy = \int_A |h'(y_n)|^2 y_n^2 \> dy.
	\]
	In particular, take any $U \cc A \sm \{x_n = 0\}$; we have shown that $v_k \rightarrow v$ strongly in $W^{1, 2}(U)$ as $k\to\infty$,
as weak convergence combined with convergence of the weighted $L^2$-norm
implies strong convergence. Taking any $\xi \in C_c^{0, 1}(U; \R^n)$, this implies that
	\[
		\int \left(|\n v|^2 \dvg \xi - 2 \n v \cdot D\xi \n v\right) = 0
	\]
	by passing the domain variation formula for $u_k$ to the limit and using that $V_0(u_k; 1)\rightarrow 0$.
Due to the one-dimensional structure of $v$, this may be rewritten (integrating by parts against constants in the $x'$ variable) as
	\begin{equation}\label{eq:1Dvariations}
		0 = \int (h')^2 \p_n \xi_n = -\int 2 h' h'' \xi_n.
	\end{equation}
	This implies that either $\b_\pm = 0$ or $h'' = 0$ on each component of $A \sm \{x_n = 0\}$. On at least one of the components, then, $\b_\pm $ is nonzero and so we have shown that $N_\infty = 1$.

	Next, we claim that $v_k \rightarrow v$ strongly on the whole set $A' = B_{7/8} \sm B_{5/8}$.
Take any nonnegative $\eta \in C_c^\infty(A)$, and integrate by parts, using that we already know that $v \Delta v = 0$:
	\[
		\int |\n v|^2 \eta = - \int v \n \eta \cdot \n v - \int v \eta \Delta v = - \int v \n \eta \cdot \n v.
	\]
	Then this passes to the limit under weak convergence, so
	\[
		\int |\n v|^2 \eta = - \lim_{k \rightarrow \infty} \int v_k \n \eta \cdot \n v_k = \lim_{k \rightarrow \infty} \int |\n v_k|^2 \eta,
	\]
	where the last integration by parts used that $\Delta v_k = 0$ on $\{v_k > 0\}$. As above,
this gives strong convergence locally in $A'$. As a consequence, \eqref{eq:1Dvariations} is valid for any $\xi \in C_c^1(A')$.

	As $n \geq 2$, we may find an $x'$ such that $(x', t) \in A'$ for $t \in (-t_0, t_0)$ for some small $t_0$. Setting $\xi_n \geq 0$ to be supported near $(x', 0)$ and symmetric with respect to $x_n$, \eqref{eq:1Dvariations} becomes
	\[
		[\b_+^2 - \b_-^2]\int_{\{x_n=0\}} \xi_n \> d\mathcal{H}^{n-1}= 0,
	\]
	so $\b_+ = \b_-$.
\end{proof}

\section{Core estimates}\label{sec:estimate}

For $(u, \chi)$ a variational solution on $\W$, we define the \emph{set of highest density}
\[
	\Sigma^H = \{ x \in \W : x\in \overline{\{u>0\}}\text{ and }M_x(u; 0+) \geq |B_1| \}.
\]
In Lemma \ref{lem:density0char} we will show that every degenerate free boundary point
is contained in $\Sigma^H$.

Recall from Proposition \ref{prop:Mlimits} that $|B_1|$ is the largest possible value $M_x(0+)$ may attain at $x$, and from Lemma \ref{lem:frequency}---using the monotonicity of
$M_x$---that at any $x\in \Sigma^H$ the frequency function $N_x(r)$ is non-decreasing and
satisfies $N_x(u; 0+) \geq 1$.

It will also be useful to define
\[
	\Sigma^H_r = \{ x \in \W : B_r(x)\cc \W,  x\in \overline{\{u>0\}}\text{ and }
M_x(u; r) \geq |B_1| \}.
\]
The relationship between these is that for $s < r$, $\Sigma^H_s \cap \W_r \ss \Sigma^H_r$, where $\W_r = \{ x \in \W : B_r(x) \cc \W \}$ is the set of points a distance more than $r$ from the boundary of $\W$. Similarly, $\Sigma^H \cap \W_r \ss \Sigma^H_r$, while $\bigcap_{0 < s < r} \Sigma^H_s = \Sigma^H \cap \W_r$. The sets $\Sigma^H_r$ are relatively closed in $\W_r$, while $\Sigma^H$ is relatively closed in $\W$. 

\subsection{\texorpdfstring{$L^2$}{L2}-Subspace Approximation}

Let $\mu$ be a finite Borel measure, and set
\[
	\b_\mu^2(x, r) = \inf_{L} \int_{B_r(x)} \frac{d^2(y, L)}{r^2} \frac{d\mu}{r^{n-1}}
\]
to be the \emph{Jones number} of $\mu$, where the infimum is taken over all affine hyperplanes $L \ss \R^n$. Smallness of $\beta$ measures how concentrated on a hyperplane $\mu$ is. The first of two estimates we prove in this section is the following theorem, which controls this quantity by the drop in frequency on the support of $\mu$:

\begin{theorem}[$L^2$-Subspace Approximation]\label{thm:l2subspaceapprox}
	Let $(u, \chi)$ be a variational solution on $B_{8r}(x)$ and $\mu$ a finite (positive) Borel measure supported on $\Sigma^H_r$. Assume that $x \in \Sigma^H_r$ and that $N_x(u; 50 r) \leq N_+$. Then there is a constant $C = C(N_+, n)$ such that
	\[
		\b_\mu^2(x, r) \leq \frac{C}{r^{n-1}} \int_{B_r(x)} [N_y(u; 20 r) - N_y(u; r)] \> d\mu(y).
	\]
\end{theorem}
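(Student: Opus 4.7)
The plan is to adapt the Naber--Valtorta $L^2$-subspace approximation strategy, driven by the frequency derivative formula of Lemma~\ref{lem:frequency}. Rescale so that $x = 0$ and $r = 1$, and set $H := H_0(u; 20)$. By Lemma~\ref{lem:hdoubling} and Lemma~\ref{lem:changeofpoint} (applied under $N_0(u; 50) \leq N_+$), we have $H_y(u; s) \sim H$ and $N_y(u; s) \leq \widetilde N_+$ uniformly for $y \in \Sigma^H_1 \cap B_1$ and $s \in [1, 20]$, with constants depending only on $n$ and $N_+$. Integrating the first term of \eqref{eq:nprime2} from $s = 1$ to $s = 20$ and using this uniformity gives, for every $y \in \supp(\mu)$,
\[
\|e_y\|_{L^2(\mathcal{A})}^2 \leq C H \, W(y), \qquad e_y(z) := \nabla u(z) \cdot (z - y) - N_y(|z - y|) u(z),
\]
with $W(y) := N_y(u; 20) - N_y(u; 1)$, where $\mathcal{A} := B_{10} \setminus \overline{B_5}$ is a common annular shell lying inside $B_{20}(y)\setminus \overline{B_1(y)}$ for every $y \in B_1$.

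The core algebraic identity, obtained by subtracting $e_y$ from $e_{y'}$ for any $y, y' \in B_1$ and $z \in \mathcal{A}$, is
\[
\nabla u(z) \cdot (y - y') = e_{y'}(z) - e_y(z) + [N_{y'}(|z - y'|) - N_y(|z - y|)]\, u(z). \qquad (\star)
\]
Squaring $(\star)$, integrating $z \in \mathcal{A}$ and $(y, y') \in B_1 \times B_1$ against $d\mu \otimes d\mu$, and absorbing the frequency-variance contribution $\iint (N_y - N_{y'})^2 \|u\|_{L^2(\mathcal{A})}^2 d\mu d\mu$ by pigeon-holing onto a fixed scale $s_\star \in [1, 20]$ (at cost $O(W(y) + W(y'))$ per the monotonicity in Lemma~\ref{lem:frequency}), the expected outcome is
\[
\iint (y - y')^T M (y - y') \, d\mu(y) \, d\mu(y') \lesssim H \, \mu(B_1) \int W(y) \, d\mu(y),
\]
where $M := \int_\mathcal{A} \nabla u \otimes \nabla u$.

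The Jones number satisfies $\beta_\mu^2(0,1) = \lambda_{\min}(A_\mu)$ with $A_\mu := \int (y - \bar y)\otimes (y - \bar y)\, d\mu(y)$. Expanding $y - y'$ in the $A_\mu$-eigenbasis $\{\nu_i\}$ and using the variance identity reduces the left side of the previous display to $2\mu(B_1) \sum_i \lambda_i (\nu_i^T M \nu_i)$, so that
\[
\lambda_{\min}(A_\mu) \cdot \max_i(\nu_i^T M \nu_i) \leq \sum_i \lambda_i (\nu_i^T M \nu_i) \lesssim H \int W \, d\mu.
\]
Closing the argument requires the quantitative non-degeneracy $\max_i (\nu_i^T M \nu_i) \geq \trace(M)/n \gtrsim H$, which reduces to a shell-energy lower bound $\int_{\mathcal{A}} |\nabla u|^2 \gtrsim H$. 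This is plausible because $N_0(u; s) \geq 1$ forces $D_0(u; s) \geq |B_1| + H_0(u; s)$, and integration over $s \in [1, 20]$ combined with $0 \leq \chi \leq 1$ should produce the needed shell energy. The principal technical hurdles are (i) the pigeon-holing absorption of the frequency-variance term, which requires controlling the variance of $\{N_y(s_\star)\}$ over $\supp(\mu)$ in terms of the average of $W$ (not at all automatic from the hypotheses, unlike in the Almgren setting for harmonic maps), and (ii) the shell non-degeneracy bound. Of these the first is the genuine new difficulty, as the Bernoulli frequency lacks the strong doubling enjoyed by Almgren's; handling it is the main place where Lemmas~\ref{lem:hdoubling}, \ref{lem:vdoubling}, and \ref{lem:vvanishing} must be invoked in concert.
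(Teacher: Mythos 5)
Your proposal has the right skeleton---integrate the frequency derivative \eqref{eq:nprime2} to get $L^2$-control on approximate radial homogeneity, subtract the resulting identities at two centers to control $\nabla u(z)\cdot(y-y')$ on a common annulus, and close via a shell-energy lower bound---and this matches the paper's plan. The paper's presentation is slightly different (it anchors at a single reference point $x_0$ chosen to minimize $N_{x_0}(20)-N_{x_0}(1)$, rather than the symmetric double integral you propose, and estimates $\beta_\mu$ by plugging the hyperplane through $x_0$ with normal $\nabla u(z)/|\nabla u(z)|$ directly into the definition rather than passing through the eigenvalue reformulation), but both routes are viable and closely related.

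The genuine gaps are in exactly the two places you flag as ``hurdles,'' and your proposed resolutions do not work. For hurdle (i), the frequency-variance term $\iint |N_y - N_{y'}|^2\|u\|^2_{L^2(\mathcal A)}\,d\mu\,d\mu$: pigeon-holing onto a fixed scale $s_\star$ controls the variation of $s\mapsto N_y(s)$ at a single $y$ (by monotonicity), but does nothing to control the variation of $y\mapsto N_y(s_\star)$ across $\supp\mu$, which is what the variance term actually requires. What resolves this in the paper is Lemma~\ref{lem:twopointhom}, conclusion~\eqref{eq:twopointhomc1}: if a Lipschitz function is $\delta$-approximately homogeneous of degree $N_0$ about $0$ and of degree $N_x$ about $x$ on overlapping annuli, then $|N_0-N_x|^2\int u^2\leq C\delta$. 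This is an elementary ODE-along-rays argument that uses no Bernoulli structure at all, so the difficulty you describe as ``genuinely new'' relative to Almgren's setting evaporates; the lemmas you point to (\ref{lem:hdoubling}, \ref{lem:vdoubling}, \ref{lem:vvanishing}) are the wrong tools here. For hurdle (ii), the shell bound $\int_{\mathcal A}|\nabla u|^2\gtrsim H$: your sketch integrates $D_0(s)\geq|B_1|+H_0(s)$ over $s$, but this lower-bounds $\int(|\nabla u|^2+\chi)$, and on an annulus the $\chi$-contribution is not obviously negligible compared to $H$. The paper instead uses $N_{x_0}+V_{x_0}\geq 1-C\sqrt\delta$ (via \eqref{eq:freqplusvol} and Lemma~\ref{lem:vvanishing}, the latter requiring the reduction to the small-$\delta$ regime which you omit) to get $\int_{B_{10}(x_0)}|\nabla u|^2\gtrsim H$, and then uses the monotonicity $N_{x_0}(5)\leq N_{x_0}(10)$, again with $V$-control, to show the Dirichlet energy cannot concentrate in $B_5(x_0)$. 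You should also note explicitly the reduction ``if $\min_y W(y)>\delta$ the estimate is trivial,'' which is what licenses the small-$V$ regime used throughout.
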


This theorem is perhaps best understood as an estimate on discrete measures: the left-hand side is large if the measure's support is in general position, i.e. at least one point is far from any hyperplane. Then this estimate guarantees that, at least at one point $y$ in the support, $N_y(2r) - N_y(r)$ has to be large. Indeed, if the frequency was almost constant at a collection of points in general position, then $u$ would have to be approximately homogeneous with respect to each one of them, and this is impossible for degrees of homogeneity other than $0$. The proof is based on a quantitative version of this argument.

\begin{lemma}\label{lem:twopointhom}
	Let $u$ be a Lipschitz function on $\R^n$, and fix a point $x \in B_1\sm \{0\}$ and number $N_+ > 1$. Assume there are two constants $N_0, N_x \in [1, N_+]$ and a number $\d$ such that
	\[
		\int_{B_{10} \sm B_1} |\n u(y) \cdot y - N_0 u(y)|^2 \> dy \leq \d
	\]
	and also
	\[
		\int_{B_{10} \sm B_1} |\n u(x + y) \cdot y - N_x u(x + y)|^2 \> dy \leq \d.
	\]
	Then there is a constant $C = C(N_+, n)$ such that
	\begin{equation}\label{eq:twopointhomc1}
		|N_0 - N_x|^2 \int_{B_{10} \sm B_1} u^2 \leq C \d
	\end{equation}
	and
	\begin{equation}\label{eq:twopointhomc2}
		\int_{B_{9} \sm B_2} |\n u(y) \cdot x |^2dy  \leq C \d.
	\end{equation}
\end{lemma}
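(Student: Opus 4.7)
\emph{Overall strategy.} The plan is to use hypothesis (1) to write $u$ as approximately $N_0$-homogeneous from the origin, substitute this representation into hypothesis (2), and extract both conclusions from a positive-definite quadratic form that emerges on spherical quantities.

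\emph{Execution.} From hypothesis (1), the radial ODE $\p_r(r^{-N_0}u(r\omega)) = r^{-N_0-1}A(r\omega)$, where $A(y) := \n u(y)\cdot y - N_0 u(y)$ satisfies $\|A\|_{L^2(B_{10}\sm B_1)}^2 \leq \delta$, integrates to produce a function $v$ on $S^{n-1}$ (for instance $v(\omega) := u(\omega)$) and a remainder $R(y) := u(y) - |y|^{N_0}v(y/|y|)$ satisfying $\|R\|_{L^2(B_{10}\sm B_1)} + \|\n R\|_{L^2(B_{10}\sm B_1)} \leq C(N_+,n)\sqrt{\delta}$, via Hardy's inequality in the radial variable (and an analogous argument after differentiating the ODE in $\omega$ for the tangential derivative). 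Since $|x|<1$, the overlap $D := B_9 \sm B_2$ lies inside both $B_{10}\sm B_1$ and $B_{10}(x)\sm B_1(x)$, so the change of variables $z = x+y$ puts the second hypothesis in the form $\int_D|\n u(z)\cdot(z-x) - N_x u(z)|^2\,dz \leq \delta$. For the exact homogeneous profile $|z|^{N_0}v(z/|z|)$, the decomposition $\n(|z|^{N_0}v) = N_0|z|^{N_0-1}\omega v + |z|^{N_0-1}\n_{S^{n-1}}v$ together with the orthogonality $\n_{S^{n-1}}v\cdot\omega = 0$ yields
\[
\n u(z)\cdot(z-x) - N_x u(z) = |z|^{N_0}\left[(N_0-N_x)v(\omega) - |z|^{-1}T(\omega)\right] + (\text{remainder}),
\]
with $\omega = z/|z|$, $T(\omega) := N_0 v(\omega)(\omega\cdot x) + \n_{S^{n-1}}v(\omega)\cdot x$, and the remainder (which depends on $R$, $\n R$) controlled in $L^2(D)$ by $C\sqrt{\delta}$. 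Integrating in spherical coordinates and expanding the square gives
\[
c_1(N_0-N_x)^2\|v\|_{L^2(S^{n-1})}^2 - 2c_2(N_0-N_x)\langle v,T\rangle_{L^2(S^{n-1})} + c_3\|T\|_{L^2(S^{n-1})}^2 \leq C\delta,
\]
where $c_j := \int_2^9 r^{n-j+2N_0}\,dr$. By the Lagrange identity $c_1c_3 - c_2^2 = \tfrac{1}{2}\int\!\!\int r^{n-3+2N_0}s^{n-3+2N_0}(r-s)^2\,dr\,ds$, which is bounded away from zero uniformly for $N_0\in[1,N_+]$, the $2\times 2$ matrix is uniformly positive definite, so both $(N_0-N_x)^2\|v\|_{L^2(S^{n-1})}^2$ and $\|T\|_{L^2(S^{n-1})}^2$ are bounded by $C(N_+,n)\delta$.

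\emph{Translation and main obstacle.} These spherical bounds give the two stated conclusions directly: approximate homogeneity implies $\int_{B_{10}\sm B_1}u^2 \leq C(N_+,n)\|v\|_{L^2(S^{n-1})}^2 + C\delta$, yielding (\ref{eq:twopointhomc1}); and the identity $\n(|z|^{N_0}v)\cdot x = |z|^{N_0-1}T(\omega)$ combined with the control on $\n R$ gives $\int_D|\n u\cdot x|^2 \leq C\|T\|_{L^2(S^{n-1})}^2 + C\delta \leq C\delta$, yielding (\ref{eq:twopointhomc2}). The main technical obstacle is the first step: upgrading the $L^2$-bound on $A$ to $L^2$-control on both $R$ \emph{and} $\n R$. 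The Hardy-type estimate handles the radial part cleanly, while the tangential part requires differentiating the radial ODE in the sphere variable and using a companion ODE estimate; this is what makes the substitution in Step 2 meaningful in the approximate (rather than exact) setting.
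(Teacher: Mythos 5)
Your proposed decomposition $u(y) = |y|^{N_0}v(y/|y|) + R(y)$ runs into a genuine obstruction at exactly the step you flag as ``the main technical obstacle'': the claim that hypothesis (1) alone gives $\|\nabla R\|_{L^2(B_{10}\setminus B_1)} \leq C\sqrt{\delta}$ is false, and it cannot be repaired by your suggested route (``differentiating the radial ODE in $\omega$''), because that would require an $L^2$ bound on the angular derivative $\nabla_\omega A$ of the approximate-homogeneity defect $A(y) = \nabla u(y)\cdot y - N_0 u(y)$, i.e., on second derivatives of $u$. The only available regularity on $u$ is Lipschitz, so no such bound exists. To see the failure concretely, take $u(y) = |y|^{N_0}\left(1 + g(|y|)h(\omega)\right)$ with $g(1)=0$, $h(\omega) = \epsilon^{1/2}\sin(\epsilon^{-1}\theta)$ for an angular coordinate $\theta$, and $g$ linear with $g' = \eta$ small. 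Then $A = |y|^{N_0+1}g'h$, $v \equiv 1$, and $R = |y|^{N_0}gh$, so $\|A\|_{L^2}^2 \sim \eta^2\epsilon$, while $\|\nabla_\omega R\|_{L^2}^2 \sim \eta^2\epsilon^{-1}$; the Lipschitz constraint forces $\eta \lesssim \epsilon^{1/2}$, and taking $\eta\sim\epsilon^{1/2}$ gives $\|A\|_{L^2}^2 \sim \epsilon^2 =: \delta$ but $\|\nabla_\omega R\|_{L^2}^2 \sim 1 \gg \delta$. So the ``remainder'' in your substitution step is not $O(\sqrt{\delta})$ in $L^2$, and the quadratic-form bound you derive from it is unjustified. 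Since both of your final conclusions lean on this remainder control (the $T$-bound for \eqref{eq:twopointhomc2}, and the $(N_0-N_x)\|v\|$-bound for \eqref{eq:twopointhomc1}), the argument does not close.

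The underlying issue is that your approach tries to extract the two separate bounds ($(N_0-N_x)^2\|v\|^2$ small and $\|T\|^2$ small) simultaneously from a single positive-definite quadratic form, but the form only becomes visible after a decomposition that the hypotheses do not support. The paper's proof circumvents this entirely: it first observes the exact algebraic identity $\nabla u(y)\cdot x - (N_0-N_x)u(y) = [\nabla u(y)\cdot y - N_0 u(y)] - [\nabla u(x+z)\cdot z - N_x u(x+z)]$ (with $z = y - x$), which by the triangle inequality in $L^2(B_9\setminus B_2)$ immediately gives $\|\nabla u\cdot x - (N_0-N_x)u\|_{L^2} \leq 2\sqrt{\delta}$ and shows \eqref{eq:twopointhomc1}$\Rightarrow$\eqref{eq:twopointhomc2}. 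It then proves \eqref{eq:twopointhomc1} by integrating the approximate-homogeneity defects along line segments $y+tx$ (with exponential weight $e^{(N_x-N_0)t}$) and along rays $ty$, producing comparisons of the form $\|u(\cdot+x)e^{N_x-N_0} - u\|$ and $\|2^{-N_0}u(2\cdot) - u\|$ small, and combining these yields $|2^{-N_x}-2^{-N_0}|^2\int u^2 \leq C\delta$, hence \eqref{eq:twopointhomc1}. This route uses only the $L^2$ bounds on $A$ directly along curves and never needs to differentiate the defect. If you want to keep a decomposition-style argument, you would need either an a priori second-derivative bound on $u$ (not available) or a fundamentally different way of packaging the angular information, such as the paper's segment integration.
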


\begin{proof}
	We may write $x = y + (x - y)$ for any $y$, to obtain that $\n u(y) \cdot x = \n u(y) \cdot y + \n u(x + (y - x)) \cdot (x - y)$. Integrating this,
	\begin{align*}
		\|\n u(y)& \cdot x - (N_0 - N_x) u(y) \|_{L^2(B_{9} \sm B_{2}, dy)} \\
		&\leq \|\n u(y) \cdot y - N_0 u(y)\|_{L^2(B_{9} \sm B_{2}, dy)} \\&\quad + \|\n u(x + (y - x)) \cdot (y - x) - N_x u(y)\|_{L^2(B_{9} \sm B_{2}, dy)} 
		\leq 2 \sqrt{\d}.
	\end{align*}
	In particular, this means that \eqref{eq:twopointhomc1} implies \eqref{eq:twopointhomc2}.

	For any $y \in B_8 \sm B_3$, the line segment $y + t x, t\in [0, 1]$ is contained in $B_9 \sm B_2$. It follows that
	\begin{align*}
		u(y + x)e^{N_x - N_0} - u(y) &= \int_0^1 \frac{d}{dt} \left(e^{(N_x - N_0)t}u(y + t x)\right) \> dt \\
		&= \int_0^1 e^{(N_x - N_0)t}[\n u(y + t x) \cdot x - (N_0 - N_x) u(y + t x)]dt.
	\end{align*}
	We square and integrate in $y$ over the annulus $B_8 \sm B_3$:
	\begin{align}
		\int_{B_8 \sm B_3} |&u(y + x)e^{N_x - N_0} - u(y)|^2 \> dy \notag \\&\leq C(N_+)\int_0^1 \int_{B_8 \sm B_3}|\n u(y + t x) \cdot x - (N_0 - N_x) u(y + t x)|^2 \> dy \> dt \notag \\
		& \leq C \int_{B_9 \sm B_2}|\n u(y) \cdot x - (N_0 - N_x) u(y)|^2 \> dy \notag \\
		& \leq C \d. \label{eq:twopointhomi1}
	\end{align}

	In a somewhat similar fashion, consider the segment $t y$, with $y \in \partial B_s$ and $t \in [1, T]$ such that $s T \leq 10$. In this case
	\[
		T^{- N_0} u(T y) -  u(y) = \int_1^T \frac{d}{dt} t^{- N_0} u(t y) \> dt = \int_1^T t^{-N_0 - 1}[ \n u(t y) \cdot (t y) - N_0 u (t y)] \> dt.
	\]
	Squaring and integrating this over the sphere $\p B_s$,
	\begin{equation}\label{eq:twopointhomi3}
		\int_{\p B_s}|T^{- N_0} u (T y) - u(y)|^2 \> d\mathcal{H}^{n-1}(y) \leq C(N_+) \d,
	\end{equation}
where the constant $C(N_+)$ depends on $T$ and $s$ but can be chosen
uniformly in $s \in [1, 10]$.
	This allows us to compare the integrals of $u^2$ between spheres of different sizes:
	\begin{align*}
		\int_{\p B_s} T^{-2N_0} u^2(T y) \> d\mathcal{H}^{n-1}(y) &\leq 2 \int_{\p B_s} \left(u^2 + |T^{- N_0} u (T y) - u(y)|^2\right)\> d\mathcal{H}^{n-1}(y) \\& \leq C\int_{\p B_s} u^2\> d\mathcal{H}^{n-1} + C \d,
	\end{align*}
	so
	\[
		\int_{\p B_{Ts}} u^2\> d\mathcal{H}^{n-1} \leq C \int_{\p B_{s}} u^2\> d\mathcal{H}^{n-1} + C \d.
	\]
	Similarly
	\[
		\int_{\p B_s} u^2\> d\mathcal{H}^{n-1} \leq C \int_{\p B_{Ts}} u^2\> d\mathcal{H}^{n-1} + C \d,
	\]
	and so averaging over annuli,
	\begin{equation}\label{eq:twopointhomi5}
		\frac{1}{C}\int_{B_{10}\sm B_1} u^2 - \d \leq  \int_{\p B_s} u^2\> d\mathcal{H}^{n-1} \leq C\int_{B_{10}\sm B_1} u^2 + C \d
	\end{equation}
	for every $s \in [1, 10]$.
	
	We may instead simply integrate \eqref{eq:twopointhomi3} over $[3,4]$ and choose $T = 2$, which leads to
	\begin{equation}\label{eq:twopointhomi2}
		\int_{B_4\sm B_3}|2^{- N_0} u (2 y) - u(y)|^2 \leq C(N_+) \d.
	\end{equation}
	An analogous computation centered around $x$ gives
	\begin{equation}\label{eq:twopointhomi4}
		\int_{B_4\sm B_3}|2^{- N_x} u (x + 2 y) - u(x + y)|^2 \leq C \d
	\end{equation}
	as well. On the other hand, from \eqref{eq:twopointhomi1} we know that (setting $A = e^{N_x - N_0}$, which is bounded above and below in terms of $N_+$)
	\[
		\int_{B_4 \sm B_3} |u(y) - A u(x + y)|^2 \leq C \d
	\]
	and
	\[
		\int_{B_4 \sm B_3} |u(2y) - A u(x + 2y)|^2 = 2^{-n}\int_{B_8 \sm B_6} |u(y) - A u(x + y)|^2 \leq C \d.
	\]
	Using the triangle inequality (with all norms being $L^2(B_{4}\sm B_3, dy)$),
	\begin{align*}
		\|2^{-N_x}&u(2y) - 2^{-N_0}u(2y)\| \leq \|2^{-N_0}u(2y) - u(y)\| + \|u(y) - A u(x + y)\| \\
		&+ \|A u(x + y) - A2^{-N_x} u(x + 2y)\| + \|A 2^{-N_x} u(x + 2y) - 2^{-N_x}u(2y)\| \\
		& \leq C \sqrt{\d},
	\end{align*}
	where we used \eqref{eq:twopointhomi2} and \eqref{eq:twopointhomi4} to bound the first and third terms respectively. Rewriting,
	\[
		|2^{-N_x} - 2^{-N_0}|^2 \int_{B_8 \sm B_6} u^2 \leq C \d.
	\]
	As $|2^{-N_x} - 2^{-N_0}| \geq c(N_+) |N_x - N_0|$, using \eqref{eq:twopointhomi5} this implies that
	\[
		|N_x - N_0|^2 \int_{B_{10} \sm B_1} u^2 \leq C \d,
	\]
	proving \eqref{eq:twopointhomc1}.
\end{proof}

\begin{proof}[Proof of Theorem \ref{thm:l2subspaceapprox}]
	After a one-homogeneous rescaling, we may assume that $r = 1$ and $x = 0$. Furthermore, we may as well assume that $\mu$ is supported on $B_1$ and normalized so that $\mu(B_1) = 1$ (the conclusion is homogeneous in $\mu$ and both sides are integrated over $B_1$).

	We apply Lemma \ref{lem:changeofpoint} to see that for any $x \in \Sigma^H_1 \cap \bar{B}_1$, $N_x(20) \leq C N_+$, and we also have that $\frac{1}{C} H_0(50) \leq H_x(s) \leq C H_0(50)$ from the second conclusion there (combined with Lemma \ref{lem:hdoubling} used repeatedly).
	
	Fix a point $x_0 \in \Sigma^H_1 \cap \bar{B}_1$ such that
	\[
		N_{x_0}(20) - N_{x_0}(1) = \min_{x \in \Sigma^H_1 \cap \bar{B}_1}\left(N_{x}(20) - N_{x}(1)\right).
	\]
	The quantity $N_{x}(r)$ is continuous in $x$ and $\Sigma^H_1\cap \bar{B}_1$ is closed, so the minimum here is attained. We also may as well assume that $N_{x_0}(20) - N_{x_0}(1)\leq \d$ for a small $\d = \d(N_+, n)$ (to be selected below): if not, the right-hand side in the conclusion is bounded from below, the left-hand side is bounded above by $1$, and so the theorem is trivial.
	
	At any $x \in \Sigma^H_1 \cap \bar{B}_1$, integrating \eqref{eq:nprime2} tells us that
	\[
		\int_1^{20} \int_{\partial B_s(x)}\frac{2}{H_x(s) s^{n+2}} |\n u(y) \cdot (y - x) - N_x(s) u(y)|^2\> d\mathcal{H}^{n-1}(y) \leq N_x(20) - N_x(1).
	\]
	By using that $H_x(s) \leq C H_0(50) \leq C H_{x_0}(10)$ and $0\leq N_x(s) - N_x(1) \leq N_{x}(20) - N_x(1)$,
	\[
		\int_{B_{20}(x) \sm B_1(x)} |\n u(y) \cdot (y - x) - N_x(1) u(y)|^2 \leq C H_{x_0}(10) [N_x(20) - N_x(1)].
	\]
	We insert this bound at $x$ and $x_0$ into Lemma \ref{lem:twopointhom}, so that \eqref{eq:twopointhomc2} gives
	\begin{equation}\label{eq:l2subspaceapproxi1}
		\int_{B_{18}(x_0) \sm B_4(x_0)} |\n u(y) \cdot (x - x_0)|^2 \> dy \leq C H_{x_0}(10) [N_x(20) - N_x(1)].
	\end{equation}
	Note how we used that $N_{x_0}(20) - N_{x_0}(1) \leq N_x(20) - N_x(1)$ here.

	Given an affine hyperplane $L = \{ (y - y_0) \cdot \nu = 0 \}$, the definition of $\b = \b_{\mu}(x, r)$ tells us that
	\[
		\b^2 \leq \int_{B_1} |(y - y_0) \cdot \nu |^2 \> d\mu(y).
	\]
	We plug into this formula $\nu = \frac{\n u(z)}{|\n u(z)|}$ for an arbitrary point $z\in\{u>0\}$ and $y_0 = x_0$ (the point selected above):
	\[
		\b^2 |\n u(z)|^2 \leq \int_{B_1} |\n u(z) \cdot (y - x_0)|^2 \> d\mu(y).
	\]
	Now we integrate over $A = \left(B_{10}(x_0) \sm B_4(x_0)\right)\cap \{u>0\}$ in the $z$ variable and interchange the order of integration, to get
	\[
		\b^2 \int_A |\n u(z)|^2 \> dz \leq \int_{B_1} \int_A |\n u(z) \cdot (y - x_0)|^2 \> dz \> d\mu(y).
	\]
	From \eqref{eq:l2subspaceapproxi1}, then,
	\[
		\b^2 \int_A |\n u(z)|^2 \> dz \leq CH_{x_0}(10) \int_{B_1} [N_y(20) - N_y(1)] \> d\mu(y).
	\]
	All that remains, then, is to show that
	\[
		\frac{1}{H_{x_0}(10)}\int_A |\n u(z)|^2 \> dz \geq c > 0.
	\]

	It is helpful here to use the monotonicity of $N_{x_0}(s)$:
	\[
		N_{x_0}(5) \leq N_{x_0}(10) \implies N_{x_0}(5) + V_{x_0}(5) \leq N_{x_0}(10) + V_{x_0}(10) + C \sqrt{\d},
	\]
	where we used Lemma \ref{lem:vvanishing} and $\d \geq N_{x_0}(20) - N_{x_0}(1)$ to bound the $V$ terms. This can be rewritten as
	\[
		\int_{B_{5}(x_0)} |\n u|^2 \leq \frac{1}{2^n}\frac{H_{x_0}(5)}{H_{x_0}(10)} \int_{B_{10}(x_0)} |\n u|^2 + C \sqrt{\d} H_{x_0}(5).
	\]
	From Lemma \ref{lem:hdoubling}, $H_{x_0}(5) \leq H_{x_0}(10)$, so
	\[
		\int_{B_{5}(x_0)} |\n u|^2 \leq \frac{1}{2^n} \int_{B_{10}(x_0)} |\n u|^2 + C \sqrt{\d} H_{x_0}(10),
	\]
	implying that
	\[
		\int_{A} |\n u|^2 \geq \int_{B_{10}(x_0)\sm B_{5}(x_0)} |\n u|^2 \geq (1 - \frac{1}{2^n})\int_{B_{10}(x_0)} |\n u|^2 - C \sqrt{\d} H_{x_0}(10).
	\]
	As $N_{x_0}(10) \geq 1$, $N_{x_0}(10) + V_{x_0}(10) \geq 1 - C \sqrt{\d}$, and so
	\[
		\int_{B_{10}(x_0)} |\n u|^2 \geq c H_{x_0}(10) (1 - C \sqrt{\d}).
	\]
	We arrive at
	\[
		\int_{A} |\n u|^2 \geq c H_{x_0}(10)(1 - C \sqrt{\d}) \geq c H_{x_0}(10)
	\]
which concludes the proof provided that $\d$ has been chosen small enough.
\end{proof}

\subsection{Frequency Drop Dichotomy}

\begin{theorem}\label{thm:dichotomy}
	Let $(u, \chi)$ be a variational solution on $B_{50 r}(x)$, and fix $\d > 0$, $N_+ > 0$ and $t \in [0, r]$. Then there is an $\e = \e(N_+, \d, n) \in (0,1)$ such that the following holds: set
	\[
		\maxN = \max_{y \in \overline{B_r(x)} \cap \Sigma^H_{t}} N_y(u; 20 r) \leq N_+.
	\]
	Then either $\maxN < 1 + \d$, or the set
	\[
		E = \{ y \in \overline{B_r(x)} \cap \Sigma^H_{t} : \maxN - N_y(u; r) < \e \}
	\]
	is contained in a $\d r$-neighborhood of an $n-2$-dimensional affine subspace $L \ss \R^n$.
\end{theorem}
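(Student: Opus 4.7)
The plan is to argue by contradiction and compactness, combining the almost-homogeneity estimates implicit in \eqref{eq:nprime2} with the two-point rigidity Lemma \ref{lem:twopointhom} to reduce to the one-dimensional classification of Theorem \ref{thm:approx1Dlimits}. Suppose the theorem fails. Then there exist $\delta_0>0$, $N_+>0$, a sequence of variational solutions $(u_k,\chi_k)$ on $B_{50 r_k}(x_k)$, values $t_k\in[0,r_k]$, and $\e_k\to 0$ such that $\maxN_k\in[1+\delta_0,N_+]$ while the corresponding sets $E_k$ are not contained in any $\delta_0 r_k$-neighborhood of an $(n-2)$-dimensional affine subspace. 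After a one-homogeneous rescaling reduce to $r_k=1$, $x_k=0$, and after one further bounded rescaling arrange $t_k\leq 1/2$.

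I would then extract $n$ ``spanning'' points by a greedy selection: pick $p_0^k\in E_k$ arbitrarily, and for $j=1,\dots,n-1$ let $p_j^k\in E_k$ maximize the distance to $\mathrm{aff}\{p_0^k,\dots,p_{j-1}^k\}$. Since $E_k$ avoids $\delta_0$-neighborhoods of $(n-2)$-planes and the sequence of maximizing distances is monotone non-increasing in $j$, one has $d(p_j^k,\mathrm{aff}\{p_0^k,\dots,p_{j-1}^k\})\geq\delta_0$ for all $j\geq 1$, so that the Gram--Schmidt orthogonalization of $(p_j^k-p_0^k)_{j=1}^{n-1}$ has all lengths bounded below by $\delta_0$. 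Passing to a subsequence with $p_j^k\to p_j$ and $\maxN_k\to N_\infty\in[1+\delta_0,N_+]$, and rotating, I assume $\mathrm{span}\{p_j-p_0\}_{j=1}^{n-1}=\mathrm{span}(e_1,\dots,e_{n-1})$.

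At each center $p_j^k$, the definition of $E_k$ together with the monotonicity from Lemma \ref{lem:frequency} gives $N_{p_j^k}(u_k;20)-N_{p_j^k}(u_k;1)\leq 2\e_k$. Integrating \eqref{eq:nprime2} and using Lemma \ref{lem:hdoubling}, Lemma \ref{lem:vvanishing} and Lemma \ref{lem:changeofpoint} to comparably bound $H_{p_j^k}(u_k;\cdot)$ across scales in $[1,20]$ and across different $j$ by a common constant $h_k^2 := H_{p_0^k}(u_k;1)$, and to drive $V_{p_j^k}\to 0$, yields
\[
\int_{B_{20}(p_j^k)\setminus B_1(p_j^k)}\bigl|\nabla u_k\cdot(y-p_j^k)-\maxN_k\, u_k\bigr|^2\,dy\leq C(N_+,n)\, h_k^2\, \e_k
\]
for every $j\in\{0,\dots,n-1\}$. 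After translating $p_0^k$ to the origin and, if $|p_j^k-p_0^k|\geq 1$, rescaling by a factor $\rho_j^k\in[1,2]$ so that the rescaled point lies in $B_1$, Lemma \ref{lem:twopointhom} applies with centers $0$ and $(p_j^k-p_0^k)/\rho_j^k$, and its second conclusion \eqref{eq:twopointhomc2} translates back to
\[
\int_{B_{9\rho_j^k}(p_0^k)\setminus B_{2\rho_j^k}(p_0^k)}\bigl|\nabla u_k\cdot(p_j^k-p_0^k)\bigr|^2\,dy\leq C\, h_k^2\, \e_k.
\]
Taking the common sub-annulus $B_9(p_0^k)\setminus B_4(p_0^k)$ and inverting using the quantitative spanning of $(p_j^k-p_0^k)_{j=1}^{n-1}$ gives
\[
\sum_{i=1}^{n-1}\int_{B_9(p_0^k)\setminus B_4(p_0^k)}|\partial_i u_k|^2\,dy\leq C(\delta_0,N_+,n)\, h_k^2\, \e_k.
\]

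Setting $v_k(y)=u_k(p_0^k+y)/h_k$ and applying one more one-homogeneous rescale by a fixed constant depending only on the annulus positions, the resulting sequence satisfies the hypotheses of Lemma \ref{lem:renormalizedlimits} with limit frequency $N_\infty\geq 1+\delta_0$ (the scales at the ends of the annulus both produce frequencies tending to $N_\infty$ by monotonicity and the drop estimate), and the previous display becomes $\sum_{i=1}^{n-1}\int_A|\partial_i v_k|^2\to 0$ on the annulus $A$ of Theorem \ref{thm:approx1Dlimits}. That theorem then forces $N_\infty=1$, contradicting $N_\infty\geq 1+\delta_0>1$. The main technical hurdle is the bookkeeping in the two-point rigidity step: the almost-homogeneity estimates at different centers $p_j^k$ live on different annuli, the distances $|p_j^k-p_0^k|$ vary in a bounded range, and the ratios $H_{p_j^k}/H_{p_0^k}$ must be absorbed into universal constants via Lemma \ref{lem:changeofpoint} before Lemma \ref{lem:twopointhom} can be applied uniformly in $j$ and $k$; once this is lined up, the final reduction to Theorem \ref{thm:approx1Dlimits} is immediate.
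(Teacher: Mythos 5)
Your proposal is correct and follows essentially the same strategy as the paper: compactness/contradiction, greedy selection of $n$ quantitatively spanning points in $E_k$, integration of \eqref{eq:nprime2} to get almost-homogeneity at each selected center, the two-point rigidity Lemma \ref{lem:twopointhom} to convert this into smallness of $n-1$ transverse derivatives on a common annulus, and Theorem \ref{thm:approx1Dlimits} to force the limiting frequency to equal $1$, contradicting $N_\infty\geq 1+\d>1$. The only cosmetic differences are the explicit $\rho_j^k$-rescaling you introduce to fit Lemma \ref{lem:twopointhom}'s hypothesis $x\in B_1$ (the paper applies the isometry and translation more tersely) and the precise annulus radii and normalization ($h_k^2=H_{p_0^k}(u_k;1)$ versus $H_{a_0^k}(u_k;18)$, which are comparable by Lemmas \ref{lem:hdoubling} and \ref{lem:changeofpoint}).
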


\begin{proof}
	After a rescaling and translation, it is enough to consider $r = 1$ and $x = 0$. We assume that neither alternative holds for a sequence $(u_k, \chi_k)$ of variational solutions and numbers $\e_k \rightarrow 0$, and arrive at a contradiction. We may assume that the numbers $\maxN_k \rightarrow \bar{N} \geq 1 + \d$. Applying Lemma \ref{lem:changeofpoint} and Lemma \ref{lem:hdoubling}, for any $y \in B_1 \cap \Sigma^H_1(u_k)$ and $s \in [1, 20]$, we have $\frac{1}{C}H_0(u_k; 50) \leq H_y(u_k; s) \leq C H_0(u_k; 50)$.

	For each $k$, the set $E_k = \{ y \in \bar{B}_1 \cap \Sigma^H_{1}(u_k) : \maxN_k - N_y(u_k; 1) < \e_k \}$ fails to be contained in a $\d$-neighborhood of any $n-2$-dimensional affine subspace. We may inductively select points in $E_k$ as follows: select two points $a_0^k$, $a_1^k\in E_k$ which are a distance of at least $\d$ apart (if impossible, then $E_k \ss B_\d(a_0)$). Then find $a_2^k\in E_k$ a distance at least $\d$ from $a_0^k + \text{span}\{a_1^k - a_0^k\}$, and so on until $a_{n-1}$. Let us, for each $k$, select an orthonormal basis $e_i^k$ for $\R^n$ so that $e_1^k$ is parallel to $a_1^k - a_0^k$, $e_2^k$ is in the span of $a_1^k - a_0^k$ and $a_2^k - a_0^k$, and so on. It is straightforward to check that each $e_i^k$ can be expressed as a linear combination of the $a_i^k - a_0^k$ with coefficients of absolute value $\leq \frac{1}{\d}$; we then, for each $k$, perform an isometry so that $e_i^k = e_i$ are all the same (all quantities are invariant under isometries);
here $(e_i)_{i=1}^n$ is the standard basis of $\R^n$.

	At each of these points, we have that $N_{a_i^k}(u_k; 20) - N_{a_i^k}(u_k; 1) < \e_k$. Integrating \eqref{eq:nprime2} and also using that $|\bar{N} - N_{a_i^k}(u_k; 20)| \leq \e_k + o_k(1)$,
	\[
		\frac{1}{H_{a_0^k}(u_k; 18)} \int_{B_{20}(a_i^k) \sm B_1(a_i^k)} |\n u_k(y) \cdot y - \bar{N} u(y)|^2 = o_k(1).
	\]
	Now apply Lemma \ref{lem:twopointhom} $n-1$ times, centered at $a_0^k$ and using $a_i^k$ as the second point, to get that
	\[
		\frac{1}{H_{a_0^k}(u_k; 18)} \int_{B_{18}(a_0^k) \sm B_4(a_0^k)} |\n u_k(y) \cdot (a_i^k - a_0^k)|^2 = o_k(1).
	\]
	From the choice of basis for $\R^n$, this implies that
	\[
		\sum_{i = 1}^{n - 1} \frac{1}{H_{a_0^k}(u_k; 18)} \int_{B_{18}(a_0^k) \sm B_4(a_0^k)} |\p_i u_k(y)|^2 = o_k(1).
	\]

	Consider the sequence of renormalized solutions $v_k(y) = \frac{u_k(a_0^k + 18 y)}{\sqrt{H_{a^k_0}(u_k; 18)}}$. These satisfy the hypotheses of Lemma \ref{lem:renormalizedlimits} and Theorem \ref{thm:approx1Dlimits}, so in particular
	\[
		\lim_{k\rightarrow \infty} N_{a_0^k}(u_k; 9) = 1.
	\]
However by construction this limit also equals
$\bar{N} \geq 1 + \d$, which is a contradiction.
\end{proof}

\section{The Naber-Valtorta procedure} \label{sec:NV}

Given a variational solution on $B_{25r}(x)$ with $B_r(x) \cap \Sigma^H_t \neq \varnothing$ and $t \in [0, r]$, define $\maxN_t(x, r) = \max\{N_x(u; 20r) : x \in \Sigma^H_t \cap \overline{B_r(x)} \}$.

\begin{lemma}\label{lem:volestimateN1}
	Let $(u, \chi)$ be a variational solution on $B_{25r}(x)$, with $B_{r}(x) \cap \Sigma^H_t \neq \varnothing$ for some $t \in [0, r]$. Then there is a $\d = \d(n)$ and $C_* = C_*(n)$ such that if $\maxN_t(x, r) < 1 + \d$, we have:
	\begin{enumerate}
		\item For each $s \in [t,r]$, every countable collection of disjoint balls $B_s(x_i)$ with $x_i \in \Sigma^H_t \cap B_{r}(x)$ consists of at most $C_* \frac{r^{n-1}}{s^{n-1}}$ balls.
		\item For each $s \in [t,r]$, the volume of the $s$-neighborhood of $\Sigma^H_t \cap B_{r}(x)$, $B_s(\Sigma^H_t \cap B_{r}(x))$, is at most $C_* s r^{n-1}$.
		\item If $t = 0$, then $\cH^{n-1}(\Sigma^H \cap B_{r}(x)) \leq C_* r^{n-1}$ and $\Sigma^H \cap B_{r}$ is countably $\cH^{n-1}$-rectifiable.
	\end{enumerate}
\end{lemma}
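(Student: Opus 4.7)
The plan is to apply the Naber--Valtorta discrete and rectifiable Reifenberg machinery, using Theorem \ref{thm:l2subspaceapprox} as the source of $L^2$-$\beta$-number bounds and the smallness of $\maxN_t(x,r) - 1$ as the source of summability across scales. After a one-homogeneous rescaling I may assume $x = 0$ and $r = 1$.

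First I would set up a uniform \emph{frequency budget}. For any $y \in \Sigma^H_t \cap \overline{B_1}$, Lemma \ref{lem:frequency} gives that $\rho \mapsto N_y(u;\rho)$ is monotone nondecreasing and at least $1$ on its domain of definition, while Lemma \ref{lem:changeofpoint} bounds $N_y(u; 20\rho) \leq C\,\maxN_t(0,1)$. Provided $\delta = \delta(n)$ is small enough, every such $N_y$ is confined to an interval $[1, 1+\delta']$ for some $\delta' = \delta'(\delta, n)$ that can be made arbitrarily small, and hence telescopes: $\sum_k [N_y(u; 20 \cdot 2^{-k}) - N_y(u; 2^{-k})] \leq C\delta'$ uniformly in $y$. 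This is precisely the quantity that appears on the right of Theorem \ref{thm:l2subspaceapprox} after integration against $d\rho/\rho$.

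For (1), fix $s \in [t,1]$ and a disjoint family $\{B_s(y_i)\}$ with $y_i \in \Sigma^H_t \cap \overline{B_1}$, form the measure $\mu = \sum_i s^{n-1}\delta_{y_i}$, and apply Theorem \ref{thm:l2subspaceapprox} at every center $y$ of the support and every dyadic scale $\rho \in [s,1]$. Integrating the resulting bound against $d\rho/\rho$ and swapping the order of integration via Fubini yields $\int_s^1 \beta^2_\mu(y,\rho)\,d\rho/\rho \leq C\delta'$ for every $y \in \supp \mu$. This is exactly the hypothesis of the Naber--Valtorta discrete Reifenberg theorem, whose conclusion is $\mu(B_1) \leq C$, i.e.\ the family contains at most $C s^{-(n-1)}$ balls. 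Conclusion (2) follows from (1) by a Vitali-type covering: a maximal disjoint family of balls $B_{s/5}(y_i)$ centered on $\Sigma^H_t \cap B_1$ contains at most $C s^{-(n-1)}$ elements by (1), and by maximality the doubled family covers $\Sigma^H_t \cap B_1$; hence the $s$-neighborhood is covered by $C s^{-(n-1)}$ balls of radius $O(s)$, giving Lebesgue measure at most $C s$.

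For (3), I would run the same argument with $t = 0$ but apply the \emph{rectifiable} Naber--Valtorta Reifenberg theorem to $\mathcal{H}^{n-1}\mres(\Sigma^H \cap B_1)$. The finiteness of this measure (needed to invoke the theorem) is first extracted from (2) by letting $s \to 0$, which delivers the $(n-1)$-dimensional upper Minkowski content bound and hence $\mathcal{H}^{n-1}(\Sigma^H \cap B_1) \leq C$. The same $\beta^2$ input derived above then verifies the Reifenberg hypothesis for this measure and yields countable $(n-1)$-rectifiability. The main obstacle I anticipate is largely a bookkeeping one: to verify the Naber--Valtorta hypotheses with uniform constants across all dyadic scales without the doubling constants of Lemmas \ref{lem:hdoubling} and \ref{lem:changeofpoint} compounding. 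The smallness of $\delta$ must simultaneously keep $N_y$ near the floor value $1$, keep the telescoped frequency budget below the Reifenberg threshold, and prevent changes of centerpoint from leaking frequency outside the window $[1, 1+\delta']$; reconciling these three smallness requirements is the delicate step.
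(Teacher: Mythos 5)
Your overall strategy — feeding the $L^2$-subspace approximation bound of Theorem~\ref{thm:l2subspaceapprox} into the Naber--Valtorta Reifenberg machinery via a telescoping frequency budget, then deducing (2) by Vitali covering and (3) by the rectifiable Reifenberg theorem — matches the paper's and is the right skeleton. Parts (2) and (3) are handled correctly. However, there is a genuine gap at the heart of the argument for part (1).

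The step that fails is the claim that "Integrating the resulting bound against $d\rho/\rho$ and swapping the order of integration via Fubini yields $\int_s^1 \beta^2_\mu(y,\rho)\,d\rho/\rho \leq C\delta'$ for every $y \in \supp\mu$." Theorem~\ref{thm:l2subspaceapprox} produces the estimate
\[
\beta^2_\mu(y,\rho) \leq \frac{C}{\rho^{n-1}} \int_{B_\rho(y)} \bigl[N_z(20\rho) - N_z(\rho)\bigr]\, d\mu(z),
\]
and when you sum this over dyadic scales $\rho = 20^{-k}$ the frequency increments do telescope, but each term carries the weight $20^{k(n-1)} \mu(B_{20^{-k}}(y))$. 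Without additional input, the only a priori control on $\mu(B_{20^{-k}}(y))$ is the trivial packing bound $\mu(B_\rho(y)) \lesssim (\rho/s)^n s^{n-1}$ coming from the $2s$-separation of the centers, which gives $20^{k(n-1)}\mu(B_{20^{-k}}(y)) \lesssim 20^{-k}/s$ — unbounded as $\rho$ grows past $s$. So the telescoping alone does not close the estimate; you need to know $\mu(B_\rho(y)) \lesssim \rho^{n-1}$ at every scale $\rho \in [s,1]$, which is exactly the Ahlfors upper-regularity statement that part (1) is asserting. As stated, your argument is circular.

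The paper breaks this circularity by a downward induction on scale: assuming the cardinality bound of (1) holds with constant $C_*$ for all $s \geq s_*$, one applies the inductive hypothesis to the rescaled solutions $u(y + 20^{-k}\cdot)/20^{-k}$ to obtain $\mu(B_{20^{-k}}(y)) \leq C_* 20^{k(1-n)}$ for $k\geq 1$, which is precisely the weight that makes the dyadic sum telescope to $C C_*^2 \delta$; one then chooses $\delta$ small relative to the discrete Reifenberg threshold and $C_*$ equal to the Reifenberg output constant $D(n)$, so the inductive step closes at scale $s_*/20$. This induction is the essential missing idea in your proposal. (A smaller point: you invoke Lemma~\ref{lem:changeofpoint} to deduce $N_y(20\rho) \leq C\maxN_t(0,1)$, but this is unnecessary — the hypothesis $\maxN_t(0,1) < 1+\delta$ already bounds $N_y(u;20)$ directly by its definition as a maximum over $y \in \Sigma^H_t \cap \overline{B_1}$, and monotonicity of $N_y$ then confines it to $[1,1+\delta)$ at smaller radii.)
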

\begin{proof}
	We rescale as usual so that $x = 0$ and $r = 1$. When proving the first two conclusions, we may as well assume that $s < s_0(n)<<1$, for otherwise they follow trivially by choosing $C(n)$ large enough. We will prove (1) in the following way: we show that if it holds for some $s_*$ and every $s \geq s_*$, then it also holds (with the same constant $C_*$, which will be chosen below) for $s = \frac{1}{20}s_*$. Then by induction, this implies (1) for every $s \geq t$.

	Take $U = \{B_s(x_i)\}$ a disjoint collection of balls with $x_i \in \Sigma^H_t \cap B_1$, and let $\mu = \sum_i s^{n-1} \d_{x_i}$ be an associated discrete measure. Then from Theorem \ref{thm:l2subspaceapprox},
	\[
		\b_\mu^2(y, \rho) \leq \frac{C}{\rho^{n-1}} \int_{B_\rho(y)} [N_z(20 \rho) - N_z(\rho)] \> d\mu(z)
	\]
	for every $t \leq \rho \leq 1$ and $y \in \Sigma^H_t \cap B_1$. On the other hand, if $\rho < s$, we instead have 
	\begin{equation}\label{eq:volestimateN1i3}
		\b_\mu(y, \rho) = 0,
	\end{equation}
	as at most one of the $x_i$ can be contained in $B_\rho(y)$ (the distance between any two of them is at least $2s$ by assumption), and so taking any $n-1$-dimensional affine hyperplane passing through that $x_i$ in the definition of $\b_\mu(y, \rho)$ makes it vanish.

	Using the elementary inequality $\b_\mu(y, a \rho) \leq C(n)\b_\mu(y, \rho)$ for $a \in [\frac{1}{20}, 1]$,
	\begin{align*}
		\int_0^1 \b_\mu^2(y, \rho) \frac{d\rho}{\rho} &\leq C \sum_{k = 0}^\infty \b_{\mu}^2(y, 20^{-k})\\
		& \leq C \sum_{k = 0}^K 20^{k (n - 1)} \int_{B_{20^{-k}}(y)}[N_z(20 \cdot 20^{-k}) - N_z(20^{-k})] \> d\mu(z)
	\end{align*}
	where $K$ is the integer with $20^{- K - 1} < s \leq 20^{ - K}$. 
	
	We integrate this with respect to $y$ with respect to the measure $\mu$, starting by estimating the integrals on the right with Fubini's theorem:
	\begin{align*}
		\int\int_{B_{20^{-k}}(y)}&[N_z(20^{1-k}) - N_z(20^{-k})] \> d\mu(z)d\mu(y) \\
		&= \int \int\chi_{B_{20^{-k}}(y)}(z) \> d\mu(y)[N_z(20^{1-k}) - N_z(20^{-k})] \> d\mu(z) \\
		&\leq \sup_{y \in B_1} \mu(B_{20^{-k}}(y)) \int [N_z(20^{1-k}) - N_z(20^{-k})] \> d\mu(z).
	\end{align*}
	Our hypothesis that (1) holds with $s \geq s_*$ can be applied to the scaled variational solution $\frac{u(y + 20^{-k} \cdot)}{20^{-k}}$ and the collection of balls $\{B_{20^{k}s}(20^{k}(x_i - y)) : x_i \in B_{20^{-k}(y)}\}$, as long as $k \geq 1$; this tells us that there are at most 
$C_* (20^{k}s)^{1 - n}$ of the points $x_i$ in $B_{20^{-k}}(y)$, for any $y \in B_1$. This leads to a simple estimate
	\begin{equation} \label{eq:volestimateN1i1}
		\mu(B_{20^{-k}}(y)) \leq C_* (20^{k}s)^{1 - n}s^{n-1} \leq C_* 20^{k (1 - n)} \textrm{ for } k \geq 1.
	\end{equation}
	When $k = 0$, a similar estimate still holds, with a larger constant, by covering $B_1 \cap \Sigma^H_t$ by $C(n)$ balls $B_{1/20}(y_i)$ centered at points in $B_1 \cap \Sigma^H_t$, and on each of those balls applying \eqref{eq:volestimateN1i1}. This gives
	\begin{equation} \label{eq:volestimateN1i2}
		\mu(B_1(y)) \leq \mu(B_1) \leq C(n) C_*.
	\end{equation}
	It follows that
	\[
		20^{k(n-1)} \int\int_{B_{20^{-k}}(y)}[N_z(20^{1-k}) - N_z(20^{-k})] \> d\mu(z)d\mu(y) \leq C C_* \int_{B_1}[N_z(20^{1-k}) - N_z(20^{-k})] \> d\mu(z).
	\]

	Summing this up,
	\begin{align*}
		\int_{B_1}\int_0^1 \b_\mu^2(y, \rho) \frac{d\rho}{\rho}d\mu(y) & \leq C C_* \sum_{k = 0}^K\int_{B_1}[N_z(20^{1-k}) - N_z(20^{-k})] \> d\mu(z) \\
		& = C C_* \int_{B_1} [N_z(20) - N_z(20^{-K})] \> d\mu(z) \\
		& \leq C C_* \int_{B_1} [N_z(20) - 1] \> d\mu(z) \\
		& \leq C C_*^2 \d,
	\end{align*}
	where the last step used our hypothesis that $N_z(20) \leq 1 + \d$ for $z \in \Sigma^H_t \cap B_1$ and \eqref{eq:volestimateN1i2}. Given any $B_\s(p)$ with $\s \in [s, 1]$ which has $\mu(B_\s(p)) > 0$, we may also perform the same estimate to the rescaled variational solution $\frac{u(p + \s \cdot)}{\s}$ and the balls $B_{s/\s}((x_i - p)/\s)$, which leads to
	\[
		\int_{B_{\s}(p)} \int_0^\s \b_\mu^2(y, \rho) \frac{d\rho}{\rho}d\mu(y) \leq C C_*^2 \d \s^{n-1}
	\]
	after scaling back. Note that the same estimate is still valid for $\s < s$, due to \eqref{eq:volestimateN1i3}. We now choose 
$C_* = D(n)$ of
the discrete Reifenberg theorem \cite{NV}[Theorem 3.4]
and we choose $\d$ small enough to obtain from \cite{NV}[Theorem 3.4] that 
	\[
		\#(U) \leq C_* (s_*/20)^{1 - n},
	\]
completing our inductive step.

	The conclusion (2), with a larger constant, is immediate from (1) by using a Vitali cover. The Hausdorff measure estimate in (3) follows from (2) applied with $t = 0$, which gives the Minkowski content bound $|B_s(B_1 \cap \Sigma^H)|\leq C_*s$.

	To see the rectifiability part of (3), let $\nu = \cH^{n-1}\mres (\Sigma^H \cap B_1)$. Applying the Hausdorff measure bound tells us that on any ball $B_\s(p)$ with $\s \leq 1$,
	\[
		\nu(B_\s(p)) \leq C_* \s^{n-1}.
	\]
	For any $y \in \Sigma^H \cap B_1$ and $\r \leq 1$, applying Theorem \ref{thm:l2subspaceapprox} leads to
	\[
		\b^2_\nu(y, \r) \leq \frac{C}{\r^{n-1}} \int_{B_\r(y)} [N_z(20 \r) - N_z(\r)] \> d \nu(z).
	\]
	Estimating similarly to $\mu$ above,
	\begin{align*}
		\int_{B_1} \int_0^1 \b^2_\nu(y, \r) \frac{d\r}{\r} \> d\nu(y) &\leq C \sum_{k = 0}^\infty \sup_{y \in B_2}\frac{\nu(B_{20^{-k}}(y))}{20^{k(n-1)}}\int_{B_1} [N_z(20^{1-k}) - N_z(20^{-k})] \> d \nu(z) \\
		& \leq C C_* \int_{B_1} [N_z(20) - N_z(0+)] \> d\nu(z)\\
		& \leq C C_*^2 \d.
	\end{align*}
	Applying to rescalings of $u$,
	\[
		\int_{B_{\s}(p)} \int_0^\s \b^2_\nu(y, \r) \frac{d \r}{\r} \> d\nu(y) \leq C C_*^2 \d \s^{n-1},
	\]
	so as long as $\d$ is chosen small enough we may apply the rectifiable Reifenberg theorem \cite{NV}[Theorem 3.3] to conclude.
\end{proof}

It is also possible to show a packing estimate similarly to \cite{NV}, but we avoid the added technicality.

We are now in a position to prove the main theorem of this and the previous section.
\begin{theorem}\label{thm:main}
	Let $(u, \chi)$ be a variational solution on $B_1$, with $0 \in \Sigma^H_t$ 
for some $t\in [0,1]$
and $N_0(u; 1) \leq N_+$. Then there is a constant $C = C(N_+, n)$ such that the following holds:
	\begin{enumerate}
		\item For every $s \geq t$, $|B_s(\Sigma^H_t \cap B_{1/100}(x))| \leq C s$.
		\item If $t = 0$, then $\cH^{n-1}(\Sigma^H \cap B_{1/100}(x)) \leq C$ and $\Sigma^H \cap B_{1/100}$ is countably $\cH^{n-1}$-rectifiable.
		\item $N_x(u; 0+) = 1$ at $\cH^{n-1}$-a.e. $x \in \Sigma^H$.
	\end{enumerate}
\end{theorem}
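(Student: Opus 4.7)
The plan is to deduce (1), (2) and (3) from a Naber--Valtorta covering iteration that combines Theorem \ref{thm:dichotomy} with the low-frequency estimates of Lemma \ref{lem:volestimateN1}. By Lemma \ref{lem:changeofpoint}, for every $y \in \Sigma^H_t \cap B_{1/20}$ and $s \leq 1/2$ one has $N_y(u;s) \leq N'_+$ with $N'_+ := \max\{1, C N_+\}$, so $\maxN_t$ is bounded by $N'_+$ throughout. I fix $\delta = \delta(n) > 0$ small (as demanded by Lemma \ref{lem:volestimateN1}), let $\varepsilon = \varepsilon(N'_+, \delta, n)$ be the constant from Theorem \ref{thm:dichotomy}, and let $K = \lceil (N'_+ - 1)/\varepsilon \rceil$.

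The main step is to build a cover of $\Sigma^H_t \cap B_{1/100}$ by balls $B_{r_i}(x_i)$ with $x_i \in \Sigma^H_t$, each equipped with a \emph{budget} $N^i_\star \in \{N'_+, N'_+ - \varepsilon, N'_+ - 2\varepsilon, \ldots\}$ satisfying $\maxN_t(x_i, r_i) \leq N^i_\star$. Starting from $B_{1/100}$ with budget $N'_+$, I iteratively process each active ball. If its budget lies below $1+\delta$ or its radius is below $s$, I declare it \emph{terminal} and control its Minkowski contribution via Lemma \ref{lem:volestimateN1}(2). Otherwise Theorem \ref{thm:dichotomy} (applied to an appropriate rescaling) produces an affine $(n-2)$-plane $L$ and a bad set $E$ contained in the $\delta r_i$-neighborhood of $L$; I cover $E$ by at most $C(n) \delta^{2-n}$ balls of radius $\delta r_i$ with centers on $L \cap B_{r_i}(x_i)$, each inheriting budget $N^i_\star$, and I cover the remaining points of $\Sigma^H_t \cap B_{r_i}(x_i)$ by a Vitali family of at most $C(n)$ balls of radius $r_i/40$, each inheriting the lowered budget $N^i_\star - \varepsilon$ --- the drop being legitimate because at those centers $N_y(u; r_i) \leq \maxN_t - \varepsilon$ and $N_y$ is monotone in scale. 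Along every branch the budget can drop at most $K$ times before termination. The crucial algebraic point is that the $L$-cover replaces an $(n-1)$-content $r_i^{n-1}$ by $C \delta^{2-n}(\delta r_i)^{n-1} = C \delta \cdot r_i^{n-1}$, which is a contraction once $\delta$ is small. Summing Lemma \ref{lem:volestimateN1}(2) over the terminal balls yields $|B_s(\Sigma^H_t \cap B_{1/100})| \leq C s$, which is (1). Sending $s \to 0$ with $t = 0$ gives the $\cH^{n-1}$ bound in (2), and rectifiability follows by running the same iteration against $\nu = \cH^{n-1} \mres (\Sigma^H \cap B_{1/100})$ and invoking the rectifiable Reifenberg theorem, exactly as in Lemma \ref{lem:volestimateN1}(3), with the Jones number estimate supplied by Theorem \ref{thm:l2subspaceapprox}.

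For (3), I fix $\Lambda > 0$ and set $S_\Lambda := \{x \in \Sigma^H \cap B_{1/100} : N_x(u; 0+) \geq 1 + \Lambda\}$. Monotonicity of $N_x$ in $r$ forces $N_x(r) \geq 1 + \Lambda$ at every scale, so every active ball containing such an $x$ has budget $N^i_\star \geq N_x(20 r_i) \geq 1 + \Lambda$, and so it cannot terminate via the low-frequency branch. Moreover, $x$ can lie in the Vitali branch of the dichotomy only if $N^i_\star - \varepsilon \geq 1 + \Lambda$, which can occur at most $K$ times along any chain. From some point on, therefore, $x$ is forced into the thin strip $E$ of the $(n-2)$-plane at every stage. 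As each such stage contracts the $(n-1)$-content of the cover of $S_\Lambda$ by a factor of $C \delta < 1$, iterating drives the content to zero, so $\cH^{n-1}(S_\Lambda) = 0$. Since $\Lambda > 0$ is arbitrary and $N_x(u; 0+) \geq 1$ on $\Sigma^H$ by Lemma \ref{lem:frequency}, (3) follows.

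\textbf{Main obstacle.} The principal technical difficulty is the bookkeeping of the iteration: ensuring that the contraction factor $\delta$ genuinely dominates the Vitali multiplicative constants, that the working scales of successive applications of Theorem \ref{thm:dichotomy} are mutually consistent (so that $\maxN_t$ at the child scale truly respects the inherited budget), and that the cover exhausts $\Sigma^H_t$ without gaps. Verifying these compatibility conditions --- while keeping the total $(n-1)$-content summable so that the Reifenberg hypotheses are met uniformly --- is the main work, although the overall structure follows the template of \cite{NV}.
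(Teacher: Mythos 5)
Your proposal follows the same Naber--Valtorta strategy as the paper: an iterated cover driven by the frequency-drop dichotomy (Theorem \ref{thm:dichotomy}), with the low-frequency Reifenberg estimate (Lemma \ref{lem:volestimateN1}) controlling terminal balls. However there is a genuine gap in the budget-tracking step. When you cover $\Sigma^H_t \cap B_{r_i}(x_i) \setminus E$ by Vitali balls $B_{r_i/40}(y)$ and assign them the lowered budget $N^i_\star - \e$, your justification (``$N_y(u;r_i) \leq \maxN - \e$ at the centers, and $N_y$ is monotone in scale'') only controls the frequency \emph{at the center} $y$. What the budget must bound is $\maxN_t(y, r_i/40) = \max\{ N_z(u; r_i/2) : z\in \overline{B}_{r_i/40}(y)\cap\Sigma^H_t\}$, and since $r_i/40 \gg \d r_i$ when $\d$ is small, the ball $B_{r_i/40}(y)$ will typically still contain points $z \in E$. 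For such $z$, monotonicity yields only $N_z(u;r_i/2) \leq N_z(u;r_i) \leq N^i_\star$, not $\leq N^i_\star - \e$. So the budget does not in fact drop on these Vitali balls, and the termination mechanism (``each branch suffers at most $K$ drops'') has no traction.

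The paper sidesteps this by covering $B_{r}(x) \cap \Sigma^H_t$ with children of a \emph{single} radius $\d_2 r$, and declaring a child a ``large frequency drop'' ball only when it is \emph{disjoint} from $E$: disjointness is exactly what forces $N_y(u;20\d_2 r) \leq N_y(u;r) \leq \maxN - \e$ for \emph{every} point $y$ of the child, so that $\maxN_t$ of the child really does drop. With uniform radii the large-drop branch contributes up to $\sim \d_2^{-n}$ balls (total content $\sim \d_2^{-1} r_i^{n-1}$, not $O(1)\cdot r_i^{n-1}$), so a per-generation contraction of $(n-1)$-content does not hold as you claim; instead the paper bounds the \emph{number} of balls in generation $k$ by $C r_k^{2-n-\tau}$ using the fact that at most $C\d_2^{2-n}$ children intersect $E$, and then sums $r_k^{n-1}\cdot\#(U^k)$ over $k$. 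Two secondary points: your $E$-cover balls are centered on $L$, whereas the iteration (and the discrete Reifenberg inputs inside Lemma \ref{lem:volestimateN1}) require centers in $\Sigma^H_t$ --- an easy recentering/doubling fix; and for (3), once the covering is done the paper simply notes $N_x(u;0+)\leq 1+\d_1$ on all terminal balls and sends $\d_1\to 0$, which is equivalent to, but tidier than, your contraction argument on $S_\Lambda$.
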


\begin{proof}
	By Lemma \ref{lem:changeofpoint}, $\maxN_t(0, 1/50) \leq C N_+$.

	We will proceed by building, inductively, a sequence of coverings of $\Sigma^H_t$ by balls. We start with the ball $B_r(x) = B_{1/50}$ (with $x \in \Sigma^H_t$ and $r \geq t$), and check the value of $\maxN_t(x, r)$. If this quantity is less than $1 + \d_1$ (with $\d_1$ fixed and smaller than the constant $\delta$ in Lemma \ref{lem:volestimateN1}), we call this a \emph{terminal} ball, add it to a collection $W^T$, and do not subdivide it any further. If not, we apply Theorem \ref{thm:dichotomy} with $\d_2 \leq \min\{\d_1, \frac{1}{20}\}$ to be selected below, to see that
	\[
		E = \{y \in B_r(x) \cap \Sigma^H_t : \maxN_t(x, r) - N_y(r) < \e \} \ss B_{\d_2 r}(V),
	\]
	where $V$ is an $n-2$-dimensional affine subspace. 
	
	We cover $B_r(x) \cap \Sigma^H_t$ by a large number of balls $B_{\d_2 r}(x_i)$, with $x_i \in B_r(x) \cap \Sigma^H_t$ and the balls having finite overlap. On these, there are two possibilities for the behavior of $\maxN_t(x_i, \d_2 r)$. If $B_{\d_2 r}(x_i)$ is disjoint from $E$, then for every $y \in B_{\d_2 r}(x_i) \cap \Sigma^H_t$,
	\[
		N_{y}(20 \> \d_2 r) \leq N_y(r) \leq \maxN_t(x, r) - \e,
	\]
	so $\maxN_t(x_i, \d_2 r) \leq \maxN_t(x, r) - \e$; we say these balls are ones with \emph{large frequency drop}. The other possibility is that $B_{\d_2 r}(x_i)$ intersects $E$, in which case we only know $\maxN_t(x_i, \d_2 r) \leq \maxN_t(x, r)$; these balls have \emph{small frequency drop}.

	Starting with $B_{1/50}$ and assuming that $B_{1/50}$ is not terminal, we apply this procedure to get a collection $U^1$ of balls, and then apply it to all the balls in $U^1$ to get a collection $U^2$, and so on. Each of these collections $U^k$ has only balls of radius $r_k = \d_2^k \frac{1}{50}$ and finite overlap, and we continue on until $r_{k+1} < s$, 
	so we end up with a finite number of balls for each $s$. Setting $K := \inf\{ k\geq 1: r_k\geq s\}$, we obtain $r_K\in [s, s/\d_2]$ as the radius of the final generation of non-terminal balls. The collection of terminal balls may have balls of varying radii, each greater than or equal to $s$.

	Let us estimate the $q$-dimensional size of the balls in $U^k$. In each generation, a ball is subdivided into at most $C_L \d_2^{-n}$ balls, of which at most $C_S \d_2^{2 - n}$ can have small frequency drop, due to the fact that $E\subset B_{\delta_2 r}(V)$; importantly, $C_S, C_L$ depend only on $n$ (and not on $\d_2$, or $\e(\d_2)$). If we track each ball's ancestry, we see that it could not have had large frequency drop more than $D := D(n, N_+, \d_2) := \lceil C N_+ /\e \rceil$ times: otherwise that would contradict that $N_{x_i}(t) \geq 1$ (indeed, one of its ancestors would have been terminal). So the total number of balls in generation $U^k$ is bounded by
	\[
		\#(U^k) \leq [C_S \d_2^{-n}]^D [C_L \d_2^{2-n}]^{k} \leq C(C_S, D, \delta_2) C_L^k \d_2^{(2 - n) k} \leq C(n, N_+, \d_2) r_k^{2 - n - \t},
	\]
	where we choose any $\t \in (0, 1)$ and then $\d_2$ small enough in terms of $\t$ and $n$ only so that $C_L < \d_2^{-\t}$. Therefore,
	\begin{equation}\label{tau1}
		\sum_{B_{r_k}(x_i) \in U^k} r_k^{q} = r_k^{q} \> \#(U^k)  \leq C r_k^{q + 2 - n - \t}.
	\end{equation}

Let us define the collection $W^s := \bigcup_{k\geq 1}U^k$ of all balls in all generations
having radii $\geq s$ as well as the subcollection $W^T$ of all terminal balls in $W^s$.
Provided that $q > n - 2 + \t$, 
we obtain that
\begin{equation}\label{tau2}
		\sum_{B_{r_i}(x) \in W^s} r_i^q \leq C \sum_{k = 0}^\infty \d_2^{(q + 2 - n - \t) k} \leq C.
	\end{equation}

	We are now ready to prove the theorem. For (1), note that
	\[
		B_s(\Sigma^H_t \cap B_{1/100}(x)) \ss \bigcup_{B_{r_K}(x_i) \in U^K} B_{2 r_K}(x_i) \cup \bigcup_{B_{r_i}(x) \in W^T}B_s(\Sigma^H_t \cap B_{r_i}(x)).
	\]
	Estimating the volumes, the first union has, using \eqref{tau1} with $q=n$,
	\[
		\left|\bigcup_{B_{r_K}(x_i) \in U^K} B_{2 r_K}(x_i)\right| \leq C r_K^{2 - \t} \leq C s^{2 - \t},
	\]
	while the second, using Lemma \ref{lem:volestimateN1} as well as \eqref{tau1} with $q=n-1$, satisfies
	\[
		\left|\bigcup_{B_{r_i}(x) \in W^T}B_s(\Sigma^H_t \cap B_{r_i}(x))\right| \leq \sum_{B_{r_i}(x) \in W^T} |B_s(\Sigma^H_t \cap B_{r_i}(x))| \leq C s \sum_{B_{r_i}(x) \in W^T} r^{n-1}_i \leq C s.
	\]
	This establishes (1).

	For (2), the Hausdorff measure estimate follows immediately from (1) using that $\Sigma^H \ss \Sigma^H_t$. If we denote by $A$ the portion of $\Sigma^H$ which is not contained in the union of the terminal balls, we have shown that in fact $\cH^{n-2 + \t}(A) \leq C$, so $\cH^{n-1}(A) = 0$. The rest of $\Sigma^H$ is contained in a countable union of countably $\cH^{n-1}$-rectifiable sets by Lemma \ref{lem:volestimateN1} applied to each terminal ball, and so is countably $\cH^{n-1}$-rectifiable.

	Finally, for (3) observe that on $\Sigma^H \cap B_{r_i}(x_i)$, where $B_{r_i}(x_i)$ is a terminal ball, we have by construction that $\maxN_0(x_i, r_i) \leq 1 + \d_1$, and so $N_x(u; 0+) \leq 1 + \d_1$ on $B_{r_i}(x_i)$. This is true on all terminal balls, which cover $\Sigma^H \sm A$, so $N_x(u; 0+) \leq 1 + \d_1$ for $\cH^{n-1}$-a.e. point in $\Sigma^H$. This is true for every $\delta_1$ small, so as countable unions of $\cH^{n-1}$-null sets are $\cH^{n-1}$-null sets, (3) follows.
\end{proof}

\section{A complete identification of the \texorpdfstring{measure $\Delta u$}{Laplacian measure} of variational solutions}\label{sec:corollaries}
Theorem \ref{thm:main} allows us to completely identify $\Delta u$ of
 variational solutions; more precisely (see Theorem \ref{thm:distributional}),
\[\Delta u = \cH^{n-1}\mres \p^* \{ u > 0\} + 2 \a(n) \sqrt{H_x(u; 0+)} \cH^{n-1}\mres \Sigma^H.\]
In later sections we will in turn apply this result to any limit 
of classical solutions of the Bernoulli problem (see Section \ref{sec:compactness})
and any limit of the related singular perturbation problem (see Section \ref{sec:perturbation}).
In particular, the {\em full topological free boundary of any of those limits
is countably $\cH^{n-1}$-rectifiable and has locally finite $\cH^{n-1}$-measure.}
This result also gives information about the harmonic measure
of the set $\{u > 0\}$ (see Remark \ref{rem:harmonicmeasure}).

In preparation of Theorem \ref{thm:distributional} we need some analysis of the
regular free boundary (Section \ref{sec:anreg}), some analysis of singular points
(Section \ref{sec:ansing}) as well as an identification of the limit $\chi$
(Section \ref{sec:identchi}). 

\subsection{Analysis of regular points}\label{sec:anreg}

Let $(u, \chi)$ be a variational solution on $\W$. By the upper semicontinuity of $x\mapsto M_x(u; 0+)$ (see Proposition \ref{prop:Mlimits}), the set $\Sigma^H$ is relatively closed, so $\W \sm \Sigma^H$ is an open set. In this section we recall some results on $\W \sm \Sigma^H$, and in particular on the portion of the free boundary $\p \{u > 0\}$ contained in it (this is exactly the portion of the free boundary with density $M_x(0+) < |B_1|$). These mostly follow existing literature, such as \cite{CS,C87,C89,D}.

To begin with, observe that in the interior of $\{u = 0\}$, $\chi$ must be locally constant. Indeed, \eqref{eq:variationalformula} gives
\[
	\int \chi \dvg \xi = 0
\]
for all $\xi \in C_c^\infty(\{u = 0\}^\circ, \R^n)$, which implies that $\chi$ is constant on each connected component. If $\chi = 0$ in a component, we have the ``natural'' behavior $M_x(u; 0+) \equiv 0$ there (by Proposition \ref{prop:Mlimits}), while if $\chi = 1$, we have the ``pathological'' case of $M_x(u; 0+) \equiv |B_1|$ instead. The latter is certainly possible: $(u, \chi) = (0, 1)$ is a variational solution. However, $x\mapsto M_x(u; 0+)$ is upper semicontinuous (see Proposition \ref{prop:Mlimits}), so at any $x \in \partial \left(\{u = 0\}^\circ\right)$ at the boundary of any such pathological connected component, we would also have $M_x(u; 0+) = |B_1|$, and so $x\in \Sigma^H$. In Section \ref{sec:ansing}, we will show this is actually impossible, and so $\chi = 0$ on $\{u = 0\}^\circ$ unless $u \equiv 0$. In this section we simply note that on a connected open set with $\Sigma^H = \varnothing$, unless $u \equiv 0$ we have that $M_x(u; 0+) < |B_1|$.

It will be helpful to have the following notion of \emph{viscosity solution} of our problem.
\begin{definition}[Viscosity Solution]
Let $u$ be a continuous function $u : \W \rightarrow [0, \infty)$ which is harmonic in $\{u > 0\}$. Then $u$ is a viscosity solution if any smooth function $\phi$ satisfying $\phi \leq u$ ($\phi_+ \geq u$) in $B_r(x) \ss \W$, $\phi(x) = 0$ and $x \in \p \{u > 0\}$ also satisfies $|\n u| \leq 1$ ($|\n u| \geq 1$).
\end{definition}

It is easy to see that even when $n = 1$, the function $x_+ + \frac{1}{2}x_-$ is a viscosity solution but not a variational solution (for any $\chi$), whereas $2|x_n|$ is a variational solution but not a viscosity solution. However, outside the set $\Sigma^H$ variational solutions are viscosity solutions (this will be shown below in Lemma \ref{lem:regvarisvisc}), and on this set viscosity solutions admit a partial regularity theorem (see Corollary \ref{cor:regular}).

\begin{lemma}\label{lem:density0char}
	Let $(u, \chi)$ be a variational solution in $\W$, and $B_{2r}(x) \cc \W$. Then if $u(x) = 0$,
	\[
		M_x(u; 0+) \in \{0\} \cup [|B_{1}|/2, |B_1|],
	\]
	with $M_x(u; 0+) = 0$ only if $x$ is in the interior of $\{u = 0\}$.
Moreover, if $x\in \p\{ u>0\}$ and the blow-up sequence $u(x+r\cdot)/r\to 0$ as $r\to 0$,
then $M_x(u; 0+)=|B_1|$.
\end{lemma}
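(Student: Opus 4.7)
The plan is to perform a blowup analysis at $x$ and read off $M_x(u; 0+)$ from the possible limit profiles.

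\textbf{Blowup compactness.} Given $r_k \to 0$, the rescalings $u_{x, r_k}$ are uniformly Lipschitz with $u_{x, r_k}(0) = 0$, so by Proposition \ref{prop:monotone} they are uniformly bounded in $W^{1, 2}_{\mathrm{loc}}$. Along a subsequence $u_{x, r_k} \to u_0$ locally uniformly and weakly in $W^{1, 2}_{\mathrm{loc}}$, with $u_0$ one-homogeneous and nonnegative; the argument at the end of the proof of Proposition \ref{prop:Mlimits} (via \eqref{eq:intbyparts}) upgrades this to strong $W^{1, 2}_{\mathrm{loc}}$ convergence. Lemma \ref{lem:BV} together with BV compactness gives, on a further subsequence, $\chi_{x, r_k} \to \chi_0$ in $L^1_{\mathrm{loc}}$ with $\chi_0$ a characteristic function satisfying $\chi_0 \geq \chi_{\{u_0 > 0\}}$. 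Strong convergence lets me pass \eqref{eq:variationalformula} to the limit, so $(u_0, \chi_0)$ is a variational solution on $\R^n$. Using one-homogeneity of $u_0$ together with \eqref{eq:intbyparts}, a short computation yields $M_x(u; 0+) = M_0(u_0; 1) = \int_{B_1} \chi_0$.

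\textbf{Classification of limits.} If $u_0 \not\equiv 0$, then $\phi = u_0|_{S^{n-1}}$ is a nonnegative eigenfunction of the spherical Laplacian with eigenvalue $n - 1$ on each connected component of $\{\phi > 0\}$. By Faber--Krahn on the sphere (the first Dirichlet eigenvalue of a proper spherical cap strictly exceeds $n - 1$, with equality on hemispheres), each such component contains a hemisphere, giving $|\{u_0 > 0\} \cap B_1| \geq |B_1|/2$ and hence $M_x(u; 0+) \geq |B_1|/2$. If $u_0 \equiv 0$, the variational identity collapses to $\int \chi_0 \dvg \xi = 0$ for all $\xi \in C_c^{0, 1}(\R^n; \R^n)$, forcing $\chi_0$ to be locally constant; as a characteristic function on the connected space $\R^n$ it is identically $0$ or $1$, giving $M_x(u; 0+) \in \{0, |B_1|\}$. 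Combining the two cases yields the trichotomy $M_x(u; 0+) \in \{0\} \cup [|B_1|/2, |B_1|]$.

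\textbf{The extreme case and the degenerate clause.} To prove that $M_x(u; 0+) = 0$ implies $x \in \mathrm{int}(\{u = 0\})$, argue by contradiction. If $x \in \overline{\{u > 0\}}$, the classification rules out the nondegenerate case, so every subsequential blowup is degenerate with $\chi_0 \equiv 0$, whence $u_{x, r} \to 0$ and $\chi_{x, r} \to 0$ in $L^1_{\mathrm{loc}}$ as $r \to 0$ (without passing to a subsequence). The upper semicontinuity of $y \mapsto M_y(u; 0+)$ (Proposition \ref{prop:Mlimits}) rules out values in $[|B_1|/2, |B_1|]$ at all nearby points, so every free boundary point close to $x$ is also degenerate with density zero. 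The main obstacle is then to promote the measure-theoretic vanishing ``$\chi_{x, r} \to 0$ in $L^1_{\mathrm{loc}}$'' to the topological statement ``$\{u > 0\}$ is empty on an open ball around $x$''. The natural attempt is to pick $y_k \to x$ with $u(y_k) > 0$, use the Lipschitz continuity of $u$ to insert $B_{u(y_k)/C_V}(y_k) \subset \{u > 0\} \subset \{\chi = 1\}$, and compare volumes against $\chi_{x, r} \to 0$; however at a degenerate point $u(y_k)/|y_k - x| \to 0$, so this scaling alone is borderline and one genuinely needs to exploit the variational identity — for instance through a test vector field concentrated near the $y_k$, or through the strict monotonicity of $M_x(s)$ from Proposition \ref{prop:monotone} applied at $y_k$ and passed through the upper semicontinuity — to force $u$ to vanish on an open ball. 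Granting this, the final clause of the lemma is immediate: if $x \in \partial \{u > 0\}$ and $u_{x, r} \to 0$, then we are in the degenerate case with $M_x(u; 0+) \in \{0, |B_1|\}$, and since $x$ cannot lie in $\mathrm{int}(\{u = 0\})$ the value $0$ is excluded, leaving $M_x(u; 0+) = |B_1|$.
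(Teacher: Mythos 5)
Your blowup compactness and classification steps are correct and track the paper closely: you use the monotonicity of $M$ to get one-homogeneous limits, strong $W^{1,2}_{\mathrm{loc}}$ convergence, BV compactness for $\chi_{x,r_k}$, and Sperner's eigenvalue inequality on the sphere to show $|\{u_0 > 0\} \cap B_1| \geq |B_1|/2$ whenever $u_0 \not\equiv 0$; for $u_0 \equiv 0$ your observation that the inner-variation identity degenerates to $\int \chi_0 \dvg \xi = 0$, forcing $\chi_0$ constant, is also exactly the mechanism the paper uses in the final clause. So the dichotomy $M_x(u;0+) \in \{0\} \cup [|B_1|/2, |B_1|]$ is established.

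However, there is a genuine gap in the third step, which you yourself flag (``Granting this\dots''): you do not actually prove that $M_x(u;0+) = 0$ forces $x$ to lie in the interior of $\{u = 0\}$. Your sketched attempts (test vector fields near $y_k$, strict monotonicity of $M$ at $y_k$) are not carried out, and the borderline scaling difficulty you correctly identify is real. The paper's route around this is a clean measure-theoretic argument that you are missing: by upper semicontinuity of $y \mapsto M_y(u;0+)$, the set $\{M_y(u;0+) \leq 0\}$ is relatively open, so there is a ball $B_\delta(x)$ on which every point $y$ either has $u(y)=0$ and $M_y(0+)=0$ (density $0$ for $\{\chi=1\}$) or has $u(y)>0$ (density $1$). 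Thus the essential boundary $\partial^e \{\chi = 1\}$ is empty in $B_\delta(x)$; since $\chi$ is BV (Lemma~\ref{lem:BV}), Federer's theorem and the relative isoperimetric inequality force $\chi \equiv 0$ or $\chi \equiv 1$ a.e.\ on $B_\delta(x)$. The latter is excluded by density $0$ at $x$, so $\chi = 0$ a.e., whence $u = 0$ a.e.\ and (by continuity) $u \equiv 0$ on $B_\delta(x)$. Without this ingredient, the ``only if $x$ is interior'' claim, and hence the final clause of the lemma, is not proved.

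Once that gap is closed, your concluding observation for the degenerate clause is correct and follows the same logic as the paper: $u_{x,r} \to 0$ puts you in the degenerate case with $M_x(0+) \in \{0, |B_1|\}$, and $x \in \partial\{u > 0\}$ rules out $0$ via the interior characterization.
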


\begin{proof}
	Assume that $x = 0$. Let us use the same notation as the proof of Proposition \ref{prop:Mlimits}, with blow-ups of $u$ along a subsequence converging to a homogeneous function
$u_0$ of degree $1$. We first show that $M_0(u; 0+) = 0$ may only happen if $u_0 \equiv 0$ (along each convergent subsequence), and $M(u; 0+) \geq |B_1|/2$ otherwise. 
	
	Indeed, if $u_0$ is nonzero, then its trace on $\p B_1$ is a solution to 
	\[
		- \Delta_{S^{n-1}} u_0 = (n - 1) u_0 \qquad \text{on } \{ u_0 > 0\}.
	\]
	In other words, it is a first eigenfunction of the set $\{ u_0 > 0\} \cap \p B_1$ with eigenvalue $n-1$. An inequality of Sperner \cite{S} guarantees that any such set must have volume greater than or equal to that of a spherical cap with the same eigenvalue, which in this case is a half-sphere with volume $|\p B_1|/2$. As $u_0$ is $1$-homogeneous, we see that $|\{u_0 > 0\} \cap B_1| \geq |B_1|/2$, and so
	\[
		|B_1|/2 \leq |\{u_0 > 0\} \cap B_1| \leq  \liminf_{k\to\infty} \frac{|\{u_{r_k} > 0\}|}{r_k^n} \leq \lim_{k\to\infty} \frac{|\{\chi_k = 1\}|}{r_k^n} = M_0(u; 0+).
	\]

	The function $x \mapsto M_x(u; 0+)$ is upper semicontinuous on $\W$ (see Proposition \ref{prop:Mlimits}).
Therefore $\{x: M_x(u; 0+) \leq 0\} = \{x : M_x(u; 0+) < |B_1|/4\}$ is relatively open. Recall that $M_x(u; 0+) = - \infty$ if and only if $u(x) > 0$, and so this set is the union of $\{u > 0\}$ and $\{x : M_x(u; 0+) = 0\}$. Take a point $x$ with $M_x(u; 0+) = 0$, and assume that $M_y(u; 0+)\leq 0$ for $y \in B_\d(x)$ for a small $\d$. Then at every point $y \in B_\d(x)$ we must either have $M_y(u; 0+) = 0$ and hence
	\[
		\lim_{s \to 0} \frac{|\{\chi = 1\} \cap B_s(y)|}{s^n} = 0,
	\]
	or $u(y) > 0$ and so
	\[
		\lim_{s \to 0} \frac{|\{\chi = 1\} \cap B_s(y)|}{s^n} = |B_1|.
	\]
	This, however, implies that the essential boundary of $\{\chi = 1\}$ is empty in $B_\d(x)$, and so by a theorem of Federer we must have either $\chi \equiv 1$ on $B_\d(x)$ or $\chi \equiv 0$ on $B_\d(x)$, Lebesgue a.e. (indeed, \cite{F}[4.5.11] implies $\{\chi = 1\}$ is a set of finite perimeter; \cite{F}[4.5.6(1)] implies its Gauss-Green boundary measure is trivial, being supported on the essential boundary; the relative isoperimetric inequality \cite{F}[4.5.3] then gives the claim). The former is impossible, since at least one point, $x$, has density zero: this means $\chi = 0$ on a set of positive Lebesgue measure. The latter means that $M_y(u; 0+) = 0$ and $u(y) = 0$ for Lebesgue a.e. $y \in B_\d(x)$, and so by continuity $u = 0$ on all of $B_\d(x)$.

The last statement follows from the fact that
$u(x+r\cdot)/r\to 0$ as $r\to 0$ implies the limit of $\chi(x+r\cdot)$ as $r\to 0$
must be a constant function. As by the already proved part of the lemma,
$M_x(u; 0+) \ne 0$, it follows that $M_x(u; 0+)=1$.
\end{proof}

\begin{lemma} \label{lem:regperimeterest}
	Let $(u, \chi)$ be a variational solution on a connected open set $\W$, and assume $\Sigma^H = \varnothing$. Then either $u \equiv 0$ or $\chi = \chi_{\{u > 0\}}$ a.e., so the set $\{u > 0\}$ has locally finite perimeter. Moreover,
	\[
		\cH^{n-1}(\p \{u > 0\} \cap B_r(x)) \leq C(C_V) r^{n-1}
	\]
	for any $B_{2r}(x) \cc \W$.
\end{lemma}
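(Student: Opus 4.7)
The plan splits into identifying $\chi$ on $\W \setminus \p\{u>0\}$ and establishing the $\cH^{n-1}$-bound on $\p\{u>0\}$; the a.e.\ identification $\chi = \chi_{\{u>0\}}$ then follows from the latter since a smooth hypersurface has zero Lebesgue measure. For the identification, I would test \eqref{eq:variationalformula} against $\xi \in C_c^{0,1}(\{u=0\}^\circ;\R^n)$: since $\n u \equiv 0$ there, this yields $\int \chi \dvg \xi = 0$, so $\chi$ is locally constant on each connected component of $\{u=0\}^\circ$. Suppose a component $C$ has $\chi\equiv 1$: for $y \in C$ with $B_r(y)\subset C$ a direct computation gives $M_y(u;r)=|B_1|$, hence $M_y(u;0+)=|B_1|$ throughout $C$. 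Any $x \in \p C\cap\W$ lies in $\overline{C}\setminus C \subset \p\{u>0\}\subset\overline{\{u>0\}}$ (since any point of $\{u=0\}^\circ$ lying in $\overline{C}$ necessarily belongs to $C$), so upper semicontinuity of $x \mapsto M_x(u;0+)$ from Proposition \ref{prop:Mlimits} forces $M_x(u;0+)=|B_1|$ and $x\in\Sigma^H$, contradicting the hypothesis unless $\p C\cap\W = \varnothing$. Connectedness of $\W$ then forces $C = \W$ (so $u\equiv 0$) or no such component exists, in which case $\chi = \chi_{\{u>0\}}$ pointwise off $\p\{u>0\}$.

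For the perimeter bound I would invoke Theorem \ref{thm:weiss03}. The key inclusions $\Sigma_{**},\Sigma_z\subseteq\Sigma^H$ are direct: at $x\in\Sigma_{**}$ the blow-up $u_0(y)=\theta|y_n|$ satisfies $D_0(u_0;1)=(\theta^2+1)|B_1|$ and $H_0(u_0;1)=\theta^2\int_{\p B_1}y_n^2\, d\cH^{n-1}=\theta^2|B_1|$, so $M_0(u_0;1)=|B_1|$ and hence $M_x(u;0+)=|B_1|$ by the scaling invariance of $M$; at $x\in\Sigma_z$ the final statement of Lemma \ref{lem:density0char} directly gives $M_x(u;0+)=|B_1|$. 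Under $\Sigma^H=\varnothing$ both singular sets are empty, Weiss's decomposition collapses to $\Delta u=\cH^{n-1}\mres\p^*\{u>0\}$, and (using the classification in Weiss's theorem of $1$-homogeneous tangents into three types, the last two now excluded) $\p\{u>0\}=\p^*\{u>0\}$ is locally a smooth hypersurface. Combining with the interior estimate $\Delta u(B_{3r/2}(x))\leq C(C_V)r^{n-1}$ established inside the proof of Lemma \ref{lem:BV} gives
\[
\cH^{n-1}(\p\{u>0\}\cap B_r(x))=\Delta u(B_r(x))\leq C(C_V)r^{n-1},
\]
and smoothness of $\p\{u>0\}$ shows it has zero Lebesgue measure, upgrading the pointwise identification from the first step to $\chi = \chi_{\{u>0\}}$ $\cL^n$-a.e.

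The principal obstacle is the appeal to the full content of Theorem \ref{thm:weiss03}, which packages both the smoothness of $\p^*\{u>0\}$ and the exhaustion $\p\{u>0\}=\p^*\{u>0\}\cup\Sigma_{**}\cup\Sigma_z$ (so that after excluding the last two sets, nothing singular remains). A self-contained alternative is to classify $1$-homogeneous variational solutions directly via Sperner's eigenvalue inequality applied to the blow-up's trace on $S^{n-1}$: each positive component must be a hemisphere of eigenvalue exactly $n-1$, and a jump computation across $\{y\cdot\nu=0\}$ using \eqref{eq:variationalformula} rules out unbalanced two-hemisphere profiles, leaving only $0$, $\alpha(y\cdot\nu)_+$, and $\alpha|y\cdot\nu|$; under $\Sigma^H=\varnothing$ only the half-plane case survives, after which the Alt-Caffarelli/De Silva regularity theory (\cite{AC,C87,D}) referenced at the start of this section yields local smoothness of $\p\{u>0\}$ and the same $\cH^{n-1}$-bound.
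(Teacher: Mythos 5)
Your first step---testing \eqref{eq:variationalformula} against $\xi \in C_c^{0,1}(\{u=0\}^\circ;\R^n)$ to deduce that $\chi$ is locally constant on $\{u=0\}^\circ$, then using upper semicontinuity of $x\mapsto M_x(u;0+)$ to force $\p C\cap\W \ss\Sigma^H = \varnothing$ for any component $C$ on which $\chi\equiv 1$---is exactly the discussion the paper places just before the lemma, and the inclusions $\Sigma_{**},\Sigma_z\ss\Sigma^H$ you derive are correct.

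The second half does not close, however. Theorem \ref{thm:weiss03} is a decomposition of the \emph{measure} $\Delta u$, not a covering of the \emph{set} $\p\{u>0\}$. Even granting $\Sigma_{**}=\Sigma_z=\varnothing$, the conclusion $\Delta u=\cH^{n-1}\mres\p^*\{u>0\}$ plus Lemma \ref{lem:BV} only bounds $\cH^{n-1}(\p^*\{u>0\}\cap B_r(x))$; it says nothing about $\p\{u>0\}\sm\p^*\{u>0\}$, which could a priori have positive $\cH^{n-1}$-measure while carrying none of the measure $\Delta u$. Your ``self-contained alternative'' then asserts a classification of $1$-homogeneous tangents into $0$, $\alpha(y\cdot\nu)_+$, and $\alpha|y\cdot\nu|$ via Sperner's inequality, but that is false in general dimension: Sperner forces at most two components of $\{v>0\}\cap\p B_1$ and forces the two-component case to be two half-spheres, yet the one-component case admits genuine singular cones that are not half-planes (the phenomenon underlying the $n\le 4$ threshold for minimizers, e.g. De Silva--Jerison, and which is only shown to be low-dimensional in Corollary \ref{cor:regular} via dimension reduction). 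Such cone points have $M_x(u;0+)<|B_1|$, so they are \emph{not} excluded by $\Sigma^H=\varnothing$, and your argument gives no bound for the portion of $\p\{u>0\}$ they may occupy. Note also that $\p\{u>0\}=\p^*\{u>0\}$ is stronger than even Corollary \ref{cor:regular} claims (that corollary only bounds the singular set's Hausdorff dimension by $n-3$), and Corollary \ref{cor:regular} logically follows this lemma in the paper.

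The paper's proof is both shorter and completely bypasses tangent classification. By Lemma \ref{lem:density0char} and Proposition \ref{prop:Mlimits}, at every $x\in\p\{u>0\}$ one has $M_x(u;0+)\in[|B_1|/2,\,|B_1|)$, which is precisely the statement that the Lebesgue density of $\{\chi=1\}$ at $x$ lies in $[1/2,1)$; hence $\p\{u>0\}\ss\p^e\{\chi=1\}$, the essential boundary. Lemma \ref{lem:BV} gives $\chi\in BV$ with $\int_{B_r(x)}|\n\chi|\le C(C_V)r^{n-1}$, and Federer's theorem (\cite{F}[4.5.6]) identifies $\cH^{n-1}\mres\p^e\{\chi=1\}$ with $|\n\chi|$. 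This yields the $\cH^{n-1}$-bound on all of $\p\{u>0\}$ directly, which in turn gives $|\p\{u>0\}|=0$ and hence $\chi=\chi_{\{u>0\}}$ a.e. The essential-boundary step is the idea your proof is missing.
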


\begin{proof}
	As discussed at the start of this section, we have that $\chi = \chi_{\{u > 0\}}$ on $\W \sm \p \{u > 0\}$, and on $\p \{u > 0\}$ we have $M_x(0+) \in [|B_1|/2, |B_1|)$. In particular, on $\p \{u > 0\}$ the Lebesgue density of $\{\chi = 1\}$ lies in $[1/2, 1)$, and so $\p \{u > 0\}$ is contained in the essential boundary $\partial^e\{\chi = 1\}$. From Lemma \ref{lem:BV}, this implies that
	\[ 
		\cH^{n-1}(\p \{u > 0\} \cap B_r(x)) \leq \cH^{n-1}(\partial^e\{\chi = 1\} \cap B_r(x)) = \int_{B_r(x)}|\n \chi| \leq C(C_V) r^{n-1}
	\]
	for any $B_{2r}(x) \cc \W$ (using Federer's theorem \cite{F}[4.5.6(1)]). This implies that $|\p \{u > 0\}| = 0$, $\chi = \chi_{\{u > 0\}}$ a.e., and that $\{u > 0\}$ has finite perimeter.
\end{proof}

\begin{lemma} \label{lem:regvarisvisc}
	Let $(u, \chi)$ be a variational solution on $\W$, and assume $\Sigma^H = \varnothing$. Then $u$ is a viscosity solution on $\W$.
\end{lemma}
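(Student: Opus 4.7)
The plan is a blow-up argument at every topological free-boundary point to show all blow-up limits are half-planes of slope exactly one, followed by the standard viscosity comparison. Since $u$ is harmonic on $\{u > 0\}$ by Proposition \ref{prop:subharmonic} and the viscosity conditions apply only at $x_0 \in \partial \{u > 0\}$, it suffices to handle such points. The hypothesis $\Sigma^H = \varnothing$ combined with Lemma \ref{lem:density0char} yields $M_{x_0}(u; 0+) \in [|B_1|/2, |B_1|)$, giving both non-degeneracy $\sup_{B_r(x_0)} u \geq c r$ (via the $|\{u > 0\} \cap B_r(x_0)| \geq c r^n$ bound) and the absence of any degenerate blow-up.

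First I would produce a blow-up limit. For $r_k \to 0$, a subsequence of the Lipschitz rescalings $u_{x_0, r_k}(y) = u(x_0 + r_k y)/r_k$ converges locally uniformly and strongly in $W^{1,2}_{\mathrm{loc}}$ to a nonzero, nonnegative, Lipschitz, one-homogeneous function $u_0$ harmonic on $\{u_0 > 0\}$ (using the argument in the proof of Proposition \ref{prop:Mlimits}). From Lemma \ref{lem:regperimeterest} I have $\chi = \chi_{\{u > 0\}}$ a.e., and the characteristic functions $\chi_{\{u_{x_0, r_k} > 0\}}$ converge strongly in $L^1_{\mathrm{loc}}$ to $\chi_{\{u_0 > 0\}}$ (any sequence of characteristic functions converging weakly to a characteristic function converges strongly in $L^1$). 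These convergences suffice to pass the variational identity \eqref{eq:variationalformula} to the limit, so $(u_0, \chi_{\{u_0 > 0\}})$ is itself a variational solution.

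Next I would verify the sub-solution property. Given $\phi \in C^\infty$ with $\phi \leq u$ on some $B_r(x_0)$ and $\phi(x_0) = 0$, the case $\nabla\phi(x_0) = 0$ is immediate. Otherwise set $\nu = \nabla\phi(x_0)/|\nabla\phi(x_0)|$, $L(y) = \nabla\phi(x_0) \cdot y$; the rescalings $\phi_{r_k}(y) = \phi(x_0 + r_k y)/r_k \to L$ locally uniformly, so passing to the limit in $\phi_{r_k} \leq u_{x_0, r_k}$ gives $L \leq u_0$ on $\R^n$ and hence $\{L > 0\} \subseteq \{u_0 > 0\}$ on $S^{n-1}$. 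Because $u_0|_{S^{n-1}}$ restricted to each connected component of $\{u_0 > 0\} \cap S^{n-1}$ is a positive first Dirichlet eigenfunction of eigenvalue $n-1$, Sperner's spherical inequality forces each component to have spherical measure at least $|S^{n-1}|/2$; combined with $M_{x_0}(u; 0+) < |B_1|$ this forces $\{u_0 > 0\} \cap S^{n-1}$ to be connected. Strict monotonicity of the first Dirichlet eigenvalue on the sphere applied to $\{L > 0\} \subseteq \{u_0 > 0\}$, both with $\lambda_1 = n-1$, yields equality up to measure zero, whence $u_0 = \alpha (y \cdot \nu)_+$ for some $\alpha > 0$. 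Testing the limiting variational identity with $\xi(y) = \eta(y) \nu$ for arbitrary $\eta \in C_c^\infty(\R^n)$ reduces it to $(1 - \alpha^2)\int_{\{y\cdot\nu = 0\}}\eta\,d\mathcal{H}^{n-1} = 0$, forcing $\alpha = 1$; then $L(\nu) \leq u_0(\nu)$ yields $|\nabla\phi(x_0)| \leq 1$. The super-solution property is handled symmetrically: $\nabla\phi(x_0) = 0$ would give $u \leq C|y - x_0|^2$ near $x_0$, contradicting non-degeneracy, so $\nabla\phi(x_0) \neq 0$; the reversed inclusion $\{u_0 > 0\} \subseteq \{L > 0\}$ combined with Sperner's lower bound $|\{u_0 > 0\} \cap S^{n-1}| \geq |S^{n-1}|/2 = |\{L > 0\} \cap S^{n-1}|$ immediately forces equality, $\alpha = 1$ from the same variational-identity computation, and $L_+(\nu) \geq u_0(\nu)$ yields $|\nabla\phi(x_0)| \geq 1$.

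The main difficulty lies in the sub-solution case, where bridging the gap between the a priori density bound $M_{x_0}(u; 0+) \in [|B_1|/2, |B_1|)$ and the precise half-plane structure of the blow-up requires the interplay between the one-sided inclusion forced by the touching $\phi$, the Sperner-based connectedness of $\{u_0 > 0\} \cap S^{n-1}$, and strict Dirichlet-eigenvalue monotonicity on the sphere; identifying the slope then forces passing the variational identity to the limit, which in turn relies on both the strong $W^{1,2}_{\mathrm{loc}}$-convergence of $u_{x_0, r_k}$ and the $L^1_{\mathrm{loc}}$-convergence of $\chi_{\{u_{x_0, r_k} > 0\}}$ to $\chi_{\{u_0 > 0\}}$.
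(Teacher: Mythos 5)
Your overall plan mirrors the paper's proof---blow up at a free-boundary point, use Sperner's inequality to identify the shape of the homogeneous blow-up, pass the domain-variation identity to the limit to determine the slope, and compare with the linearization of the test function. There is, however, a genuine gap in how you treat the limiting characteristic function. You assert that $\chi_{\{u_{x_0,r_k}>0\}}$ converges strongly in $L^1_{\mathrm{loc}}$ to $\chi_{\{u_0>0\}}$, justified by the remark that a sequence of indicator functions converging weakly to an indicator function also converges strongly. The remark is true but does not apply: the $L^1_{\mathrm{loc}}$ limit $\chi_0$ extracted via the BV bound of Lemma \ref{lem:BV} is indeed an indicator function (being an a.e.\ pointwise limit of $\{0,1\}$-valued functions), but nothing a priori identifies it with $\chi_{\{u_0>0\}}$. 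Pointwise lower semicontinuity gives only $\chi_{\{u_0>0\}}\leq\chi_0$, and the excess $\{\chi_0=1\}\cap\{u_0=0\}$ need not have measure zero at this stage---the blow-up pair $(u_0,\chi_0)$ is a variational solution which may well have $\Sigma^H(u_0)\neq\varnothing$, so Lemma \ref{lem:regperimeterest} cannot be invoked for it. Consequently the limiting variational identity involves $\chi_0$, not $\chi_{\{u_0>0\}}$, and the computation with $\xi=\eta\,\nu$ that you use to conclude $\alpha=1$ is testing the wrong equation.

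The repair is short and is exactly what the paper does: carry $\chi_0$ as an unknown. The Sperner argument identifying $u_0=\alpha(y\cdot\nu)_+$ with $\alpha>0$ does not use $\chi_0$ and survives; then testing the limiting identity with $\xi=\eta\,\nu$ shows that $\nabla\chi_0$ is supported on $\{y\cdot\nu=0\}$, so $\chi_0$ is constant on each half-space with $\alpha^2=\chi_0|_{\{y\cdot\nu>0\}}-\chi_0|_{\{y\cdot\nu<0\}}$. Since $\chi_0\in\{0,1\}$ a.e., the only possibilities are $\alpha=1$ with $\chi_0=\chi_{\{y\cdot\nu>0\}}$, or $\alpha=0$; the latter is excluded either because $\alpha>0$ from the Sperner step, or---as the paper argues---because $M_{x_0}(u;0+)\in[\,|B_1|/2,\,|B_1|\,)$ together with Proposition \ref{prop:Mlimits} forces both $\{\chi_0=0\}$ and $\{\chi_0=1\}$ to have positive measure. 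With this fix your proof coincides with the paper's; your direct use of $M_{x_0}(u;0+)<|B_1|$ together with Sperner to rule out two-component blow-ups, in place of the paper's argument with $\alpha_{\pm}$, is a minor but valid streamlining.
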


\begin{proof}
	Let $\phi \leq u$ ($\phi_+ \geq u$) on $B_r(x)$, $\phi(x) = u(x) = 0$, and $x \in \p \{u > 0\}$, and assume without loss of generality that if the first inequality holds then $|\n \phi(x)|\neq 0$. The fact that $x \in \p \{u > 0\}$ implies that $M_x(u; 0+) \geq |B_1|/2$ by Lemma \ref{lem:density0char}, while as $\Sigma^H = \varnothing$ we have $M_x(u; 0+) < |B_1|$ (recall that $\Sigma^H$ is exactly the portion of $\p \{u > 0\}$ with $M_x(u; 0+) = |B_1|$).
	
	Then as in the proof of Proposition \ref{prop:Mlimits}, along a sequence $s_k$ we have that
	\[
		u_k(y) = \frac{u(x + s_k y)}{s_k} \rightarrow u_0(y)
	\]
	locally uniformly and strongly in $W^{1,2}$, where $u_0$ is a $1$-homogeneous function which is harmonic on $\{u_0 > 0\}$. By Lemma \ref{lem:BV}, if $\chi_k(y) = \chi(x + s_k y)$, then $\|\chi_k\|_{BV(B_1)} \leq C$, and so up to a subsequence $\chi_k \rightarrow \chi_0 \in BV$ in $L^1_{\mathrm{loc}}$ and a.e.. We may use the $W^{1, 2}$ strong convergence to also pass the variational identity \eqref{eq:variationalformula} to the limit in the following sense: for every $\xi \in C_c^{0, 1}(\R^n; \R^n)$,
	\[
		\int \left(|\n u_0|^2 \dvg \xi - 2 \n u_0 \cdot D\xi \n u_0\right) = - \int \chi_0 \dvg \xi.
	\]
	We also know that $\chi_{\{u_0 > 0\}} \leq \liminf_{k\to\infty} \chi_{\{u_k > 0\}} \leq \chi_0$ a.e., using only that $\chi_{\{u > 0\}} \leq \chi$ and pointwise convergence.
	
	On the other hand, clearly
	\[
		\lim_{s_k \to 0+} \frac{\phi(x + s_k y)}{s_k} = \n \phi(x) \cdot y,
	\]
	and so, by homogeneity of $u_0$, $\n \phi(x) \cdot y \leq u_0(y)$ ($(\n \phi(x) \cdot y)_+ \geq u_0(y)$). Choose coordinates so that $\n \phi(x) \cdot y = \b y_n$, $\b \geq 0$. These inequalities imply that $u$ has the form $u(y) = \a_+ (y_n)_+ + \a_- (y_n)_-$ for two numbers $\a_+, \a_-$, with $\b \leq \a_+$ ($a_+ \leq \b$). Indeed, the trace $u|_{\p B_1}$ is a first eigenfunction of the Laplace-Beltrami operator on each connected component of the domain $\p B_1 \cap \{u > 0\}$ with eigenvalue $n-1$, from a standard separation of variables argument. From Sperner's inequality \cite{S}, each component must have volume at least $|B_1|/2$, with equality if and only if it is a half-sphere. If a component $U$ of $\{u > 0\}$ is contained in half-sphere (so when $\b (y_n)_+ \geq u_0$, for example), this implies that $\{u > 0\}$ is that half-sphere and $u$ has the form $\a (y_n)_+$ on $U$ (the explicit first eigenfunction of the Laplace-Beltrami operator on it). If, on the other hand, a component $U$ of $\{u > 0\}$ contains a half-sphere (so when $\b y_n \leq u_0$ with $\b > 0$), one possibility is that there is also a second connected component $U'$ contained in the complementary half-sphere; then it follows that both are half-spheres and $u$ has the required form from our previous analysis. On the other hand, if $U = \{u > 0\}$ is the only connected component, it is easy to see from the domain monotonicity of the first eigenvalue and the fact that $U$ is open that $U$ is a half-sphere as well.
	
	By Proposition \ref{prop:Mlimits}, we must have that both $u_0$ and $\chi_0$ vanish on a set of positive Lebesgue measure, and that $\chi_0 = 1$ on a set of positive measure. In particular, this means at least one of $\a_\pm$ must be $0$, and we see that in both cases it must be $\a_-$ which is $0$.

	We can now directly compute
	\begin{align*}
		\int \left(|\n u|^2 \dvg \xi - 2 \n u \cdot D\xi \n u\right) &= \a_+^2 \int_{\{y_n > 0\}} \left(\dvg \xi - 2 \frac{\partial \xi_n}{\partial x_n}\right) \\&= - \a_+^2 \int_{\{y_n > 0\}} \frac{\partial \xi_n}{\partial x_n} = \a_+^2 \int_{\{y_n = 0\}} \xi_n\> d\mathcal{H}^{n-1}.
	\end{align*}
	This completely determines $\chi_0$, as, first, it implies that $\n \chi_0$ is supported on $\{y_n = 0\}$, and hence $\chi_0$ is constant on the two half-spaces. But then
	\[
		\a_+^2 \int_{\{y_n = 0\}} \xi_n\> d\mathcal{H}^{n-1} = - \int \chi_0 \dvg \xi =  \int_{\{y_n = 0\}} \left[\chi_0|_{\{y_n > 0\}} - \chi_0|_{\{y_n < 0\}}\right] \xi_n\> d\mathcal{H}^{n-1},
	\]
	from which we see that $\a_+^2 = \chi_0|_{\{y_n > 0\}} - \chi_0|_{\{y_n < 0\}}$. As $\chi_0 \in \{0, 1\}$ a.e., the only possibilities are
	\begin{align*}
		&1.\quad &\a_+ = 1, \quad & \chi_0|_{\{y_n > 0\}} = 1, \quad \chi_0|_{\{y_n < 0\}} = 0, \\
		&2.\quad &\a_+ = 0, \quad & \chi_0|_{\{y_n > 0\}} = 1, \quad \chi_0|_{\{y_n > 0\}} = 1, \\
		&3.\quad &\a_+ = 0, \quad & \chi_0|_{\{y_n > 0\}} = 0, \quad \chi_0|_{\{y_n > 0\}} = 0.
	\end{align*}	
	The second case is, in fact, impossible, as we know that $\chi_0 = 0$ on a set of positive measure, and likewise the third case is impossible as $\chi_0 = 1$ on a set of positive measure. We conclude that $a_+ = 1$, and so $\b \leq 1$ ($\b \geq 1$); this implies the conclusion.
\end{proof}

\begin{corollary}\label{cor:regular}
	Let $(u, \chi)$ be a variational solution on a connected open set $\W$, and assume $\Sigma^H = \varnothing$. Then the reduced boundary $\p^* \{u > 0\}$ is relatively open in $\p \{u > 0\}$ and is locally given by graphs of analytic functions along which $|\n u| = 1$ from the $\{u > 0\}$ side. The rest of the boundary $\p \{u > 0\} \sm \p^* \{u > 0\}$ has Hausdorff dimension at most $n - 3$. The Laplacian of $u$ satisfies 
	\[
		\Delta u = \cH^{n-1}\mres \p^*\{u > 0\}
	\]
	in the sense of distributions.
\end{corollary}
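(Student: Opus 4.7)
The plan is to combine what we have already established---that $\chi = \chi_{\{u > 0\}}$ a.e. (Lemma \ref{lem:regperimeterest}), that $\{u > 0\}$ has locally finite perimeter, and that $u$ is a viscosity solution of the Bernoulli problem---with the classical free-boundary regularity theory of Alt-Caffarelli, Caffarelli-Jerison-Kenig, and De Silva.

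First, I would identify blow-ups at reduced boundary points as half-plane solutions. At any $x \in \p^*\{u > 0\}$ the density of $\{\chi = 1\}$ is exactly $1/2$, so Proposition \ref{prop:Mlimits} gives $M_x(u; 0+) = |B_1|/2$. Running the blow-up analysis from the proof of Proposition \ref{prop:Mlimits}, every subsequential limit $u_0$ is $1$-homogeneous, harmonic on its positive set, and its positive set has exactly half the volume of $B_1$. Sperner's inequality \cite{S} (as in Lemma \ref{lem:density0char}) attains equality only when this set is a half-space, so $u_0(y) = \a(y \cdot \nu)_+$; then the variational identity \eqref{eq:variationalformula} passed to the limit and the identification of the limit of $\chi$, carried out exactly as in the preceding viscosity-solution lemma, forces $\a = 1$.

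Second, since $u$ is a viscosity solution with half-plane blow-up at each $x \in \p^*\{u > 0\}$, the free boundary is $\e$-flat at some scale around $x$. De Silva's flatness improvement theorem \cite{D} then yields a $C^{1,\a}$ graph representation of $\p \{u > 0\}$ in a neighborhood of $x$, along which $|\n u| = 1$ classically from the positive side; analyticity follows via the hodograph transform of Kinderlehrer-Nirenberg-Spruck \cite{KNS}. In particular $\p^*\{u > 0\}$ is relatively open in $\p \{u > 0\}$. For the dimension bound, the remaining points $x \in \p \{u > 0\} \sm \p^*\{u > 0\}$ have $M_x(u; 0+) \in (|B_1|/2, |B_1|)$, so any blow-up is a $1$-homogeneous viscosity solution whose positive set is neither a half-space nor all of $\R^n$. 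A standard Federer dimension-reduction argument on this stratum, using the monotonicity of $M_x$ to preserve the Bernoulli-cone structure under further blow-up together with the nonexistence of singular minimizing Bernoulli cones in low dimensions \cite{CJK, JS}, delivers $\dim_\cH(\p \{u > 0\} \sm \p^*\{u > 0\}) \leq n - 3$.

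Finally, for the distributional identity I would fix $\varphi \in C_c^\infty(\W)$, approximate $\{u > 0\}$ from inside by $\{u > \e\}$, and integrate by parts twice using that $u \in C^2(\{u > 0\})$ is harmonic there. The $C^{1,\a}$ regularity of $\p^*\{u > 0\}$, the $\cH^{n-1}$-negligibility of the singular set (from the dimension bound $\leq n-3$), and the boundary conditions $u = 0$, $|\n u| = 1$ on $\p^*\{u > 0\}$ justify the classical divergence theorem in the limit, giving
\[
\int u \, \Delta \varphi \, dx = \int_{\p^*\{u > 0\}} \varphi \, d\cH^{n-1}.
\]
The main obstacle is lining up the off-the-shelf flatness and dimension-reduction theorems with our framework: verifying that the viscosity-solution property supplied by the preceding lemma meets the exact hypotheses of De Silva \cite{D}, and that the blow-up classification at non-reduced boundary points---for which the classical theory was developed for minimizers---transfers to the variational-solution setting via the half-plane rigidity and the monotonicity of $M_x$ proved earlier.
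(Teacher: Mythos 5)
Your overall route matches the paper's---blow-up classification plus De Silva's regularity and a Federer dimension reduction---and the material you spell out that the paper leaves implicit (Sperner rigidity at reduced boundary points, relative openness and analyticity via \cite{D, KNS}, the integration-by-parts derivation of $\Delta u = \cH^{n-1}\mres \p^*\{u>0\}$) is sound, provided you also invoke the Ahlfors-regularity bound from Lemma \ref{lem:BV} to see that $\Delta u$ gives zero mass to the singular stratum of Hausdorff dimension $\leq n-3$.

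The genuine gap is in the dimension bound. You appeal to the nonexistence of singular minimizing Bernoulli cones in low dimensions from \cite{CJK, JS}, but those arguments depend essentially on energy minimality, which the tangent cones you produce do not possess: the dimension-reduced objects are merely one-homogeneous viscosity/variational solutions (and \cite{JS} is in any case a construction of a singular cone, not a nonexistence result). Fortunately the $n-3$ bound does not need minimizer theory at all. As in the paper's proof, the Federer/Weiss \cite{W99} dimension reduction, carried out using Proposition \ref{prop:monotone} and the closure of the class of viscosity and variational solutions with $\Sigma^H = \varnothing$ and $M_x$-density strictly below $|B_1|$, lands you in a dimension $k<3$; there any one-homogeneous candidate has the form $\a (x_n)_+ + \b (x_n)_-$ after rotation. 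The hypothesis $\Sigma^H = \varnothing$ combined with the upper semicontinuity of $x\mapsto M_x(u;0+)$ (Proposition \ref{prop:Mlimits}) gives a density strictly below $|B_1|$ along the blow-up, which forces $\a\b = 0$, i.e., a half-plane solution with empty singular set---the contradiction you need. Replace the \cite{CJK, JS} step with this elementary two-dimensional rigidity and your argument is complete.
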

\begin{proof}
First note that by \cite[Theorem 1.1]{D},
$\bar \epsilon$-flatness the free boundary of a viscosity
solution $v$ in $B_1$ in the sense
\[(x_n-\bar \epsilon)^+ \leq v(x) \leq (x_n+\bar \epsilon)^+\]
implies the free boundary to be $C^{1,\alpha}$ in $B_{1/2}$.
Suppose now that $\mathcal{H}^{s}((\p \{u > 0\} \sm \p^* \{u > 0\})\cap D)>0$
for some $s>n-3$.
Next we observe that by the assumption $\Sigma^H = \varnothing$
as well as Lemma \ref{lem:density0char},
a limit of the blow-up family $u(x+r\cdot)/r$ as a sequence $\to 0$
at free boundary points $x$ cannot be $0$.
Then by the dimension reduction procedure in the proof of \cite[Theorem 4.5]{W99}---replacing minimizers by viscosity and variational solutions,
the monotonicity formula by Proposition \ref{prop:monotone},
the flatness class $F$ by the flatness in the above sense,
and using this class of solutions is closed---we obtain a $1$-homogeneous viscosity solution $\bar u$
in dimension $k<3$ such that
$\mathcal{H}^{s-(n-3)}((\p \{\bar u > 0\} \sm \p^* \{\bar u > 0\})\cap D)>0$.
After rotation $\bar u(x) = \a (x_n)_+ + \b (x_n)_-$.
Now, by the assumption $\Sigma^H = \varnothing$
and 
by the upper semicontinuity
Proposition \ref{prop:Mlimits},
\[\limsup_{r\to 0} |B_r(x)\cap \{\chi = 1\}|/|B_r| \leq \theta <1
\text{ for }  x\in \p \{u > 0\}\cap D,\]
where $D$ is the connected component of $\W$.
By lower semicontinuity of the characteristic function $\chi_{\{t>0\}}$ it follows
that either $\a=0$ or $\b=0$. But then 
 \cite[Theorem 1.1]{D} implies that $\p \{\bar u > 0\} \sm \p^* \{\bar u > 0\}
=\varnothing$, a contradiction.
\end{proof}
\subsection{Analysis of singular points}\label{sec:ansing}

The analysis of regular points did not require Theorem \ref{thm:main}. However, without that theorem we are faced with serious challenges in the presence of singular points: the set $\Sigma^H$, and hence the support of $\Delta u$, might be large and have rather arbitrary structure. Theorem \ref{thm:main} rules out the most problematic anomalies and allows a much finer characterization of $\Delta u$ and $\chi$.

Let
\[
	\a(n) = \frac{1}{\sqrt{\int_{\p B_1} |x_n|^2 \> d\cH^{n-1}}}
\]
be an explicit dimensional constant, which has the useful normalization property $$H_0(\a(n)|x_n|; r) \equiv 1.$$ It is the same constant as in Theorem \ref{thm:approx1Dlimits}.

\begin{lemma} \label{lem:singular1density}
	Let $(u, \chi)$ be a variational solution on $\W$. Then for $\cH^{n-1}$-a.e. $x \in \Sigma^H$,
	\begin{enumerate}
		\item the renormalized functions
		\[
			v_{x, r}(y) = \frac{u(x + r y)}{r \sqrt{H_x(u; r)}} \rightarrow \a(n) |y_n|
		\]
		strongly in $W^{1, 2}_{\mathrm{loc}}(\R^n)$.
		\item the Lebesgue density of $\{u > 0\}$ at $x$ exists and is $1$:
		\[
			\lim_{r \to 0}\frac{|B_r(x) \cap \{u > 0\}|}{|B_r|} = 1.
		\]
	\end{enumerate}
\end{lemma}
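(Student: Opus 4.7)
The plan is to establish both conclusions at any point $x \in \Sigma^H$ satisfying three generic conditions, each of which holds $\cH^{n-1}$-a.e.\ on $\Sigma^H$: (a) $N_x(u; 0+) = 1$, from Theorem~\ref{thm:main}~(3); (b) $\Sigma^H$ has an $(n-1)$-dimensional approximate tangent plane $V$ at $x$, from the countable $\cH^{n-1}$-rectifiability and locally finite $\cH^{n-1}$-measure supplied by Theorem~\ref{thm:main}~(2); and (c) $x$ is a point of density one, with respect to $\cH^{n-1}\mres \Sigma^H$, for the full-measure subset $E = \{y \in \Sigma^H : N_y(u; 0+) = 1\}$. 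Rotate so $V = \{y_n = 0\}$. For every sequence $r_k \to 0$, Lemma~\ref{lem:renormalizedlimits} applied to the one-homogeneous rescalings $(u_{x, r_k}, \chi_{x, r_k})$ produces, along a subsequence, a limit $v$ of $v_{x, r_k}$ in the weak-$W^{1,2}(B_1)$ and strong-$L^2(B_1)\cap L^2(\p B_1)$ sense; $v \geq 0$ is subharmonic, harmonic on $\{v > 0\}$, one-homogeneous with $\|v\|_{L^2(\p B_1)} = 1$, and has homogeneity degree $N_\infty = N_x(u; 0+) = 1$.

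\textbf{Main step (approximate one-dimensionality).} Conditions (b) and (c), combined with the characterization of approximate tangent measures for rectifiable sets, yield for each small $r$ points $y_1^r, \ldots, y_{n-1}^r \in E \cap B_r(x)$ such that $(y_i^r - x)/r$ form a $\tau_r$-approximate orthonormal basis of $\{y_n = 0\}$ with $\tau_r \to 0$. Since $N_{y_i^r}(u; 0+) = N_x(u; 0+) = 1$ and $y \mapsto N_y(u; s)$ is continuous at fixed $s > 0$ (the integrals defining $D_y$ and $H_y$ are continuous in $y$), the frequency drops $N_x(u; 10 r) - N_x(u; r)$ and $N_{y_i^r}(u; 10 r) - N_{y_i^r}(u; r)$ are uniformly $o_r(1)$. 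Integrating~\eqref{eq:nprime2} over $[r, 10 r]$ at $x$ and at each $y_i^r$ and using Lemmas~\ref{lem:hdoubling}, \ref{lem:changeofpoint} to absorb the factors $H_{y_i^r}(u; \cdot)$ into $H_x(u; r)$, recasts these drops as the two homogeneity hypotheses of Lemma~\ref{lem:twopointhom} for the renormalized $\tilde u = v_{x, r}$ with second basepoint $\tilde x_i = (y_i^r - x)/r$, with parameter $\d = o_r(1)$. Conclusion~\eqref{eq:twopointhomc2} gives $\int_{B_9 \sm B_2}|\n v_{x, r}(z) \cdot \tilde x_i|^2\, dz \to 0$; summing and using that the $\tilde x_i$ approximately span $\{y_n = 0\}$,
\[
\sum_{i=1}^{n-1} \int_{B_9 \sm B_2} |\p_i v_{x, r}|^2\, dz \to 0.
\]
The argument of Theorem~\ref{thm:approx1Dlimits}, which applies on any nondegenerate annulus, then identifies $v = \a(n)|y_n|$. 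Since this is the only possible subsequential limit, the full family $v_{x, r}$ converges to $\a(n)|y_n|$ as $r \to 0$.

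\textbf{Strong convergence and density.} To promote weak to strong $W^{1,2}_{\text{loc}}(\R^n)$ convergence I reproduce the integration-by-parts identity from the proof of Theorem~\ref{thm:approx1Dlimits}: for $\eta \in C_c^\infty$,
\[
\int |\n v_{x, r}|^2 \eta = - \int v_{x, r} \n v_{x, r} \cdot \n \eta,
\]
since $v_{x, r}$ vanishes on the support of the nonnegative measure $\Delta v_{x, r}$. The right-hand side converges by weak $W^{1,2}$ plus strong $L^2$, and the analogous identity for $v$ forces $\|\n v_{x, r}\|_{L^2}^2 \to \|\n v\|_{L^2}^2$ on open precompact subsets of $B_1$; running this at every scale via the rescalings $v_{x, Rr}$ (using $N_x(u; s) + V_x(u; s) \to 1$ to obtain $H_x(u; Rr)/H_x(u; r) \to 1$ and thereby match the scalings) gives strong convergence on all compact subsets of $\R^n$, proving (1). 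For (2), strong $L^2(B_1)$ convergence together with $\a(n)|y_n| > 0$ a.e.\ and a Chebyshev argument gives $|\{v_{x, r} > 0\} \cap B_1| \to |B_1|$, which after rescaling is exactly $|\{u > 0\} \cap B_r(x)|/|B_r| \to 1$.

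The chief obstacle is the main step: bridging the rectifiable structure of $\Sigma^H$ at $x$ (a purely geometric measure-theoretic property) with the two-point homogeneity estimate of Lemma~\ref{lem:twopointhom} (which requires uniform frequency-drop hypotheses on a spanning set of basepoints). Property (c) of the good set $E$ is what supplies the a priori selection of $(y_i^r)$ spanning $V$, and continuity of $N_\cdot(u; s)$ in the center at fixed $s$ is what delivers uniform control of the frequency drops at $x$ and at all the $y_i^r$ simultaneously.
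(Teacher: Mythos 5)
Your proposal follows the same route as the paper's proof: generic conditions holding $\cH^{n-1}$-a.e.\ (frequency $1$ and approximate tangent), spanning points selected from the tangent, Lemma~\ref{lem:twopointhom} applied to show the rescalings are approximately one-dimensional, Theorem~\ref{thm:approx1Dlimits} to identify the limit, uniqueness of the subsequential limit, and the integration-by-parts identity (using that $v$ and $v_{x,r}$ are harmonic when positive) to upgrade to strong $W^{1,2}_{\mathrm{loc}}$ convergence.

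One remark on a redundancy: your condition (c), that $x$ is a density-one point of $E = \{N_y(u;0+)=1\}$ in $\Sigma^H$, is not needed and is not used in the paper. The selection of spanning points from the approximate tangent and the uniform smallness of the frequency drops at those points do not require the points to lie in $E$; they follow already from the approximate-tangent property of $\Sigma^H$ together with the observation that, for any $\eta > 0$, choosing $r_0$ small so that $N_x(u;10r_0) < 1+\eta/2$ and then using continuity of $y\mapsto N_y(u;10r_0)$ and monotonicity of $N_y$ in $r$ yields $1 \le N_y(u;r) \le N_y(u;10r_0) < 1+\eta$ for all $y\in\Sigma^H$ near $x$ and all $r < r_0$. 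Indeed, if one were really relying on $N_{y_i^r}(u;0+)=1$ alone, one would face the uniformity issue that the $y_i^r$ move as $r\to 0$; what actually delivers uniformity is the continuity-plus-monotonicity argument you also invoke, so your proof is correct but carries an extra hypothesis along for the ride. A second, purely stylistic difference: the paper deduces (2) early, directly from weak limits via Fatou and lower semicontinuity of $\chi_{\{\cdot>0\}}$, rather than at the end via a Chebyshev argument; both work.
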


Note that the density of $\{\chi = 1\}$ must be $1$ at every point in $\Sigma^H$ from the definition of $\Sigma^H$ and Proposition \ref{prop:Mlimits}. The second part of this lemma is making a stronger and nontrivial assertion, and will be used to rule out $\Sigma^H$ being located along the boundary of a ``pathological'' component of $\{u = 0\}$ where $\chi = 1$; this will be made rigorous below.

\begin{proof}
	From Theorem \ref{thm:main}, at $\cH^{n-1}$-a.e. $x \in \Sigma^H$, we have that $N_x(u; 0+) = 1$ and $\Sigma^H$ admits an approximate tangent, i.e. the measures
	\[
		\nu_{x, r}(E) = \frac{\cH^{n-1}(E\cap \Sigma^H \cap B_r(x))}{r^{n-1}}
	\]
	converge in the weak-$*$ sense to $\cH^{n-1}\mres L$ for some hyperplane $L \ss \R^n$ (passing through the origin). We show that at any such $x$, both (1) and (2) hold. Without loss of generality translate so $x = 0$. By the continuity of $y \mapsto N_y(u; r)$ for each small and positive $r$, we may assume that for each positive $\eta$, that there is a $\d$ such that $N_y(u; r) \leq 1 + \eta$ for $y \in B_\d \cap \Sigma^H$ and $r < \d$.

	Given any sequence $r_k \to 0$, consider the renormalized sequence
	\[
		v_k(y) = v_{0, r_k}(y) = \frac{u(r_k y)}{r_k \sqrt{H_0(u; r_k)}}.
	\]
	From Lemma \ref{lem:renormalizedlimits}, $v_k \rightharpoonup v \in W^{1, 2}(B_R)$ weakly, strongly in $L^2(B_R)$, and a.e. as $k\to\infty$ for each $R>0$ (passing if necessary to a subsequence). It follows that $\chi_{\{v > 0\}} \leq \liminf_{k \rightarrow \infty}\chi_{\{v_k > 0\}}$ a.e., and so by Fatou's lemma
	\[
		\frac{|\{v > 0\} \cap B_1|}{|B_1|} \leq \liminf_{k \rightarrow \infty} \frac{|\{v_k > 0\} \cap B_1|}{|B_1|} = \liminf_{k \rightarrow \infty} \frac{|\{u > 0\} \cap B_{r_k}|}{|B_{r_k}|}.
	\]
	As $|y_n| > 0$ Lebesgue a.e., (2) will follow directly from (1).

	Due to the existence of an approximate tangent for $\nu$, for any $B_\t(y)$ with $y \in L$ we have that
	\[
		\liminf_{k \rightarrow \infty} \nu_{0, r_k}(B_\t(y)) \geq \cH^{n-1}(B_\t(y) \cap L) \geq c(n) \t^{n-1}.
	\]
	In particular, for $k$ large $B_{\t r_k}(r_k y) \cap \Sigma^H$ is nonempty. Select coordinates so that $L = \{y_n = 0\}$, and use $y = e_i/10$, $i \leq n - 1$ to obtain points $y_{i, k} \in \Sigma^H \cap B_{1/40 r_k}(r_k e_i)$. Using Lemma \ref{lem:hdoubling} and \ref{lem:changeofpoint}, we know that $\frac{1}{C}H_{0}(u; r_k) \leq  H_{y_{i, k}}(u; R r_k) \leq C H_{0}(u; r_k)$ for every $R \in [1/40, 40]$ and for some $C = C(n)$ independent of $r_k$ and $y_{i, k}$. Then from \eqref{eq:nprime2},
	\begin{align*}
		\int_{r_k/4}^{4 r_k} \int_{\p B_s(y_{i, k})}& \frac{1}{H_{0}(u; r_k) r_k^{n + 2}} |\n u(y) \cdot y - N_{y_{i, k}}(u; s r_k) u(y)|^2 \> d\cH^{n-1} ds \\
		&\leq C [N_{y_{i, k}}(u; 4 r_k) - N_{y_{i, k}}(u;  r_k/4) ] \\
		&\leq C \eta,
	\end{align*}
	using $N_{y_{i, k}} \in [1, 1 + \eta]$ for $k$ large. Using $N_{y_{i, k}} \in [1, 1 + \eta]$
once more,
	\[
		\int_{B_{4}(y_{i, k}/r_k) \sm B_{1/4}(y_{i, k}/r_k)} |\n v_k(y) \cdot y - v_k(y)|^2 \> dy \leq C \eta.
	\]
	The same estimate is true at $0$:
	\[
		\int_{B_{4} \sm B_{1/4}} |\n v_k(y) \cdot y - v_k(y)|^2 \> dy \leq C \eta.
	\]
	Applying Lemma \ref{lem:twopointhom} for each $i$, we obtain that
	\[
		\int_{B_{2} \sm B_{1/2}} |\n v_k(y) \cdot \frac{y_{i, k}}{r_k}|^2 \leq C \eta,
	\]
	which implies that
	\[
		\int_{B_{2} \sm B_{\frac{1}{2}}} |\partial_i v_k|^2 \leq C \eta
	\]
	due to how $y_{i, k}$ were chosen. As $\eta$ was arbitrary,
	\[
		\lim_{k \rightarrow \infty} \int_{B_{2} \sm B_{\frac{1}{2}}} |\partial_i v_k|^2 = 0.
	\]

	We can therefore apply Theorem \ref{thm:approx1Dlimits} to conclude that $v(y) = \a(n)|y_n|$ on $B_1\sm \overline{B_{1/2}}$. A rescaled version of the same argument shows that in fact $v(y) = \a(n) |y_n|$ on any annulus $B_R \sm \overline{B_{R/2}}$, and so on all of $\R^n$. By the same argument as in the proof of Theorem \ref{thm:approx1Dlimits}, this implies that $v_k \rightarrow v$ \emph{strongly} in $W^{1, 2}(B_R)$: for any $\eta \in C_c^\infty(\R^n)$,
	\[
		\int |\n v|^2 \eta = - 2 \int v \n \eta \cdot \n v = \lim_{k \rightarrow \infty} - 2 \int v_k \n \eta \cdot \n v_k = \lim_{k \rightarrow \infty} \int \eta |\n v_k|^2,
	\]
	where the first and last terms used that $v, v_k$ are harmonic when positive.

	As the limit $\a(n) |y_n|$ is independent of subsequence, we conclude that (1) holds.
\end{proof}
\subsection{Identification of the limit \texorpdfstring{$\chi$}{chi}}\label{sec:identchi}
\begin{theorem}\label{thm:chi}
	Let $(u, \chi)$ be a variational solution on $\W$, where $\W$ is open and connected. Then either $u \equiv 0$ or $\chi = \chi_{\{u > 0\}}$ almost everywhere.
\end{theorem}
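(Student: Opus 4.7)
The inclusion $\chi \geq \chi_{\{u > 0\}}$ is built into the definition of a variational solution, so it suffices to show that, unless $u \equiv 0$, the set $A := \{u = 0\} \cap \{\chi = 1\}$ has Lebesgue measure zero. I would argue by contradiction: assume $u \not\equiv 0$ and $|A| > 0$, and derive a contradiction with Lemma \ref{lem:singular1density}(2), which says that at $\cH^{n-1}$-a.e.\ point of $\Sigma^H$ the set $\{u > 0\}$ has Lebesgue density one.

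The first step is to upgrade $A$ to a well-behaved set. As noted at the start of Section \ref{sec:anreg}, the variational identity forces $\chi$ to be locally constant on $\{u=0\}^\circ$. Hence, modulo subsets of the topological free boundary, $A$ agrees with the open set $\tilde A$ formed by those components of $\{u=0\}^\circ$ on which $\chi \equiv 1$. From Lemma \ref{lem:regperimeterest} and the $\cH^{n-1}$-finiteness of $\Sigma^H$ provided by Theorem \ref{thm:main}, the topological free boundary has locally finite $\cH^{n-1}$-measure, so in particular $|\partial \{u > 0\}| = 0$ and $|A \triangle \tilde A| = 0$. Moreover, $\chi_A = \chi - \chi_{\{u>0\}}$ is the difference of two BV functions (Lemma \ref{lem:BV} for $\chi$, the measure estimate just mentioned for $\chi_{\{u>0\}}$), so $\tilde A$ has locally finite perimeter. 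Because $u \not\equiv 0$, the open set $\{u > 0\}$ has positive measure, so $|\tilde A^c \cap \W| > 0$; combined with $|\tilde A| > 0$ and the relative isoperimetric inequality, there is a ball $B \Subset \W$ with $\cH^{n-1}(\partial^* \tilde A \cap B) > 0$.

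The second step, which is the real heart of the argument, is to locate $\partial^* \tilde A$ inside $\Sigma^H$. Pick any $x \in \partial \tilde A \cap \W$. Since $\tilde A$ is open and $\chi \equiv 1$ on it, for every $y \in \tilde A$ small balls around $y$ satisfy $|\{\chi = 1\} \cap B_r(y)| = |B_r|$, so $M_y(u; 0+) = |B_1|$ by Proposition \ref{prop:Mlimits}. Choosing $y_k \in \tilde A$ with $y_k \to x$ and using the upper semicontinuity of $M_{\cdot}(u; 0+)$ (also in Proposition \ref{prop:Mlimits}) yields $M_x(u; 0+) = |B_1|$. It remains to check $x \in \overline{\{u > 0\}}$, so that $x \in \Sigma^H$: otherwise a small $B_r(x) \subset \{u=0\}^\circ$ lies in one component of $\{u=0\}^\circ$; this component meets $\tilde A$, so it is part of $\tilde A$, which would force $x \in \tilde A$, contradicting $x \in \partial \tilde A$. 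Hence $\partial^* \tilde A \cap \W \subseteq \Sigma^H$.

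To conclude, at $\cH^{n-1}$-a.e.\ $x \in \partial^* \tilde A \cap B$ the set $\tilde A$ has Lebesgue density $\tfrac{1}{2}$, so $\{u > 0\}$ has density at most $\tfrac{1}{2}$ at $x$; but since $x \in \Sigma^H$, Lemma \ref{lem:singular1density}(2) says the density of $\{u > 0\}$ at $x$ is $1$. The set where both occur has positive $\cH^{n-1}$-measure, which is the desired contradiction. The main obstacle is essentially technical bookkeeping, specifically the identification of $A$ with an open $\tilde A$ up to a null set and the verification that $\partial^* \tilde A$ actually lands in $\Sigma^H$; once that is in place, Lemma \ref{lem:singular1density}(2) does the essential work.
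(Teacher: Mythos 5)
Your proof is correct and follows essentially the same strategy as the paper's: locate the reduced boundary of the ``bad'' set (where $\chi = 1$ but $u = 0$) inside $\Sigma^H$ via upper semicontinuity of $M_\cdot(u;0+)$, and then contradict Lemma \ref{lem:singular1density}(2), which gives Lebesgue density one for $\{u>0\}$ at $\cH^{n-1}$-a.e.\ point of $\Sigma^H$, against the density-$\frac{1}{2}$ property of the reduced boundary. The differences from the paper are cosmetic: you work with the union $\tilde A$ of all bad components instead of a single component $U$, you obtain finite perimeter via the BV difference $\chi - \chi_{\{u>0\}}$ rather than via Federer's criterion applied to $\partial U \subset \Sigma^H$, and you make explicit the (correct, if slightly pedantic) check that boundary points of $\tilde A$ lie in $\overline{\{u>0\}}$.
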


\begin{proof}
	From Lemma \ref{lem:regperimeterest} and \ref{thm:main}, we have that $|\partial \{u > 0\}| = 0$, while by definition on $\{u > 0\}$ we have $\chi(x) = 1 = \chi_{\{u > 0\}}(x)$ a.e.; therefore it suffices to show that $\chi = 0$ a.e. on the interior $\{u = 0\}^\circ$. 
	
	Take a nonempty connected component $U$ of $\{u = 0\}^\circ$ on which $\chi = 1$ (recalling that $\chi$ is locally constant on $\{u = 0\}^\circ$). Clearly $\p U \cap \W \ss \p \{u > 0\}$, and for $x \in \p U$ we must have $M_x(0+) \geq \limsup_{y \rightarrow x} M_y(0+) = 1$, using Proposition \ref{prop:Mlimits} and the upper semicontinuity of $x \mapsto M_x(0+)$ (see Proposition \ref{prop:Mlimits}). In particular, $\p U \cap \W \ss \Sigma^H$. Applying Theorem \ref{thm:main}, $\p U \cap \W$ has locally finite Hausdorff measure, and so $U$ has locally finite perimeter.

	Assume that $U \neq \W$: then there is an smooth connected open set $\W' \cc \W$ such that $|U \cap \W'| > 0$, $|\W' \sm U| > 0$. By the relative isoperimetric inequality, then, $\cH^{n-1}(\p^* U \cap \W') > 0$, where $\p^* U$ is the reduced boundary. At each point $x \in \p^* U$, $U$ has Lebesgue density $\frac{1}{2}$, so $\{u > 0\} \ss \W \sm U$ can have Lebesgue density at most $\frac{1}{2}$. On the other hand, by Lemma \ref{lem:singular1density} we have that the Lebesgue density of $\{u > 0\}$ at $\cH^{n-1}$-a.e. point in $\Sigma^H$, which contains $\p^*U \ss \p U$, is $1$. This is a contradiction.
\end{proof}

For $x \in \Sigma^H$, we have from Lemma \ref{lem:hdoubling} that $H_x(u; r)$ is nondecreasing, so the limit $H_x(u; 0+) \in [0, \infty)$ exists. It is clear that in fact $H_x(u; 0+) \leq C_V^2 |\p B_1|$, from the assumed Lipschitz bound on $u$.
\subsection{Identification of the \texorpdfstring{measure $\Delta u$}{Laplacian measure}}\label{sec:distributional}
\begin{theorem}\label{thm:distributional}
	Let $(u, \chi)$ be a variational solution on $\W$. Then $\p \{u > 0\}$ is countably $\cH^{n-1}$-rectifiable, has locally finite $\cH^{n-1}$-measure, and
	\[
		\Delta u = \cH^{n-1}\mres \p^* \{ u > 0\} + 2 \a(n) \sqrt{H_x(u; 0+)} \cH^{n-1}\mres \Sigma^H.
	\]
\end{theorem}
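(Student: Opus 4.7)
The plan is to decompose $\p\{u>0\} = \Sigma^H \cup \bigl(\p\{u>0\}\setminus\Sigma^H\bigr)$, treat the two pieces separately, and identify $\Delta u$ by computing the Radon--Nikodym density of $\Delta u$ with respect to $\cH^{n-1}\mres \p\{u>0\}$.

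For rectifiability and local finite $\cH^{n-1}$-measure of $\p\{u>0\}$: the set $\Sigma^H$ is relatively closed in $\W$ by the upper semicontinuity from Proposition~\ref{prop:Mlimits}, and is countably $\cH^{n-1}$-rectifiable with locally finite $\cH^{n-1}$-measure by Theorem~\ref{thm:main}. The restriction of $(u,\chi)$ to the open set $\W' := \W \setminus \Sigma^H$ is again a variational solution, now with empty set of highest density, so Lemma~\ref{lem:regperimeterest} (combined with Theorem~\ref{thm:chi}, which forces $\chi = \chi_{\{u>0\}}$) yields the bound $\cH^{n-1}(\p\{u>0\}\cap B_r(x))\leq C r^{n-1}$ on compactly contained balls in $\W'$, while Corollary~\ref{cor:regular} shows that $\p^*\{u>0\}\cap\W'$ is a smooth hypersurface with residual singular set of dimension at most $n-3$.

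For the identification of $\Delta u$: the cutoff computation from the proof of Lemma~\ref{lem:BV} gives $\Delta u(B_r(x))\leq C r^{n-1}$ whenever $B_{2r}(x)\cc\W$, and a standard Vitali cover argument upgrades this to $\Delta u \ll \cH^{n-1}$. Since $\supp \Delta u \ss \p\{u>0\}$ and $\cH^{n-1}\mres \p\{u>0\}$ is a locally finite Radon measure, Besicovitch differentiation yields a nonnegative Borel density $\rho$ with $\Delta u = \rho\cdot\cH^{n-1}\mres \p\{u>0\}$ and $\rho(x) = \lim_{r\to 0} \Delta u(B_r(x))/\cH^{n-1}(\p\{u>0\}\cap B_r(x))$ at $\cH^{n-1}$-a.e.\ $x \in \p\{u>0\}$. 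On $\p^*\{u>0\}\cap\W'$ we read off $\rho=1$ at $\cH^{n-1}$-a.e.\ point from Corollary~\ref{cor:regular}, and Lemma~\ref{lem:singular1density}(2) forces $\cH^{n-1}(\p^*\{u>0\}\cap\Sigma^H)=0$, so the reduced and high-density parts decouple cleanly. On $\Sigma^H$, fix a point $x$ satisfying the conclusions of both Theorem~\ref{thm:main} and Lemma~\ref{lem:singular1density}; the rescalings $v_{x,r}\to \a(n)|y_n|$ strongly in $W^{1,2}_{\mathrm{loc}}(\R^n)$, so their Laplacians converge to $2\a(n)\cH^{n-1}\mres\{y_n=0\}$ as distributions and (since all $v_{x,r}$ are subharmonic and the limit does not charge $\p B_1$) as Radon measures on $B_1$. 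Unraveling the scaling, $\Delta v_{x,r}(B_1) = r^{1-n} H_x(u;r)^{-1/2}\Delta u(B_r(x))$, and combining with $H_x(u;r)\to H_x(u;0+)$ and the rectifiable-density identity $\cH^{n-1}(\Sigma^H\cap B_r(x))/(\omega_{n-1}r^{n-1})\to 1$ (valid at $\cH^{n-1}$-a.e.\ $x \in \Sigma^H$, with $\omega_{n-1}=\cH^{n-1}(B_1\cap\{y_n=0\})$) gives $\rho(x) = 2\a(n)\sqrt{H_x(u;0+)}$, as required.

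The main obstacle is the density computation on $\Sigma^H$: we must pass from the strong $W^{1,2}_{\mathrm{loc}}$-convergence of $v_{x,r}$ provided by Lemma~\ref{lem:singular1density}, through weak-$\ast$ convergence of the associated Radon measures $\Delta v_{x,r}$, to a genuine pointwise Besicovitch density at $x$. This works cleanly precisely because the support $\{y_n=0\}$ of the limiting $\Delta(\a(n)|y_n|)$ coincides (in a density sense) with the approximate tangent of $\Sigma^H$ at $x$, and because the normalizing constant $\a(n)$ is chosen exactly so that $H_0(\a(n)|y_n|;r)\equiv 1$, making the $\omega_{n-1}$-factors in numerator and denominator cancel.
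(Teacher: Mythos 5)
Your proof is correct and follows essentially the same route as the paper's: same decomposition of $\p\{u>0\}$ into $\Sigma^H$ and its complement, same invocation of Theorem~\ref{thm:main} and Corollary~\ref{cor:regular} for the two halves, same density estimate from Lemma~\ref{lem:BV}, and the same blow-up computation via Lemma~\ref{lem:singular1density} and the scaling identity $\Delta v_{x,r}(B_1) = r^{1-n}H_x(u;r)^{-1/2}\Delta u(B_r(x))$. The only cosmetic difference is that you extract the density via Besicovitch differentiation against $\cH^{n-1}\mres\p\{u>0\}$ followed by the rectifiable density theorem, whereas the paper works directly with the $\omega_{n-1}r^{n-1}$-normalized density $\theta(x)=\limsup_{r\to 0}\Delta u(B_r(x))/(\omega_{n-1}r^{n-1})$ and cites Federer \cite{F}[3.2.19] together with Radon--Nikodym; these are interchangeable once rectifiability and local finiteness are in hand.
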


\begin{proof}
	The measure $\Delta u$ is supported on $\p \{u > 0\}$, and applying Corollary \ref{cor:regular} to $u$ on the open set $\W \sm \Sigma^H$ gives
	\[
		\Delta u \mres (\W \sm \Sigma^H) = \cH^{n-1}\mres \p^* \{ u > 0\}.
	\]
	We also have the estimate from Lemma \ref{lem:BV}
	\[
		\Delta u(B_r(x)) \leq C(C_V) r^{n-1}
	\]
	for any $B_{2r}(x) \cc \W$, which implies that $ \cH^{n-1}((\p \{ u > 0\} \sm \Sigma^H) \cap \W') = \cH^{n-1}(\p^* \{ u > 0\} \cap \W') < \infty$ for any $\W' \cc \W$. On the other hand, by Theorem \ref{thm:main}, $\cH^{n-1}(\Sigma^H \cap \W') < \infty$, so $\cH^{n-1}(\W' \cap \p \{u > 0\}) < \infty$. Also we have that $\Sigma^H$, and so all of $\p \{u > 0\}$, is countably $\cH^{n-1}$-rectifiable.

	By \cite{F}[3.2.19] and the Radon-Nikodym theorem, if we take the density
	\[
		\theta(x) = \limsup_{r \to 0} \frac{\Delta u(B_r(x))}{\w_{n-1}r^{n-1}} \leq C(C_v)
	\]
	where $\w_{n-1}$ is the volume of the unit ball in $\R^{n-1}$, then
	\[
		\Delta u = \theta \cH^{n-1}\mres \p\{ u > 0 \}.
	\]
	Therefore it suffices to show that for $\cH^{n-1}$-a.e. point $x \in \Sigma^H$, we have $$\theta(x) = 2 \a(n) \sqrt{H_x(u; 0+)}.$$

	Applying Lemma \ref{lem:singular1density}, for $\cH^{n-1}$-a.e. point $x \in \Sigma^H$ we have that $v_{x, r}(y) = \frac{u(x + r y)}{r \sqrt{H_x(u; r)}} \rightarrow v(y) = \a(n) |y_n|$ in $W^{1, 2}_{\mathrm{loc}}(\R^n)$, for some choice of coordinates on $\R^n$. Then
	\[
		\frac{\Delta u(B_r(x))}{r^{n-1} \sqrt{H_x(u; r)}} = \Delta v_{x, r}(B_1) \rightarrow \Delta v(B_1) = 2 \a(n) \cH^{n-1}(B_1 \cap \{y_n = 0\}) = 2 \a(n) \w_{n-1}
	\]
as $r\to 0$,
	using that $\Delta v (\p B_1) = 0$ and the fact that $\Delta v_{x, r} \rightharpoonup \Delta v$ in the weak-$*$ sense as measures as $r\to 0$, and computing the Laplacian explicitly in the last step. Multiplying both sides by $\sqrt{H_x(u; 0+)} \geq 0$ leads to
	\[
		\theta(x) = \limsup_{r \to 0} \frac{\Delta u(B_r(x))}{\w_{n-1}r^{n-1}} = 2 \a(n) \sqrt{H_x(u; 0+)},
	\]
	completing the proof.
\end{proof}

\begin{remark}
	Along $\p^* \{u > 0\}$, an elementary computation shows that $\a(n)\sqrt{H_x(u; 0+)} = \frac{1}{2}$, so the conclusion of Theorem \ref{thm:distributional} could be rewritten as
	\[
		\Delta u = 2 \a(n) \sqrt{H_x(u; 0+)} \cH^{n-1}\mres \p \{u > 0\}.
	\]
\end{remark}

\begin{proof}[Proof of the Main Theorem \ref{thm:intro}:]
(i) follows from Theorem \ref{thm:chi}.\\
(ii) has been shown in Theorem \ref{thm:distributional}.\\
(iii) follows from a combination of Theorem \ref{thm:distributional}
and Lemma \ref{lem:singular1density}.\\
(iv) follows from Lemma \ref{lem:singular1density} and Theorem \ref{thm:main}.
\end{proof}

\section{Application to a compactness result for classical solutions}\label{sec:compactness}

In this section we present a simple example of how to use the theory developed to study limiting behavior of related problems.

\begin{definition}
	Given an open set $\W$, we say a Lipschitz continuous function $u : \W \rightarrow [0, \R)$ is a \emph{classical solution}  if $u$ is harmonic on $\{u > 0\}$, the set $\p\{u > 0\}$ is locally given as a graph of a smooth function, and ---provided that $\nu_x$ is a unit normal vector to $\p \{u > 0\}$ at $x$ such that $u(x + t \nu) > 0$ for $t \in (0, t_0)$--- $\lim_{t \to 0+}\frac{u(x + t \nu) - u(x)}{t} = 1$.
\end{definition}

Note that if $u$ is a classical solution, then $(u, \chi_{\{u > 0\}})$ is a variational solution; \eqref{eq:variationalformula} can be verified through integration by parts.

\begin{proposition}\label{prop:lip}
	Let $u$ be a classical solution on $B_1$, and assume that $u(0) = 0$. Then for $r \in (0,1)$,
	\[
		\sup_{B_r} |\n u| \leq 1 + \w(r)
	\]
	for some continuous nondecreasing function $\w : [0, 1)\to \R$ with $\w(0+) = 0$, depending only on $n$.
\end{proposition}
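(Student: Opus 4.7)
I would argue by contradiction via a blow-up and compactness argument making essential use of the main results of this paper on limits of classical solutions. Suppose no universal $\omega$ exists. Then there would exist $\eta > 0$, a sequence of classical solutions $u_k$ on $B_1$ with $u_k(0) = 0$, and $r_k \to 0^+$ such that $\sup_{B_{r_k}} |\nabla u_k| \ge 1 + \eta$. Since each $u_k$ is $C^1$ up to its smooth free boundary at $0$ from the $\{u_k>0\}$ side with $|\nabla u_k(0^+)| = 1$, the map $r \mapsto \sup_{\overline{B_r} \cap \overline{\{u_k > 0\}}} |\nabla u_k|$ is continuous, nondecreasing, and tends to $1$ as $r \to 0^+$. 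By the intermediate value theorem I would pick the smallest $\rho_k \in (0, r_k]$ at which this supremum equals $1 + \eta/2$; necessarily $\rho_k \to 0$. Setting $v_k(x) = u_k(\rho_k x)/\rho_k$ gives classical solutions on $B_{1/\rho_k}$ with $v_k(0) = 0$ and $|\nabla v_k| \le 1 + \eta/2$ on $B_1$. Because $|\nabla v_k|^2$ is subharmonic on $\{v_k > 0\}$ with boundary value $1$ on $\partial\{v_k > 0\}$, the strict maximum $1 + \eta/2 > 1$ on $B_1$ is attained at some $y_k \in \partial B_1 \cap \{v_k > 0\}$.

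\textbf{Extracting a variational limit.} The Lipschitz bound on $B_1$ and $v_k(0) = 0$ give $\|v_k\|_{L^\infty(B_1)} \le 1 + \eta/2$; along a subsequence $v_k \to v$ uniformly on $\overline{B_1}$ and $y_k \to y_0 \in \overline{B_1}$, with $v \ge 0$, $v(0) = 0$, and $v$ Lipschitz with constant $1 + \eta/2$. Using this uniform Lipschitz bound, the variational identity \eqref{eq:variationalformula} passes to the limit exactly as in the proof of Theorem \ref{thm:compact_intro} (Section \ref{sec:compactness}), so $(v, \chi)$ is a variational solution on $B_1$ that arises as a limit of classical solutions. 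On the open set $\{v > 0\}$, $v$ is harmonic and each $v_k$ is eventually harmonic on compact subsets, so by standard interior regularity $\nabla v_k \to \nabla v$ locally uniformly on $\{v > 0\}$, and in particular $|\nabla v| \le 1 + \eta/2$ throughout $\{v > 0\}$.

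\textbf{Classification and contradiction.} I would next show $|\nabla v| \le 1$ on $\{v > 0\}$. By the Bernoulli condition $|\nabla v| = 1$ on $\partial^* \{v > 0\}$ (following from the variational identity), by Theorem \ref{thm:compact_intro} which gives $\alpha(n) \sqrt{H_x(v; 0+)} \le 1$ at $\cH^{n-1}$-a.e.~$x \in \Sigma_{**}(v)$, and by Theorem \ref{thm:intro}(iv) identifying the blow-up of $v$ at such $x$ as $\alpha(n) \sqrt{H_x(v; 0+)} |(y - x) \cdot \nu(x)|$, I would conclude that $\limsup_{z \to x,\, v(z) > 0} |\nabla v(z)| \le 1$ at $\cH^{n-1}$-a.e.~$x \in \partial\{v > 0\}$. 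Combining this with the $\cH^{n-1}$-rectifiability and local finiteness of $\partial\{v > 0\}$ from Theorem \ref{thm:intro}(ii), the subharmonic maximum principle applied to $|\nabla v|^2$ on $\{v > 0\} \cap B_\rho$ (with the $\cH^{n-1}$-rectifiable exceptional portion of the boundary treated as a removable/polar set for subharmonic functions) yields $|\nabla v| \le 1$ on $\{v > 0\} \cap B_\rho$ for $\rho$ slightly less than $1$. If $y_0 \in \{v > 0\}$, the contradiction is then immediate: $1 + \eta/2 = \lim_k |\nabla v_k(y_k)| = |\nabla v(y_0)| \le 1$. If instead $y_0 \in \partial\{v > 0\}$, the Hausdorff convergence $\partial\{v_k > 0\} \to \partial\{v > 0\}$ from Theorem \ref{thm:compact_intro} places $y_k$ close to free boundary points of $v_k$ whose local geometry is close to a hyperplane; applying De Silva's flatness/regularity result \cite{D} at these scales, the rescaled free boundaries become $C^{1,\alpha}$ with small norm, forcing $|\nabla v_k(y_k)| \to 1$ and contradicting $|\nabla v_k(y_k)| = 1 + \eta/2$.

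\textbf{Main obstacle.} The principal technical difficulty is the bootstrap step $|\nabla v| \le 1$ on $\{v > 0\}$: the subharmonic maximum principle does not apply naively, since $\nabla v$ need not be continuous at singular points in $\Sigma^H$, and the $\cH^{n-1}$-a.e. boundary bound coming from the blow-up classification must be upgraded to a pointwise (or capacity-theoretic) control on the boundary values. This is precisely where the full force of Theorem \ref{thm:intro}(ii)-(iv) is needed: rectifiability, locally finite $\cH^{n-1}$-measure, and the explicit one-dimensional blow-up at $\cH^{n-1}$-a.e. point of $\Sigma^H$ together ensure that the singular set is negligible enough for a removable-singularities argument to close. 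Handling the boundary case $y_0 \in \partial\{v > 0\}$ requires, in addition, a quantitative flatness estimate that is consistent with the hypotheses of \cite{D}.
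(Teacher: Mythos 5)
Your argument is circular. The key inequality you invoke from Theorem \ref{thm:compact_intro}, namely $\a(n)\sqrt{H_x(v;0+)}\le 1$ at $\cH^{n-1}$-a.e.\ $x\in\Sigma_{**}(v)$, is established in the proof of Theorem \ref{classcomp} precisely \emph{by applying Proposition \ref{prop:lip}} to the approximating classical solutions and passing to the limit. You cannot use that conclusion to prove the proposition it depends on. The same circularity affects your use of the Hausdorff convergence of free boundaries, which is also part of Theorem \ref{classcomp}.

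Even setting circularity aside, the ``removable set'' step does not close. You establish $\limsup_{z\to x}|\n v(z)|\le 1$ only at $\cH^{n-1}$-a.e.\ boundary point, and the exceptional set could a priori have positive $(n-2)$-dimensional Hausdorff measure: a set of $\cH^{n-1}$-measure zero is in general \emph{not} polar, so the Phragm\'en--Lindel\"of/maximum-principle conclusion $|\n v|\le 1$ on $\{v>0\}$ does not follow. Polar sets for the Laplacian have vanishing $(n-2)$-capacity, which is strictly stronger than vanishing $\cH^{n-1}$-measure. The final case $y_0\in\p\{v>0\}$ is also under-specified: $y_0$ may lie in $\Sigma^H$, where no flatness is available, so \cite{D} cannot be invoked without further argument.

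The paper's own proof is simply a citation of \cite[Theorem 6.3]{AC}, where the bound $\sup_{B_r}|\n u|\le 1+\omega(r)$ for classical solutions is proved directly by a barrier/comparison argument that predates and is entirely independent of the variational-solution machinery in this paper. That elementary route is the intended one; any proof that routes back through the compactness results of Section \ref{sec:compactness} is logically inadmissible here.
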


The proof of this may be found in \cite{AC}[Theorem 6.3].

\begin{lemma}\label{lem:babydensityest}
	Let $(u, \chi_{\{u > 0\}})$ be a variational solution on $B_r(x)$ with $x \in \p \{u > 0\}$. Then
	\[
		|\{u > 0\} \cap B_{r}(x)| \geq c r^n,
	\]
	where $c = c(n, C_V) > 0$.
\end{lemma}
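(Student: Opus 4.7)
The plan is to combine the monotonicity formula with the density lower bound from Lemma \ref{lem:density0char}, using the Lipschitz bound on $u$ to convert an energy estimate into a measure estimate.

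Since $x\in \p\{u>0\}$ and $u$ is continuous, $u(x)=0$ and $x$ cannot lie in the interior of $\{u=0\}$. Applying Lemma \ref{lem:density0char} (which is applicable since $B_{r/2}(x)\cc B_r(x)=\W$) therefore yields
\[
    M_x(u;0+)\geq |B_1|/2.
\]
By the monotonicity formula in Proposition \ref{prop:monotone}, $s\mapsto M_x(u;s)$ is nondecreasing on $(0,r)$, hence $M_x(u;s)\geq |B_1|/2$ for every $s\in(0,r)$. Since $H_x(u;s)\geq 0$, this gives the energy lower bound
\[
    D_x(u;s)=M_x(u;s)+H_x(u;s)\geq |B_1|/2.
\]

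To turn this into a measure estimate, I use that $u$ is Lipschitz with $|\n u|\leq C_V$ and attains its minimum value $0$ on $\{u=0\}$; by the standard Stampacchia fact, $\n u=0$ a.e.\ on $\{u=0\}$, so
\[
    D_x(u;s)=\frac{1}{s^n}\int_{B_s(x)}\!\left(|\n u|^2+\chi_{\{u>0\}}\right)
    \leq \frac{C_V^2+1}{s^n}\,\bigl|\{u>0\}\cap B_s(x)\bigr|.
\]
Combining with the lower bound on $D_x(u;s)$ gives
\[
    \bigl|\{u>0\}\cap B_s(x)\bigr|\geq \frac{|B_1|}{2(C_V^2+1)}\, s^n
\]
for every $s\in(0,r)$. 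Letting $s\to r^-$ (using that $s\mapsto |\{u>0\}\cap B_s(x)|$ is continuous from the left) produces the desired estimate with $c=|B_1|/[2(C_V^2+1)]$.

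I do not anticipate any genuine obstacle: the whole argument is a direct assembly of the monotonicity formula, the density lower bound already established in Lemma \ref{lem:density0char} (whose proof via Sperner's inequality is the real content), and the trivial Lipschitz energy bound. The only minor point to be careful about is the distinction between the radius $r$ appearing in the present statement and the one in the hypothesis of Lemma \ref{lem:density0char}, but this is handled by applying the latter on a sub-ball such as $B_{r/2}(x)$.
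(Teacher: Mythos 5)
Your proof is correct and follows essentially the same approach as the paper: apply Lemma \ref{lem:density0char} to get $M_x(u;0+)\geq |B_1|/2$, use monotonicity and $H_x\geq 0$ to bound $D_x$ from below, and then bound $D_x$ from above by $(1+C_V^2)|\{u>0\}\cap B_s(x)|/s^n$. Your version is slightly more explicit about applying Lemma \ref{lem:density0char} on a sub-ball and about the fact that $\n u=0$ a.e.\ on $\{u=0\}$, but the argument is the same as the paper's.
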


\begin{proof}
	We show this with $x = 0$ and $r = 1$. By Lemma \ref{lem:density0char}, we have that $M_0(0+) \geq |B_{1}|/2$, so
	\[
		\frac{|B_1|}{2} \leq M_0(0+) \leq M_0(1) \leq \int_{B_1} \left(|\n u|^2 + \chi_{\{u > 0\}}\right) \leq (1 + C_V^2)|\{u > 0\} \cap B_1|.
	\]
\end{proof}

The following theorem proves Theorem \ref{thm:compact_intro}.
\begin{theorem}\label{classcomp}
	Let $u_k$ be a sequence of classical solutions on $B_1$, with
	\[
		\sup_k \|\n u_k \|_{C^{0,1}(B_1)} < \infty.
	\]
	Then along a subsequence, $u_k \rightarrow u \in C^{0, 1}$ locally uniformly, $(u, \chi_{\{u > 0\}})$ is a variational solution, and for $\cH^{n-1}$-a.e. $x \in \Sigma^H(u)$,
	\[
		\a(n)\sqrt{H_x(u; 0+)} \leq 1.
	\]
	Moreover, either $u \equiv 0$ or $\chi_{\{u_k > 0\}} \rightarrow \chi_{\{u > 0\}}$ in $L^1_{\mathrm{loc}}(B_1)$ and $\p \{u_k > 0\} \rightarrow \p \{u > 0\}$ locally in $B_1$ in Hausdorff topology.
Most importantly,
$\p \{u > 0\}$ is countably $\cH^{n-1}$-rectifiable, has locally in $B_1$ 
finite $\cH^{n-1}$-measure, and
	\[
		\Delta u = \cH^{n-1}\mres \p^* \{ u > 0\} + 2 \a(n) \sqrt{H_x(u; 0+)} \cH^{n-1}\mres \Sigma^H.
	\]
\end{theorem}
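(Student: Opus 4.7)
My plan is to extract a subsequence $u_k \to u$ locally uniformly via Arzelà--Ascoli using the uniform Lipschitz bound, so that $u$ is nonnegative and Lipschitz with the same constant $C_V$. Since classical solutions are variational solutions (with $\chi = \chi_{\{u_k > 0\}}$), Lemma \ref{lem:BV} yields uniform $BV_{\text{loc}}$-bounds on $\chi_k := \chi_{\{u_k > 0\}}$, and a further subsequence converges in $L^1_{\text{loc}}$ and pointwise a.e.\ to some $\chi \in \{0,1\}$ with $\chi \geq \chi_{\{u > 0\}}$. Interior elliptic estimates on compact subsets of $\{u > 0\}$ (which eventually sit inside $\{u_k > 0\}$ by uniform convergence) give $u \in C^\infty(\{u > 0\})$ harmonic there, and uniform limits of subharmonic functions are subharmonic, so $u$ is subharmonic on $B_1$.

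\textbf{Passing the variational identity.}
The core step is obtaining strong $W^{1,2}_{\text{loc}}$ convergence, which will allow the nonlinear identity \eqref{eq:variationalformula} to pass to the limit. For each nonnegative cutoff $\eta$, the classical structure of $u_k$ (harmonic in $\{u_k > 0\}$ with $u_k = 0$ on $\partial\{u_k > 0\} = \supp \Delta u_k$) gives the identity $\int \eta |\nabla u_k|^2 = -\int u_k \nabla \eta \cdot \nabla u_k$ by integrating $\nabla(u_k \eta) \cdot \nabla u_k$ against $\Delta u_k$. The analogous identity holds for the limit $u$, since $\Delta u$ is a nonnegative Radon measure (subharmonicity) supported in $\{u = 0\}$ (harmonicity where positive), so $u \,\Delta u \equiv 0$ and integration by parts yields $\int \eta |\nabla u|^2 = -\int u \nabla \eta \cdot \nabla u$. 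The right-hand sides converge by weak $W^{1,2}$ plus strong $L^2$ convergence, forcing $\int \eta |\nabla u_k|^2 \to \int \eta |\nabla u|^2$ and hence the strong $W^{1,2}_{\text{loc}}$ convergence. Together with the $L^1_{\text{loc}}$ convergence of $\chi_k$ this transfers \eqref{eq:variationalformula} to $(u, \chi)$, which is therefore a variational solution.

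\textbf{Identification of $\chi$, distributional Laplacian, and Hausdorff convergence.}
Theorem \ref{thm:chi} applied to $(u, \chi)$ yields the dichotomy $u \equiv 0$ or $\chi = \chi_{\{u > 0\}}$ a.e.; assume the latter. The claimed $L^1_{\text{loc}}$-convergence $\chi_{\{u_k > 0\}} \to \chi_{\{u > 0\}}$ is then immediate, and Theorem \ref{thm:distributional} applied to $(u, \chi_{\{u > 0\}})$ delivers the $\cH^{n-1}$-rectifiability, local finiteness of $\cH^{n-1}(\partial \{u > 0\})$, and the formula for $\Delta u$. For Hausdorff convergence I establish two inclusions. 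If $x_k \in \partial \{u_k > 0\}$ with $x_k \to x$, Lemma \ref{lem:babydensityest} gives $|B_r(x_k) \cap \{u_k > 0\}| \geq c r^n$, which via $L^1_{\text{loc}}$-convergence passes to $|B_r(x) \cap \{u > 0\}| > 0$, so $x \in \overline{\{u > 0\}}$; combined with $u(x) = \lim u_k(x_k) = 0$ this gives $x \in \partial \{u > 0\}$. Conversely, given $x \in \partial \{u > 0\}$ and small $r$, pick $z \in B_r(x)$ with $u(z) > 0$, so $u_k(z) \geq u(z)/2$ for large $k$; if $B_r(x) \subset \{u_k > 0\}$ held for infinitely many $k$, Harnack's inequality on $B_r(x)$ would force $u_k(x) \geq c u_k(z) \geq c u(z)/2$, contradicting $u_k(x) \to 0$. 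Thus $\partial \{u_k > 0\} \cap B_r(x) \neq \varnothing$ for all $k$ large, producing approximating free boundary points.

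\textbf{The density bound $\a(n)\sqrt{H_x(u; 0+)} \leq 1$.}
For $\cH^{n-1}$-a.e.\ $x \in \Sigma^H$, Lemma \ref{lem:singular1density} gives $v_{x,r}(y) = u(x+ry)/(r \sqrt{H_x(u;r)}) \to \a(n) |y_n|$ strongly in $W^{1,2}_{\text{loc}}$ after rotation. If $H_x(u; 0+) = 0$, the bound is trivial, so assume $H_x(u; 0+) > 0$; then $v_{x,r}$ is uniformly Lipschitz for small $r$, and Arzelà--Ascoli plus uniqueness of the limit upgrade the convergence to local uniform. Multiplying by $\sqrt{H_x(u;r)} \to \sqrt{H_x(u;0+)}$ yields the uniform convergence $u(x+ry)/r \to \a(n)\sqrt{H_x(u;0+)} |y_n|$ on $\overline{B_1}$. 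In parallel, I transfer Proposition \ref{prop:lip} to the limit: using the approximating $x_k \in \partial\{u_k > 0\}$ with $x_k \to x$ from the previous step, $|\nabla u_k| \leq 1 + \omega(r)$ on $B_r(x_k) \supset B_{r/2}(x)$ and $C^\infty_{\text{loc}}$-convergence of $u_k \to u$ on $\{u > 0\}$ give $|\nabla u| \leq 1 + \omega(r)$ on $\{u > 0\} \cap B_{r/2}(x)$. Integrating this bound along segments from any $z \in B_{r/2}(x)$ to its nearest point of $\{u = 0\}$ (which lies inside $B_{r/2}(x)$ since $x$ itself is such a point, and the segment then stays in $\{u > 0\}$) yields $u(z) \leq (1 + \omega(r)) |z - x|$. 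Rescaling and letting $r \to 0$ produces $\a(n)\sqrt{H_x(u;0+)} |y_n| \leq |y|$ on $\overline{B_1}$, and setting $y = e_n$ gives the claim. I anticipate the main obstacles to lie in Paragraph 2 (achieving strong $W^{1,2}$-convergence noncircularly, which uses the classical vanishing $u_k|_{\supp \Delta u_k} = 0$ in a crucial way) and in the clean transfer of the asymptotic Lipschitz bound in Paragraph 4, which depends on both the Hausdorff convergence of boundaries and elliptic compactness on $\{u > 0\}$.
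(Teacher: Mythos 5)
Your proposal is correct and follows essentially the same route as the paper: extract a subsequence via Arzelà--Ascoli and the $BV$ bound; upgrade to strong $W^{1,2}_{\mathrm{loc}}$ convergence via the identity $\int \eta|\nabla u_k|^2 = -\int u_k\nabla\eta\cdot\nabla u_k$ (valid because $u_k$ vanishes on $\supp\Delta u_k$) and the same identity for the limit; pass the domain-variation identity to the limit; invoke Theorem \ref{thm:chi} to identify $\chi$; use Theorem \ref{thm:distributional} for the rectifiability and the formula for $\Delta u$; derive the Hausdorff convergence of free boundaries from Lemma \ref{lem:babydensityest} and Harnack; and combine Proposition \ref{prop:lip} with Lemma \ref{lem:singular1density} to get $\a(n)\sqrt{H_x(u;0+)}\leq 1$. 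Your last step is slightly more verbose than the paper's (you explicitly upgrade the $W^{1,2}$-convergence of $v_{x,r}$ to uniform convergence and integrate the gradient bound along segments, where the paper simply reads the bound off the $C^{0,1}$-norm of the rescalings), and you organize the two inclusions of the Hausdorff-convergence argument in the opposite order, but these are purely presentational differences. One small bookkeeping slip: the segment from $z\in B_{r/2}(x)$ to its nearest zero $z_0$ of $u$ is only guaranteed to lie in $B_r(x)$, not $B_{r/2}(x)$, so either take $z\in B_{r/4}(x)$ or apply the gradient bound on $B_r(x)$; this does not affect the conclusion.
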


The hypothesis is satisfied, for example, for every sequence $u_k$ of classical solutions on $B_{1 + \d}$ with $u_k(0) = 0$.

\begin{proof}
	Take a subsequence along which $u_k \rightarrow u$ locally uniformly and weakly in $W^{1, 2}$, and so $\Delta u_k \rightarrow \Delta u$ in the sense of distributions as $k\to\infty$. The limit $u$ must be Lipschitz continuous and harmonic where positive. It is then straightforward to check that $u_k \rightarrow u$ strongly in $W^{1, 2}_{\mathrm{loc}}(B_1)$ as $k\to\infty$:
	\[
		\int \eta |\n u|^2 = - 2 \int u \n u \cdot \n \eta = \lim_{k\to\infty} - 2 \int u_k \n u_k \cdot \n \eta = \lim \int \eta |\n u_k|^2
	\]
	for any $\eta \in C_c^\infty(B_1)$. Using Lemma \ref{lem:BV}, we have that $\chi_{\{u_k > 0\}} \rightarrow \chi$ in $L^1_{\mathrm{loc}}(B_1)$ as $k\to\infty$ and a.e. along a further subsequence, with $\chi \in \{0, 1\}$ almost everywhere. We may then pass \eqref{eq:variationalformula} to the limit, showing that $(u, \chi)$ is a variational solution.

	By Theorem \ref{thm:chi}, $(u, \chi_{\{u > 0\}})$ is also a variational solution, and unless $u \equiv 0$, $\chi_{\{u_k > 0\}} \rightarrow \chi_{\{u > 0\}}$ locally in $L^1$.
Moreover,
for each $x \in \p \{u > 0\}$, we have that $x = \lim_{k\to\infty} x_k$ for some $x_k \in \p \{u_k > 0\}$. Indeed, were this not true, for some $\d$ we would either have $u_k = 0$ on $B_\d(x)$, in which case $u = 0$ on $B_\d(x)$ and $x \notin \p \{u > 0\}$, or $u_k > 0$ on $B_\d(x)$, in which case $\sup_{B_{\d/2}(x)} u_k \rightarrow 0$ by Harnack inequality and again $x\notin \p \{u > 0\}$.
It remains to show that for any $x = \lim x_k \in B_1$ with $x_k \in \p \{u_k > 0\}$, we have that $x \in \p \{u > 0\}$. 
By uniform convergence of $u_k$, $u(x)=0$.
On the other hand, if $x \in \{u = 0\}^\circ$, then there is a ball $B_\d(x)$ with $u = 0$ on it, and so from the $L^1$-convergence of $\chi_{\{u_k > 0\}}$ we have $|B_\d(x) \cap \{u_k > 0\}| \rightarrow 0$. For any $\eta$ and large $k$, then, $|B_{\d/2}(x_k) \cap \{u_k > 0\}| < \eta \d^n$. But from Lemma \ref{lem:babydensityest} (and Proposition \ref{prop:lip}) we have that $|B_{\d/2}(x_k) \cap \{u_k > 0\}| > c(n) \d^n$, which is a contradiction for sufficiently small $\eta$.

	Take an $x\in \Sigma^H$ at which $\frac{u(x + r y)}{r \sqrt{H_x(u; r)}} \rightarrow \a(n) |y_n|$ as $r\to 0$ (this happens at a.e. $x \in \Sigma^H$ by Lemma \ref{lem:singular1density}). Applying Proposition \ref{prop:lip} on $B_\d(x_k)$ for $x_k \rightarrow x$ with $u_k(x_k) = 0$ and taking limits, we have that for all $\d$ small,
	\[
		\sup_{B_\d(x)} |\n u| \leq 1 + \w(\d).
	\]
	Then
	\[
		\limsup_{r \to 0 } \left\Vert\frac{u(x + r \cdot)}{r}\right\Vert_{C^{0, 1}(\overline{B_1})} \leq 1,
	\]
	so $\a(n)\sqrt{H_x(u; 0+)}  \leq 1$.

The last statements follow from Theorem \ref{thm:distributional}.
\end{proof}

\begin{remark} An analogous compactness theorem can be proved for limits of arbitrary variational solution $u_k$ (uniformly bounded in $C^{0,1}$), except for the conclusion $\a(n)\sqrt{H_x(u; 0+)} \leq 1$. If the $u_k$ are also viscosity solutions, then $\a(n)\sqrt{H_x(u; 0+)} \leq 1$ can be recovered using Lemma \ref{lem:singular1density}: at almost every point $x \in \Sigma^H$ where $\a(n)\sqrt{H_x(u; 0+)} > 0$, we have that $u(x + r y)/r \rightarrow \a(n)\sqrt{H_x(u; 0+)} |y_n|$ strongly in $W^{1,2}$ and \emph{locally uniformly}. It follows that the limit function $\a(n)\sqrt{H_x(u; 0+)} |y_n|$ is also a viscosity solution, and one can check this is only true if $\a(n)\sqrt{H_x(u; 0+)} \leq 1$.

	As the properties of being a viscosity or variational solution are preserved under limits, and viscosity solutions satisfy a Bernstein-type gradient bound (see \cite{CS}), this is a useful class of solutions to consider.
\end{remark}

\section{Application to a singular perturbation problem}\label{sec:perturbation}
Consider the singular perturbation
problem
\begin{equation}\label{epsprob}
\Delta u_\epsilon=\beta_\epsilon(u_\epsilon) \> \textrm{ in } \Omega,
\end{equation}
where $\Omega$ is a bounded domain in $\R^n$, $\epsilon>0$ and
$\beta_\epsilon(s)=\frac{1}{\epsilon}\beta(\frac{s}{\epsilon})$.
Here $\beta$ is a Lipschitz continuous function such that $\beta>0$ in
$(0, 1)$, $\beta\equiv 0$ outside $(0, 1)$ and
$\int_0^1\beta(s)\> ds=\frac{1}{2}$.
Note that we do not assume that $u_\epsilon$ is a minimizer or a stable solution.

The problem arises in the mathematical analysis of
equidiffusional flames (see \cite{bcn}, \cite{buckmaster}),
in which case $\epsilon$ is proportional to the inverse of the
activation energy.

Formally, as $\epsilon\to 0$, the solutions $u_\epsilon$
converge to a solution $u$ of the free boundary
problem 
\begin{equation}\label{fbp}
\begin{array}{ll}
\Delta u=0  &\hbox{in}\ \Omega\setminus\partial\{u>0\},\\
u=0, \ (u^+_\nu)^2-(u^-_\nu)^2=1 & \hbox{on}\
\partial\{u>0\}.
\end{array}
\end{equation}

Luis Caffarelli established a locally
uniform Lipschitz estimate for bounded solutions $u_\epsilon$
(see \cite{cafsing}).
In \cite{lederman}, C. Lederman and
N. Wolanski proved that $u$ is a viscosity solution of 
(\ref{fbp}). They also proved that
$u$ satisfies the free boundary condition in a pointwise sense
at non-degenerate free boundary points at which there is an inward unit normal of
$\{u>0\}$ in the measure-theoretic sense. 

However, uniformly bounded solutions may converge to limits exhibiting 
degenerate singular free boundary points (see \cite{weisszhang}).

In \cite{W03} the second author showed that, in a special case
of the time-dependent problem treated therein,
\[
		\Delta u = \cH^{n-1}\mres \p^* \{ u > 0\} + 2 \theta(x) \cH^{n-1}\mres \Sigma_{**}
+ \lambda,
	\]
where 
$\lambda$ is a measure supported on the degenerate singular set.
However, the question whether $\lambda$ is nonzero and
whether its support can be a large set has been an open problem
since \cite{W03}, 
except for the case of two dimensions where David Jerison and 
the second author applied (see \cite[Introduction]{W03}) a result by Tom Wolff on the harmonic
measure \cite{wolff} to show that, under mild assumptions to the set $\{ u > 0\}$,
$\lambda$ is zero and the topological free boundary is contained in a set
of $\sigma$-finite length.
In higher dimensions, the issue has been a challenge in view of
examples of extremely irregular harmonic measures such as \cite[Example 3]{wolff2}.

Here we show the following, proving Theorem \ref{thm:perturb_intro}.
\begin{theorem}\label{singpert}
Let $(u_\epsilon)_{\epsilon\in (0,1)}$ be a uniformly bounded family of solutions
of \eqref{epsprob} on $B_1$.
Then each limit $u$ as $\epsilon\to 0$ along a subsequence
satisfies the following:
$(u,\chi_{\{u>0\}})$ is a variational solution,
 $\p \{u > 0\}$ is countably $\cH^{n-1}$-rectifiable, has locally in $B_1$ 
finite $\cH^{n-1}$-measure, and
	\[
		\Delta u = \cH^{n-1}\mres \p^* \{ u > 0\} + 2 \a(n) \sqrt{H_x(u; 0+)} \cH^{n-1}\mres \Sigma^H;\]
moreover,  $\a(n) \sqrt{H_x(u; 0+)}\leq 1$ for $\cH^{n-1}$-a.e. $x\in \Sigma^H$. 
\end{theorem}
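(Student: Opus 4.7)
The plan is to first show that the limit $u$, paired with $\chi_{\{u>0\}}$, is a variational solution in the sense of Section \ref{sec:prelimvar}, then invoke Theorem \ref{thm:distributional} for the rectifiability and Laplacian decomposition, and finally use the viscosity solution property of $u$ to bound $H_x(u; 0+)$.

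For compactness, Caffarelli's uniform Lipschitz estimate \cite{cafsing} together with $u_\epsilon \geq 0$ yields, along a subsequence, $u_\epsilon \to u$ locally uniformly and weakly in $W^{1,2}_{\mathrm{loc}}(B_1)$, with $u$ nonnegative, Lipschitz, and harmonic on $\{u>0\}$. Since $\Delta u_\epsilon = \beta_\epsilon(u_\epsilon) \geq 0$, the $u_\epsilon$ are subharmonic with locally uniformly bounded Laplacian mass, so $\Delta u_\epsilon \rightharpoonup \Delta u$ weakly-$*$ as Radon measures. For $\eta \in C^\infty_c(B_1)$ with $\eta \geq 0$, the identity
\[
\int \eta |\nabla u_\epsilon|^2 = -\int u_\epsilon \nabla \eta \cdot \nabla u_\epsilon - \int \eta u_\epsilon \, d\Delta u_\epsilon
\]
then passes to the limit, upgrading weak to strong $W^{1,2}_{\mathrm{loc}}$-convergence. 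Multiplying $\Delta u_\epsilon = \beta_\epsilon(u_\epsilon)$ by $\xi \cdot \nabla u_\epsilon$ for $\xi \in C^{0,1}_c(B_1;\R^n)$, integrating by parts, and writing the right-hand side as a divergence via $B_\epsilon(s) := \int_0^s \beta_\epsilon$, we obtain
\[
\int \bigl( (|\nabla u_\epsilon|^2 + 2 B_\epsilon(u_\epsilon)) \dvg \xi - 2 D\xi \nabla u_\epsilon \cdot \nabla u_\epsilon \bigr) = 0.
\]

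Next I identify the limit of $2 B_\epsilon(u_\epsilon)$. We have $0 \leq 2B_\epsilon \leq 1$ and $2B_\epsilon(s) \to \chi_{(0,\infty)}(s)$ pointwise by the normalization $\int_0^1 \beta = 1/2$. On $\{u > 0\}$, uniform convergence forces $u_\epsilon > \epsilon$ eventually, so $2B_\epsilon(u_\epsilon) \to 1$ pointwise. On compact subsets of $\{u=0\}^\circ$, a standard non-degeneracy argument from the singular perturbation literature (see \cite{cafsing,lederman}) yields $u_\epsilon/\epsilon \to 0$, hence $2B_\epsilon(u_\epsilon) \to 0$ there. Since $\partial\{u>0\}$ has Lebesgue measure zero (either a posteriori from its $\cH^{n-1}$-finiteness, or directly via the $BV$-argument of Lemma \ref{lem:BV} applied to the weak-$*$ limit), dominated convergence gives $2 B_\epsilon(u_\epsilon) \to \chi_{\{u>0\}}$ in $L^1_{\mathrm{loc}}$. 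Passing to the limit in the variational identity using this and the strong $W^{1,2}$-convergence shows that $(u, \chi_{\{u>0\}})$ is a variational solution. Theorem \ref{thm:distributional} then yields countable $\cH^{n-1}$-rectifiability, locally finite $\cH^{n-1}$-measure of $\partial\{u>0\}$, and the claimed Laplacian decomposition.

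Finally, for $\a(n)\sqrt{H_x(u; 0+)} \leq 1$ at $\cH^{n-1}$-a.e.\ $x \in \Sigma^H$: by \cite{lederman}, $u$ is a viscosity solution of the one-phase Bernoulli problem. By Lemma \ref{lem:singular1density}, at $\cH^{n-1}$-a.e.\ such $x$ the normalized rescalings converge in $W^{1,2}_{\mathrm{loc}}$ to $\a(n)|y_n|$; since the rescalings $u(x + r\cdot)/r$ are uniformly Lipschitz in $r$, Arzel\`a--Ascoli combined with this convergence forces them to converge locally uniformly to $A|y_n|$ where $A := \a(n)\sqrt{H_x(u; 0+)}$. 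Viscosity solutions being closed under locally uniform convergence, $A|y_n|$ is itself a viscosity solution, and testing the subsolution condition with the linear touching function $\phi(y) = A y_n$---which satisfies $\phi \leq A|y_n|$ with $\phi(0) = 0$ and $0 \in \partial\{A|y_n| > 0\}$---forces $|\nabla \phi(0)| = A \leq 1$. The main obstacle is the $L^1_{\mathrm{loc}}$-identification $2 B_\epsilon(u_\epsilon) \to \chi_{\{u>0\}}$, i.e.\ the non-degeneracy $u_\epsilon/\epsilon \to 0$ on $\{u = 0\}^\circ$; without this, the natural weak-$*$ limit $\chi$ of $2 B_\epsilon(u_\epsilon)$ need not be $\{0,1\}$-valued and qualification as a variational solution becomes subtle.
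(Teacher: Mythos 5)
Your compactness setup, the domain-variation identity, and the viscosity argument for $\a(n)\sqrt{H_x(u;0+)}\leq 1$ are all fine and track the paper's logic, but the identification of the limit of $2B_\epsilon(u_\epsilon)$ has a genuine gap that misrepresents the structure of the argument.

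You claim that ``a standard non-degeneracy argument from the singular perturbation literature yields $u_\epsilon/\epsilon\to 0$ on compact subsets of $\{u=0\}^\circ$,'' hence $2B_\epsilon(u_\epsilon)\to 0$ there. This is not standard for general (non-minimizing, non-stable) solutions of \eqref{epsprob}: it is precisely the kind of degeneracy the whole paper is designed to confront, and there is no a priori reason that $u_\epsilon$ cannot oscillate at scale $\epsilon$ inside a region where $u\equiv 0$, producing a limit $\chi$ with $\chi=1$ on part of $\{u=0\}^\circ$. The results of \cite{cafsing} and \cite{lederman} do not give this in the required generality; they give it under additional non-degeneracy hypotheses or for minimizers. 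What one can get cheaply (as in \cite{weisszhang}, cited by the paper) is that $2B_\epsilon(u_\epsilon)\to\chi$ in $L^1_{\mathrm{loc}}$ along a subsequence for some $\{0,1\}$-valued $\chi$ satisfying only $\chi\geq\chi_{\{u>0\}}$. The fact that $\chi=\chi_{\{u>0\}}$ a.e.\ (unless $u\equiv 0$) is exactly Theorem~\ref{thm:chi}, which is a downstream consequence of the main structural theorem (Theorem~\ref{thm:main}) of the paper, not an input. Your proposal in effect assumes one of the paper's main conclusions in order to verify the hypothesis of a variational solution.

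The correct route — and the one the paper takes — is: first establish that $(u,\chi)$ is a variational solution with $\chi$ the (possibly strictly larger) $L^1_{\mathrm{loc}}$-limit of $2B_\epsilon(u_\epsilon)$, using the passage to the limit in Lemma~\ref{domvar} together with the strong $W^{1,2}_{\mathrm{loc}}$ and $L^1_{\mathrm{loc}}$ convergences; \emph{then} invoke Theorem~\ref{thm:chi} to conclude $\chi=\chi_{\{u>0\}}$ a.e. Your own closing sentence flags the difficulty, but treats it as a technical loose end rather than recognizing that it is the load-bearing step supplied by the paper's machinery. Once you reorder the argument so that the identification $\chi=\chi_{\{u>0\}}$ comes from Theorem~\ref{thm:chi} rather than an unavailable non-degeneracy lemma, the rest of your proof — Theorem~\ref{thm:distributional} for the decomposition, and the viscosity-solution touching argument for the bound on $\a(n)\sqrt{H_x(u;0+)}$ — goes through as written.
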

\begin{remark}\label{rem:harmonicmeasure}
It follows that the harmonic measure of the set $\{ u > 0\}$ is supported on a 
countably $\cH^{n-1}$-rectifiable set of locally 
finite $\cH^{n-1}$-measure. Under additional mild assumptions to the set $\{ u > 0\}$
which ensure that a boundary Harnack inequality holds, this harmonic measure and
the Laplacian measure $\Delta u$ are comparable, ruling out even more 
pathological behaviors observed for harmonic measure. For example, \cite{wolff2} constructs domains $\W$ with harmonic measure not absolutely continuous with respect to $\cH^{n-1}\mres \W$, while \cite{Wu} does so even for domains with rectifiable boundaries with locally finite $\cH^{n-1}$ measure.

The remark remains true for all variational solutions, including
the limits of classical solutions of the Bernoulli problem 
in Section \ref{sec:compactness}.
\end{remark}

In order to apply the main theorem of our paper,
it is sufficient to show that each limit is a variational
solution. To this end, we cite several known results:
\begin{lemma}[\cite{cafsing}]\label{uniforml Lipschitz bound}
Let $(w_\epsilon)_{\epsilon\in (0,1)}$ be a family of solutions to $\Delta
w_\epsilon=\beta_\epsilon(w_\epsilon)$ in a domain $\Omega\subset
\mathbb{R}^n$ such that $||w_\epsilon||_{L^\infty(\Omega)}\leq C$ for some
constant $C$. Let $K\subset\Omega$ be a compact set and let $\tau>0$ be such
that $B_\tau(x^0)\subset\Omega$ for every $x^0\in K$. Then there
exists a constant $L=L(\tau, C)$, such that
$$
|\nabla w_\epsilon(x)|\leq L \ \ \ \mathrm{for}\  x\in K.
$$
\end{lemma}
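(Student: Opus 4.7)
The plan is to reduce the statement to the main theorem \ref{thm:intro} by showing that each subsequential limit $u$ of $(u_\epsilon)$, together with a suitable $\chi$, forms a variational solution in the sense of Section \ref{sec:prelimvar}. By Lemma \ref{uniforml Lipschitz bound} the family $(u_\epsilon)$ is locally uniformly Lipschitz, so after extraction $u_\epsilon \to u$ locally uniformly and weakly in $W^{1,2}_{\mathrm{loc}}(B_1)$, with $u$ Lipschitz and nonnegative. On every compact set $K \cc \{u>0\}$ we have $u_\epsilon \geq \epsilon$ for $\epsilon$ small, hence $\beta_\epsilon(u_\epsilon) \equiv 0$ on $K$; standard interior elliptic estimates then upgrade the convergence to $C^2_{\mathrm{loc}}(\{u>0\})$, giving harmonicity and property (1) in the definition of variational solution.

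To produce a compatible $\chi$ and the variational identity I would pass the inner variation identity for $u_\epsilon$ to the limit. Multiplying $\Delta u_\epsilon = \beta_\epsilon(u_\epsilon)$ by $\xi \cdot \nabla u_\epsilon$ and integrating by parts yields, for every $\xi \in C_c^{0,1}(B_1;\R^n)$,
\[
\int\left[(|\nabla u_\epsilon|^2 + 2B_\epsilon(u_\epsilon))\,\dvg\xi - 2\nabla u_\epsilon \cdot D\xi\,\nabla u_\epsilon\right] = 0,
\]
where $B_\epsilon(s)=\int_0^s\beta_\epsilon(\tau)\,d\tau \in [0,1/2]$ and $B_\epsilon(s)=1/2$ for $s\geq\epsilon$. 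The sequence $2B_\epsilon(u_\epsilon)$ admits a weak-$*$ $L^\infty$ subsequential limit $\chi \in L^\infty(B_1;[0,1])$ with $\chi \equiv 1$ a.e.\ on $\{u>0\}$, so $\chi \geq \chi_{\{u>0\}}$. The delicate point is the passage to the limit of the quadratic gradient terms together with the verification that $\chi$ is $\{0,1\}$-valued; following the method of \cite{W03}, one first establishes strong $W^{1,2}_{\mathrm{loc}}$ convergence $u_\epsilon \to u$ (the boundary term $\int u_\epsilon\beta_\epsilon(u_\epsilon)\eta$ in the integration-by-parts identity vanishes in the limit because the integrand is uniformly bounded and supported on the shrinking transition strip $\{0<u_\epsilon<\epsilon\}$, whose measure can be controlled via non-degeneracy), and then establishes a uniform BV bound on $\chi_{\{u_\epsilon>\epsilon\}}$ analogous to Lemma \ref{lem:BV}, yielding $L^1_{\mathrm{loc}}$ convergence to a set of finite perimeter which coincides with $\{\chi=1\}$. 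This is the step I expect to be most technical, although for the singular perturbation problem it is essentially contained in \cite{W03}.

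Once $(u,\chi)$ is known to be a variational solution, Theorem \ref{thm:intro}(i) gives the dichotomy: either $u \equiv 0$ (all conclusions are then vacuous) or $\chi = \chi_{\{u>0\}}$, so $(u,\chi_{\{u>0\}})$ is a variational solution and parts (ii), (iii) of Theorem \ref{thm:intro} yield the rectifiability, local finiteness of $\cH^{n-1}$-measure, and the identification of $\Delta u$. It remains to prove $\alpha(n)\sqrt{H_x(u;0+)}\leq 1$ for $\cH^{n-1}$-a.e.\ $x\in\Sigma^H$. By the result of Lederman and Wolanski \cite{lederman} the limit $u$ is a viscosity solution of \eqref{fbp}, and the class of viscosity solutions is closed under locally uniform limits. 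By Lemma \ref{lem:singular1density}, for $\cH^{n-1}$-a.e.\ $x\in\Sigma^H$ the rescalings $u(x+r\cdot)/r$ converge locally uniformly and strongly in $W^{1,2}_{\mathrm{loc}}$ to $\alpha(n)\sqrt{H_x(u;0+)}\,|y_n|$ in suitable coordinates; hence this limit is itself a viscosity solution. Testing by the smooth touching function $\phi(y)=\alpha(n)\sqrt{H_x(u;0+)}\,y_n$, which satisfies $\phi\leq\alpha(n)\sqrt{H_x(u;0+)}\,|y_n|$ with equality at the origin, the viscosity supersolution condition forces $\alpha(n)\sqrt{H_x(u;0+)}\leq 1$, exactly as in the remark following Theorem \ref{classcomp}. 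This completes the proof.
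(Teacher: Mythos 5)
There is a fundamental mismatch: the statement you were asked to prove is the uniform interior Lipschitz estimate of Caffarelli for solutions of the singular perturbation equation $\Delta w_\epsilon=\beta_\epsilon(w_\epsilon)$ (Lemma \ref{uniforml Lipschitz bound}, which the paper takes from \cite{cafsing} without reproving it), but your argument never addresses this estimate at all. In fact your very first step \emph{invokes} Lemma \ref{uniforml Lipschitz bound} to get local equi-Lipschitz continuity of the family $(u_\epsilon)$, which is circular, and everything that follows --- passing the inner variation identity to the limit, identifying $\chi$, applying Theorem \ref{thm:intro}, and recovering $\alpha(n)\sqrt{H_x(u;0+)}\le 1$ via the viscosity property --- is a sketch of the proof of Theorem \ref{singpert} (equivalently Theorem \ref{thm:perturb_intro}), a downstream result, not of the lemma in question.

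A proof of the actual statement must produce the gradient bound $|\nabla w_\epsilon|\le L(\tau,C)$ on $K$, uniformly in $\epsilon$, from only the $L^\infty$ bound and the structure of $\beta_\epsilon$ (support in $(0,\epsilon)$ after scaling, $\int_0^1\beta=\tfrac12$). Nothing of this kind appears in your write-up: there is no linear-growth estimate near the transition set $\{0<w_\epsilon<\epsilon\}$, no comparison/Harnack argument controlling $w_\epsilon$ in terms of the distance to that set, and no interior gradient estimate for the harmonic region combined with a matching bound across the reaction zone --- which is the substance of Caffarelli's argument in \cite{cafsing}. As written, the proposal proves a different theorem while assuming the one assigned, so the required proof is entirely missing.
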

\begin{lemma}[see for example {\cite[Proposition 4.3]{weisszhang}}]\label{conv}
Let $(u_\epsilon)_{\epsilon\in (0,1)}$ be a uniformly bounded family of solutions
of \eqref{epsprob} on $B_1$.
There exist a sequence $(u_{\epsilon_i})_{i\in \N}$ and a locally in $B_1$
Lipschitz continuous function $u$ such that\\
1) $u_{\epsilon_i}\rightarrow u$ locally uniformly in $B_1$, \\
2) $u_{\epsilon_i}\rightarrow u$ in $W^{1, 2}_{\mathrm{loc}}(B_1)$, \\
3) $u$ is harmonic in $B_1\setminus\partial\{u>0\}$, \\
4) $\Delta u_{\epsilon_i}\rightarrow \mu$ as measures on $B_1$ as $i\to\infty$; here
$\mu$ is a locally finite non-negative measure supported on the free boundary
$\partial\{u >0\}$.Therefore $$ \Delta u=\mu\qquad \hbox{in}\
B_1.$$
\end{lemma}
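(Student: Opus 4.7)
The plan is to reduce the theorem to Theorem \ref{thm:intro} by showing that each subsequential limit $u$ supplied by Lemma \ref{conv} pairs with $\chi_{\{u>0\}}$ to form a variational solution; the rectifiability, $\mathcal{H}^{n-1}$-finiteness, and identification of $\Delta u$ are then immediate from Theorem \ref{thm:distributional}. The starting point is the inner variation identity for $u_\epsilon$: testing $\Delta u_\epsilon = \beta_\epsilon(u_\epsilon)$ against $\xi\cdot\nabla u_\epsilon$ for $\xi\in C^{0,1}_c(B_1;\R^n)$ and integrating by parts yields
\begin{equation}\label{eq:pertinner}
\int \left((|\nabla u_\epsilon|^2 + 2B_\epsilon(u_\epsilon))\dvg\xi - 2\nabla u_\epsilon\cdot D\xi\,\nabla u_\epsilon\right) = 0,
\end{equation}
where $B_\epsilon(s):=\int_0^s \beta_\epsilon(t)\,dt\in[0,\tfrac12]$. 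The $W^{1,2}_{\mathrm{loc}}$-convergence from Lemma \ref{conv} passes the gradient terms to the limit, while $B_{\epsilon_i}(u_{\epsilon_i})$ is bounded in $L^\infty$ and so, along a further subsequence, converges weakly-$*$ to some $g$ with $0\le g\le\tfrac12$. Setting $\chi:=2g$ produces a pair $(u,\chi)$ satisfying \eqref{eq:variationalformula} with $\chi\in[0,1]$, and locally uniform convergence forces $\chi\equiv 1$ on $\{u>0\}$.

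The crucial remaining step is $\chi\in\{0,1\}$ a.e. Since $\beta\ge c(\delta)>0$ on $[\delta,1-\delta]$,
$$\beta_\epsilon(u_\epsilon)\ge \frac{c(\delta)}{\epsilon}\quad\text{on}\quad A_\epsilon:=\{\delta\epsilon < u_\epsilon < (1-\delta)\epsilon\},$$
and the local $L^1$-bound on $\Delta u_\epsilon = \beta_\epsilon(u_\epsilon)$ (guaranteed by the finiteness of the weak-$*$ limit measure $\mu$ in Lemma \ref{conv}) forces $|A_{\epsilon_i}\cap K|\le C(K)\epsilon_i/c(\delta)\to 0$. By monotonicity of $B_\epsilon$ (it is $0$ below $0$ and $\tfrac12$ above $\epsilon$) and a diagonal extraction, $B_{\epsilon_i}(u_{\epsilon_i})$ converges in measure to a function valued in $\{0,\tfrac12\}$; combined with the weak-$*$ $L^\infty$ convergence this forces $g\in\{0,\tfrac12\}$ a.e., so $\chi\in\{0,1\}$ a.e., and $(u,\chi)$ is a variational solution with $\chi\ge\chi_{\{u>0\}}$. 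Theorem \ref{thm:intro}(i) then gives either $u\equiv 0$ or $\chi=\chi_{\{u>0\}}$, so $(u,\chi_{\{u>0\}})$ is a variational solution in both cases, and Theorem \ref{thm:distributional} supplies the countable $\mathcal{H}^{n-1}$-rectifiability, the local finiteness of $\mathcal{H}^{n-1}$-measure, and the formula for $\Delta u$.

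For the bound $\alpha(n)\sqrt{H_x(u;0+)}\le 1$ on $\mathcal{H}^{n-1}$-a.e. $x\in\Sigma^H$, I would invoke the Lederman-Wolanski result \cite{lederman} that the limit $u$ is a viscosity solution of \eqref{fbp}. At an $\mathcal{H}^{n-1}$-a.e. $x\in\Sigma^H$ with $H_x(u;0+)>0$, Lemma \ref{lem:singular1density}(1) yields $v_{x,r}(y)\to\alpha(n)|y_n|$ strongly in $W^{1,2}_{\mathrm{loc}}$; the Caffarelli Lipschitz bound in Lemma \ref{uniforml Lipschitz bound} passes to $u$, so the one-homogeneous rescaling $u_{x,r}(y)=u(x+ry)/r$ is uniformly Lipschitz, and combined with $H_x(u;r)\to H_x(u;0+)>0$ this upgrades to locally uniform convergence $u_{x,r}\to \theta|y_n|$ with $\theta:=\alpha(n)\sqrt{H_x(u;0+)}$. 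Since the viscosity property is preserved under one-homogeneous rescaling and locally uniform limits, $\theta|y_n|$ is itself a viscosity subsolution of \eqref{fbp}; testing against the linear touching function $\phi(y)=(\theta-\delta)y_n$ from below at $0\in\partial\{\theta|y_n|>0\}$ gives $|\nabla\phi(0)|=\theta-\delta\le 1$, and $\delta\to 0$ completes the proof (the case $H_x(u;0+)=0$ is trivial).

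The principal obstacle will be the identification $\chi\in\{0,1\}$ a.e.: $g$ is a priori valued in the full interval $[0,\tfrac12]$, and the argument must exploit both the strict positivity of $\beta$ on $(0,1)$ and the local $L^1$-control on $\Delta u_\epsilon$ from Lemma \ref{conv}, rather than any BV compactness of characteristic functions as was available in the classical-solution compactness of Theorem \ref{classcomp}.
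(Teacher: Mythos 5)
There is a fundamental mismatch between your proposal and the statement you were asked to prove. The statement is Lemma \ref{conv} itself: the basic compactness assertion that, along a subsequence, $u_{\epsilon_i}\to u$ locally uniformly and in $W^{1,2}_{\mathrm{loc}}(B_1)$ to a locally Lipschitz limit, that $u$ is harmonic off $\partial\{u>0\}$, and that $\Delta u_{\epsilon_i}\rightharpoonup\mu$ with $\mu$ a locally finite nonnegative measure supported on $\partial\{u>0\}$ and $\Delta u=\mu$. Your write-up never proves any of these four conclusions; instead it \emph{invokes} Lemma \ref{conv} repeatedly (``the $W^{1,2}_{\mathrm{loc}}$-convergence from Lemma \ref{conv}'', ``the finiteness of the weak-$*$ limit measure $\mu$ in Lemma \ref{conv}'') as an input, and goes on to prove the downstream application, namely Theorem \ref{singpert} / Theorem \ref{thm:perturb_intro} (that $(u,\chi_{\{u>0\}})$ is a variational solution, the identification of $\Delta u$, and the bound $\a(n)\sqrt{H_x(u;0+)}\le 1$). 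As a proof of the target lemma this is circular: every one of its conclusions is assumed rather than derived. (In the paper this lemma is imported from the literature, cited as \cite[Proposition 4.3]{weisszhang}, so there is no in-paper proof to compare with; but a blind attempt still has to establish the statement itself.)

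What a proof of Lemma \ref{conv} actually requires is roughly the following, none of which appears in your proposal: the locally uniform Lipschitz bound of Lemma \ref{uniforml Lipschitz bound} together with Arzel\`a--Ascoli gives 1) and weak $W^{1,2}_{\mathrm{loc}}$ convergence; since $\Delta u_\epsilon=\beta_\epsilon(u_\epsilon)\ge 0$, testing against cutoffs and using the gradient bound gives $\int_K\beta_\epsilon(u_\epsilon)\le C(K)$, hence weak-$*$ convergence of $\Delta u_{\epsilon_i}$ to a locally finite nonnegative measure $\mu$ with $\Delta u=\mu$ in the distributional sense; on any ball compactly contained in $\{u>0\}$ one has $u_{\epsilon_i}>\epsilon_i$ for large $i$ by uniform convergence, so $\Delta u_{\epsilon_i}=0$ there and $u$ is harmonic on $\{u>0\}$, while $u\equiv 0$ on the interior of $\{u=0\}$ forces $\mu=0$ there as well, so $\supp\mu\subset\partial\{u>0\}$; and the upgrade to strong $W^{1,2}_{\mathrm{loc}}$ convergence comes from comparing $\int\eta|\nabla u_{\epsilon_i}|^2=-\int u_{\epsilon_i}\nabla u_{\epsilon_i}\cdot\nabla\eta-\int\eta\, u_{\epsilon_i}\beta_{\epsilon_i}(u_{\epsilon_i})$ with the analogous identity for $u$, noting that the last term is $O(\epsilon_i)$ because $u_{\epsilon_i}\le\epsilon_i$ on the support of $\beta_{\epsilon_i}(u_{\epsilon_i})$ and the mass of $\beta_{\epsilon_i}(u_{\epsilon_i})$ is locally bounded, and that $\int\eta\,u\,d\mu=0$ since $u=0$ on $\supp\mu$. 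Your arguments for $\chi\in\{0,1\}$ and for the viscosity-based bound $\a(n)\sqrt{H_x(u;0+)}\le 1$ may well have independent value for Theorem \ref{singpert}, but they are answering a different question and cannot substitute for the missing compactness proof.
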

Let
$\mathcal{B}_{\epsilon}(z)=\int_0^z\beta_{\epsilon}(s)\> ds$ and $\chi_\epsilon(x)=2\mathcal{B}_\epsilon(u_\epsilon(x))$. It follows that $0\leq\chi_{\epsilon}(x)\leq
1$, 
and the uniform Lipschitz estimate for $u_\epsilon$ implies the relative compactness of
$\chi_{\epsilon}$ in $L_{\mathrm{loc}}^1(B_1)$:
\begin{lemma}[{\cite[Proposition 4.4]{weisszhang}}]
Let $(u_\epsilon)_{\epsilon\in (0,1)}$ be a uniformly bounded family of solutions
of \eqref{epsprob} on $B_1$. Then
$(\chi_{\epsilon})_{\epsilon \in (0,1)}$ is precompact in $L^1(D)$ for each $D\subset\subset B_1$.
\end{lemma}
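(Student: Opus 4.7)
The plan is to pass to a subsequential limit $u$, verify $(u,\chi_{\{u>0\}})$ is a variational solution by passing an exact inner-variation identity for $u_\epsilon$ to the limit, and then invoke Theorem \ref{thm:distributional} for the structural conclusions and the Lederman-Wolanski viscosity characterization for the slope bound.

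Lemma \ref{conv} supplies a subsequence along which $u_{\epsilon_i} \to u$ locally uniformly and strongly in $W^{1,2}_{\mathrm{loc}}$, while the cited precompactness gives $\chi_{\epsilon_i} \to \chi$ in $L^1_{\mathrm{loc}}$. Caffarelli's Lemma \ref{uniforml Lipschitz bound} provides local Lipschitz regularity of $u$, and Lemma \ref{conv}(3) harmonicity on $\{u>0\}$, so $u \in C^0 \cap C^2(\{u>0\})$. Testing the PDE $\Delta u_\epsilon = \beta_\epsilon(u_\epsilon)$ against $\xi \cdot \nabla u_\epsilon$ and using $\beta_\epsilon(u_\epsilon)\nabla u_\epsilon = \nabla \mathcal{B}_\epsilon(u_\epsilon)$ together with $\chi_\epsilon = 2\mathcal{B}_\epsilon(u_\epsilon)$, a standard Pohozaev computation yields
\[
\int\left[(|\nabla u_\epsilon|^2 + \chi_\epsilon)\dvg \xi - 2\nabla u_\epsilon \cdot D\xi\,\nabla u_\epsilon\right] = 0
\]
for every $\xi \in C^{0,1}_c(B_1;\R^n)$---this is exactly \eqref{eq:variationalformula} for $(u_\epsilon,\chi_\epsilon)$. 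The strong $W^{1,2}_{\mathrm{loc}}$ convergence (essential for both quadratic terms) together with the $L^1_{\mathrm{loc}}$ convergence of $\chi_{\epsilon_i}$ pass this identity to $(u,\chi)$ in the limit.

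To identify $\chi = \chi_{\{u>0\}}$ a.e.\ I combine three facts: (a) on $\{u>0\}$, uniform convergence forces $u_{\epsilon_i} \geq \epsilon_i$ eventually so $\chi_{\epsilon_i} \equiv 1$, giving $\chi = 1$; (b) for $K \cc \{u=0\}^\circ$, Lemma \ref{conv}(4) gives $\int_K \beta_{\epsilon_i}(u_{\epsilon_i}) = \int_K \Delta u_{\epsilon_i} \to 0$ since the limiting measure sits on $\partial\{u>0\}$; (c) since $\beta \geq c(\delta) > 0$ on $[\delta,1-\delta]$, the transition set $\{x \in K : u_{\epsilon_i}(x)/\epsilon_i \in (\delta, 1-\delta)\}$ has measure at most $\epsilon_i \int_K \beta_{\epsilon_i}(u_{\epsilon_i})/c(\delta) \to 0$, and an a.e.\ diagonalization forces $\chi_{\epsilon_i} \to 0$ a.e.\ on $K$. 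Together with a standard measure-theoretic argument on $\partial\{u>0\}$ (the uniform $BV$ bound on $\chi_{\epsilon_i}$ supplied by Lemma \ref{lem:BV} applied to $(u_\epsilon,\chi_\epsilon)$ keeps $\chi \in BV$, and combined with (a),(c) this pins down $\chi = \chi_{\{u>0\}}$ a.e.), we conclude $(u,\chi_{\{u>0\}})$ is a variational solution, and Theorem \ref{thm:distributional} delivers the rectifiability, locally finite $\cH^{n-1}$-measure, and the formula for $\Delta u$.

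For the slope bound $\a(n)\sqrt{H_x(u;0+)} \leq 1$, I invoke \cite{lederman}: the limit $u$ is a viscosity solution of \eqref{fbp}, hence in the sense of Section \ref{sec:corollaries}. At $\cH^{n-1}$-a.e.\ $x \in \Sigma^H$, Lemma \ref{lem:singular1density} supplies the crucial locally uniform (and $W^{1,2}_{\mathrm{loc}}$-strong) convergence $u(x+r\cdot)/r \to c|y_n|$ with $c := \a(n)\sqrt{H_x(u;0+)}$. Since viscosity solutions are closed under locally uniform limits, the profile $c|y_n|$ is itself a viscosity solution at the origin; touching from below by $\phi(y) = (c-\delta)y_n$, which satisfies $\phi \leq c|y_n|$ with $\phi(0)=0$, and sending $\delta \downarrow 0$ forces $c \leq 1$. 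The main technical obstacle is step (c) and the identification of $\chi$: unlike in Theorem \ref{classcomp} where $\chi_{\{u_k>0\}}$ is $\{0,1\}$-valued by construction, here $\chi_{\epsilon_i}$ is only $[0,1]$-valued, so identifying its $L^1$-limit as a characteristic function requires both the combustion structure of $\beta_\epsilon$ (for the transition-set bound) and the localization of $\Delta u_\epsilon$ onto the free boundary from Lemma \ref{conv}(4).
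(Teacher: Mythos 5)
Your proposal does not prove the statement at hand. The lemma to be established is purely the precompactness of the family $(\chi_\epsilon)_{\epsilon\in(0,1)}$ in $L^1(D)$ for $D\subset\subset B_1$; instead you have written out an argument for Theorem \ref{singpert} (identification of the limit as a variational solution, the formula for $\Delta u$, and the slope bound), and in the very first paragraph you \emph{assume} the statement in question (``the cited precompactness gives $\chi_{\epsilon_i}\to\chi$ in $L^1_{\mathrm{loc}}$''). As a proof of the lemma this is circular: the $L^1_{\mathrm{loc}}$ compactness is exactly what must be shown, and nothing in your write-up supplies it. A secondary inaccuracy: you invoke Lemma \ref{lem:BV} ``applied to $(u_\epsilon,\chi_\epsilon)$,'' but that lemma is stated for variational solutions with $\chi$ a $\{0,1\}$-valued function, whereas $\chi_\epsilon=2\mathcal{B}_\epsilon(u_\epsilon)$ is only $[0,1]$-valued; one can adapt its proof, but that adaptation is precisely the missing content, not something you may quote.

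The actual proof (as in \cite[Proposition 4.4]{weisszhang}, and as hinted in the paper by the remark that the uniform Lipschitz estimate implies the compactness) is short: since $u_\epsilon$ is smooth, $\nabla\chi_\epsilon=2\beta_\epsilon(u_\epsilon)\nabla u_\epsilon=2\,\Delta u_\epsilon\,\nabla u_\epsilon$, so for $D\subset\subset D'\subset\subset B_1$,
\begin{equation*}
\int_D|\nabla\chi_\epsilon|\;\le\;2\sup_D|\nabla u_\epsilon|\int_D\Delta u_\epsilon\;\le\;2L\int\eta\,\Delta u_\epsilon\;=\;2L\int u_\epsilon\,\Delta\eta\;\le\;C,
\end{equation*}
where $\eta$ is a cutoff with $\eta\equiv1$ on $D$, $\supp\eta\subset D'$, $L$ is the uniform local Lipschitz bound of Lemma \ref{uniforml Lipschitz bound}, and the last inequality uses the uniform $L^\infty$ bound on $u_\epsilon$ together with $\Delta u_\epsilon=\beta_\epsilon(u_\epsilon)\ge0$. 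Combined with $0\le\chi_\epsilon\le1$ (from $\int_0^1\beta=\tfrac12$), this gives a bound on $(\chi_\epsilon)$ in $BV(D)$ uniform in $\epsilon$, and the compact embedding $BV(D)\hookrightarrow L^1(D)$ yields the precompactness. Your observations about the transition set $\{u_{\epsilon_i}/\epsilon_i\in(\delta,1-\delta)\}$ and the localization of $\Delta u_{\epsilon_i}$ are relevant to the later step of identifying the limit $\chi$ (which the paper instead handles via Theorem \ref{thm:chi}), but they play no role in, and cannot substitute for, the compactness assertion itself.
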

Thus we may assume that
$\chi_{\epsilon_i}(x)\rightarrow\chi(x)$ locally in $L^1$ as $i\to\infty$.
\begin{lemma}[{\cite[Proposition 4.5]{weisszhang}}]
Let $(u_\epsilon)_{\epsilon\in (0,1)}$ be a uniformly bounded family of solutions
of \eqref{epsprob} on $B_1$
and let $\chi$ be the limit chosen above. Then
$\chi(x)\in\{0, 1\}$ for a.e. $x\in B_1$.
\end{lemma}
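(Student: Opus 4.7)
The plan is to show $\chi(1-\chi)=0$ a.e.\ on every compact $K\subset\subset B_1$; since $\chi\in[0,1]$ by construction, this forces $\chi\in\{0,1\}$ a.e. The $L^1_{\mathrm{loc}}$ convergence $\chi_{\epsilon_i}\to\chi$ together with the uniform bound $\chi_{\epsilon_i},\chi\in[0,1]$ gives, via the algebraic identity $(\chi-\chi^2)-(\chi_{\epsilon_i}-\chi_{\epsilon_i}^2)=(\chi-\chi_{\epsilon_i})(1-\chi-\chi_{\epsilon_i})$ and the bound $|1-\chi-\chi_{\epsilon_i}|\leq 1$, that $\int_K \chi(1-\chi) = \lim_i \int_K \chi_{\epsilon_i}(1-\chi_{\epsilon_i})$; it thus suffices to show the right-hand side vanishes.

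My main step is a three-region decomposition of $K$ based on where the rescaled variable $u_\epsilon/\epsilon$ lies. Fix auxiliary parameters $0<a<b<1$ and partition $K$ according to $\{u_\epsilon\leq a\epsilon\}$, $\{a\epsilon<u_\epsilon<b\epsilon\}$, and $\{u_\epsilon\geq b\epsilon\}$. On the first set, monotonicity of $\mathcal{B}_\epsilon$ yields $\chi_\epsilon=2\mathcal{B}_\epsilon(u_\epsilon)\leq 2\mathcal{B}(a)$; on the third, symmetrically, $1-\chi_\epsilon\leq 1-2\mathcal{B}(b)$. Hence these two extremal pieces together contribute at most $\bigl(2\mathcal{B}(a)+1-2\mathcal{B}(b)\bigr)|K|$ to $\int_K\chi_\epsilon(1-\chi_\epsilon)$, independently of $\epsilon$.

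For the middle region I use the equation. Continuity and strict positivity of $\beta$ on $(0,1)$ give $c(a,b):=\min_{[a,b]}\beta>0$, so $\beta_\epsilon(u_\epsilon)\geq c(a,b)/\epsilon$ pointwise on $\{a\epsilon<u_\epsilon<b\epsilon\}$. Testing $\Delta u_\epsilon=\beta_\epsilon(u_\epsilon)$ against a cutoff $\eta\in C_c^\infty(B_1)$ with $\eta\equiv 1$ on $K$ and integrating by parts, the uniform Lipschitz bound from Lemma~\ref{uniforml Lipschitz bound} gives $\int_K\beta_\epsilon(u_\epsilon)\leq -\int\nabla\eta\cdot\nabla u_\epsilon\leq C$ independently of $\epsilon$. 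This forces $|\{a\epsilon<u_\epsilon<b\epsilon\}\cap K|\leq C\epsilon/c(a,b)$; combined with the crude bound $\chi_\epsilon(1-\chi_\epsilon)\leq 1/4$ the middle contribution is $O(\epsilon)\to 0$ as $\epsilon\to 0$.

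Assembling the three estimates and first taking $\limsup_{\epsilon\to 0}$, then letting $a\to 0^+$ and $b\to 1^-$ (using $\mathcal{B}(0)=0$ and $\mathcal{B}(1)=\tfrac12$), yields $\lim_i\int_K\chi_{\epsilon_i}(1-\chi_{\epsilon_i})=0$, completing the argument. The main obstacle—and the reason for splitting into three pieces rather than seeking a single pointwise inequality $\chi_\epsilon(1-\chi_\epsilon)\lesssim\epsilon\beta_\epsilon(u_\epsilon)$—is that such a pointwise bound would require the ratio $2\mathcal{B}(r)(1-2\mathcal{B}(r))/\beta(r)$ to be uniformly bounded on $(0,1)$, which is delicate under the mere Lipschitz hypothesis on $\beta$ (since $\beta$ may a priori vanish rapidly at the endpoints). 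The three-region argument sidesteps this by using only $\min_{[a,b]}\beta>0$ on compact subintervals together with the Lipschitz bound on $u_\epsilon$, and pays a price in the two extremal regions that becomes arbitrarily small as $a,b$ approach the endpoints of $(0,1)$.
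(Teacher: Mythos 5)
Your argument is correct. Note that the paper itself gives no proof of this lemma --- it is quoted verbatim from \cite[Proposition 4.5]{weisszhang} --- so there is nothing internal to compare against; your three-region argument is a sound, self-contained proof in the standard style of that literature: the identity $\int_K \chi(1-\chi)=\lim_i\int_K\chi_{\epsilon_i}(1-\chi_{\epsilon_i})$ is justified by the $L^1_{\mathrm{loc}}$ convergence and the uniform bound $0\le\chi_\epsilon\le 1$; the extremal regions are handled correctly via monotonicity of $\mathcal{B}_\epsilon$ (which also covers the case $u_\epsilon\le 0$, where $\mathcal{B}_\epsilon(u_\epsilon)=0$); and the key measure estimate $|\{a\epsilon<u_\epsilon<b\epsilon\}\cap K|\le C\epsilon/c(a,b)$ follows exactly as you say from testing the equation with a nonnegative cutoff $\eta\equiv 1$ on $K$ together with the uniform Lipschitz bound of Lemma \ref{uniforml Lipschitz bound} (one could even avoid the gradient bound by integrating by parts twice and using only $\|u_\epsilon\|_{L^\infty}$). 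Finally, sending $a\to 0^+$, $b\to 1^-$ after the $\epsilon$-limit is legitimate since the limit $\int_K\chi(1-\chi)$ does not depend on $a,b$, and $\mathcal{B}(0)=0$, $\mathcal{B}(1)=\tfrac12$ close the argument; your closing remark about why a single pointwise comparison with $\epsilon\beta_\epsilon(u_\epsilon)$ is not available under the mere Lipschitz hypothesis on $\beta$ is also well taken.
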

\begin{lemma}[{\cite[Lemma 6.1]{weisszhang}}]\label{domvar}
Let $w$ be a solution of
$$
\Delta w=\beta_\epsilon(w)\textrm{ in }
\Omega\subset\mathbb{R}^n\textrm{ and let } \chi_\epsilon(x) = 2\mathcal{B}_\epsilon(w(x)).
$$
Then 
$$
\int_\Omega\left(|\nabla w|^2 \dvg\phi-2\nabla w D\phi\nabla
w+\chi_\epsilon\dvg\phi\right)\> dx=0 
$$ 
for $\phi\in C_c^1(\Omega;\mathbb{R}^n)$.
\end{lemma}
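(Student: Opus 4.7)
The plan is to derive the identity by a Pohozaev-type computation: multiply the pointwise equation $\Delta w = \beta_\epsilon(w)$ by the scalar $\phi \cdot \nabla w$ and integrate over $\Omega$. Since $\beta$ is Lipschitz and $w$ is bounded, standard interior elliptic regularity (bootstrapping from $W^{2,p}$-estimates) gives $w \in C^{2,\alpha}_{\mathrm{loc}}(\Omega)$, so every integration by parts below is classically justified, and the compact support of $\phi$ disposes of boundary terms. By density it suffices to work with $\phi \in C_c^\infty(\Omega;\R^n)$, since all the terms in the identity are continuous in $\phi$ with respect to the $C^1$-norm on any fixed compact set. I expect no genuine obstacle; this is a calculation, not a theorem requiring a new idea, and its only role in the paper is to hand $(u,\chi)$ a domain-variation identity that survives passage to the limit $\epsilon\to 0$.

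For the right-hand side, the key observation is that $\beta_\epsilon(w)\,\nabla w = \nabla \mathcal{B}_\epsilon(w)$ (since $\mathcal{B}_\epsilon' = \beta_\epsilon$), so a single integration by parts gives
\[
\int_\Omega \beta_\epsilon(w)\,(\phi\cdot\nabla w)\, dx \;=\; \int_\Omega \phi\cdot\nabla \mathcal{B}_\epsilon(w)\, dx \;=\; -\int_\Omega \mathcal{B}_\epsilon(w)\,\dvg\phi\, dx \;=\; -\tfrac{1}{2}\int_\Omega \chi_\epsilon\,\dvg\phi\, dx,
\]
where the last equality is the definition $\chi_\epsilon = 2\mathcal{B}_\epsilon(w)$.

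For the left-hand side, I would integrate by parts once to obtain $-\int_\Omega \nabla w \cdot \nabla(\phi\cdot\nabla w)\, dx$ and then expand
\[
\partial_j(\phi_i \partial_i w) \;=\; (\partial_j \phi_i)\,\partial_i w + \phi_i\,\partial_j\partial_i w.
\]
Contracting with $\partial_j w$ and summing, the first piece produces $\nabla w \cdot D\phi\,\nabla w$, while the second assembles into $\phi\cdot\nabla(\tfrac{1}{2}|\nabla w|^2)$. A second integration by parts on the latter, using the compact support of $\phi$, yields $-\tfrac{1}{2}|\nabla w|^2\,\dvg\phi$. Hence
\[
\int_\Omega \Delta w\,(\phi\cdot\nabla w)\, dx \;=\; -\int_\Omega \nabla w \cdot D\phi\,\nabla w\, dx \;+\; \tfrac{1}{2}\int_\Omega |\nabla w|^2\,\dvg\phi\, dx.
\]
Equating this to the right-hand side computed above and multiplying through by $2$ delivers exactly the stated identity. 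Thus the whole lemma is just the standard Pohozaev computation adapted to the Lipschitz potential $\mathcal{B}_\epsilon$, with $\chi_\epsilon$ serving only as the convenient notation $2\mathcal{B}_\epsilon(w)$ that matches the structure of \eqref{eq:variationalformula} after the limit $\epsilon\to 0$.
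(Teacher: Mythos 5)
Your computation is correct and complete: multiplying $\Delta w=\beta_\epsilon(w)$ by $\phi\cdot\nabla w$, using $\nabla\mathcal{B}_\epsilon(w)=\beta_\epsilon(w)\nabla w$, and integrating by parts (justified since elliptic bootstrapping gives $w\in C^{2,\alpha}_{\mathrm{loc}}$) yields exactly the stated identity after multiplying by $2$. The paper itself gives no proof but cites \cite{weisszhang}, and your Pohozaev-type/domain-variation argument is precisely the standard computation behind that cited lemma, so there is nothing to add.
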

\begin{theorem}
Let $(u_\epsilon)_{\epsilon\in (0,1)}$ be a uniformly bounded family of solutions
of \eqref{epsprob} on $B_1$, and let
$u$ be the limit chosen above. Then
$(u, \chi_{\{u > 0\}})$ is a variational solution in $B_1$.
\end{theorem}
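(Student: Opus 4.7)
The plan is to verify the four defining properties of a variational solution for $(u,\chi)$, where $\chi$ is the $L^1_{\mathrm{loc}}$-limit of $\chi_{\epsilon_i}$, and then to invoke Theorem \ref{thm:chi} (which rests on the rectifiability machinery of this paper) to replace $\chi$ by $\chi_{\{u>0\}}$.

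\emph{Verification of properties (1)--(3).} Lemma \ref{uniforml Lipschitz bound} gives a uniform Lipschitz bound $\|\nabla u_{\epsilon_i}\|_{L^\infty(K)}\leq L(K)$ on every compact $K\cc B_1$; passing to the locally uniform limit provided by Lemma \ref{conv} preserves this bound, so $u$ is locally Lipschitz. Since $u$ is harmonic on $\{u>0\}$ by Lemma \ref{conv}, $u\in C^2(\{u>0\})$. The function $\chi$ is Borel measurable, as an a.e.\ limit of measurable functions, and takes values in $\{0,1\}$ a.e. For the inclusion $\chi_{\{u>0\}}\leq \chi$: at any $x$ with $u(x)>0$, continuity gives a ball $B_r(x)$ where $u\geq c>0$; local uniform convergence then forces $u_{\epsilon_i}\geq c/2>\epsilon_i$ on $B_r(x)$ for $i$ large, so $\chi_{\epsilon_i}=2\mathcal{B}_{\epsilon_i}(u_{\epsilon_i})\equiv 1$ there, and the $L^1_{\mathrm{loc}}$-limit $\chi$ equals $1$ almost everywhere on $B_r(x)$.

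\emph{Passing the variational identity to the limit.} Starting from Lemma \ref{domvar},
$$\int_{B_1}\left(|\nabla u_{\epsilon_i}|^2\,\dvg\xi - 2\nabla u_{\epsilon_i}\cdot D\xi\,\nabla u_{\epsilon_i} + \chi_{\epsilon_i}\,\dvg\xi\right)dx = 0$$
for $\xi\in C_c^{0,1}(B_1;\R^n)$, the strong convergence $u_{\epsilon_i}\to u$ in $W^{1,2}_{\mathrm{loc}}(B_1)$ from Lemma \ref{conv}, combined with the uniform Lipschitz bound, handles the two quadratic terms via Cauchy--Schwarz applied to $|\nabla u_{\epsilon_i}|^2-|\nabla u|^2=(\nabla u_{\epsilon_i}+\nabla u)\cdot(\nabla u_{\epsilon_i}-\nabla u)$, while $L^1_{\mathrm{loc}}$-convergence $\chi_{\epsilon_i}\to \chi$ handles the last term. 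This produces \eqref{eq:variationalformula} for $(u,\chi)$, so $(u,\chi)$ is a variational solution.

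\emph{Replacing $\chi$ by $\chi_{\{u>0\}}$.} Since $B_1$ is connected, the dichotomy of Theorem \ref{thm:chi} applies to the variational solution $(u,\chi)$ just produced: either $u\equiv 0$ in $B_1$---in which case $(0,0)=(u,\chi_{\{u>0\}})$ trivially verifies all four axioms---or $\chi=\chi_{\{u>0\}}$ almost everywhere, and the theorem follows. The real \emph{content} of the proof lies in this last step rather than in the soft perturbation passage: a priori the limiting $\chi$ could strictly exceed $\chi_{\{u>0\}}$ on a set of positive measure (this is precisely the \emph{pathological} behavior discussed at the beginning of Section \ref{sec:anreg}), and it is the main results of this paper---the $\cH^{n-1}$-rectifiability of $\Sigma^H$ together with the tangent classification of Lemma \ref{lem:singular1density}, culminating in Theorem \ref{thm:chi}---that rule out this anomaly.
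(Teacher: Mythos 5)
Your proof is correct and follows essentially the same route as the paper: verify that the pair $(u,\chi)$ obtained from the preliminary lemmas (uniform Lipschitz bound, strong $W^{1,2}_{\mathrm{loc}}$ convergence, $L^1_{\mathrm{loc}}$ convergence of $\chi_{\epsilon_i}$, and the $\epsilon$-level domain-variation identity of Lemma~\ref{domvar}) satisfies the definition of a variational solution, and then invoke Theorem~\ref{thm:chi} to replace $\chi$ by $\chi_{\{u>0\}}$. The paper states this more tersely, but the substance is identical, including the observation that the entire content lies in the final application of Theorem~\ref{thm:chi}.
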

\proof
By the strong convergence Lemma \ref{conv},
$(u, \chi)$ is a variational solution in $B_1$.
But then the result follows from Theorem \ref{thm:chi}.
\qed
\section*{Acknowledgments}
We would like to thank Jonas Hirsch for helpful discussions, and the Mittag-Leffler Institute for its hospitality during the program ``Geometric Aspects of Nonlinear Partial Differential Equations." DK was supported by NSF DMS grant 2247096. We are grateful to the anonymous referees for helpful suggestions and comments.

\bibliographystyle{plain}
\bibliography{compactness_bernoulli_020723}

\end{document}